\documentclass[10pt,a4paper]{article}
\usepackage{tikz}
\usepackage[width=14.00cm, height=25.00cm]{geometry}
\usepackage[T1]{fontenc}
\usepackage[utf8]{inputenc}
\usepackage{amsmath,amssymb,amsfonts,amsthm,amscd, bm, mathtools}
\usepackage{enumerate}
\usepackage{todonotes}
\usetikzlibrary{positioning}
\usetikzlibrary{decorations.pathmorphing}
\usetikzlibrary{decorations.text}
\newtheorem{thm}{Theorem}[section]
\newtheorem{prop}[thm]{Proposition}
\newtheorem{lemma}[thm]{Lemma}
\newtheorem{cor}[thm]{Corollary}
\newtheorem{defn}[thm]{Definition}

\newtheorem{conj}[thm]{Conjecture}
\newtheorem{ex}[thm]{Example}

\title{On cycle covers of infinite bipartite graphs}
\author{Leandro Aurichi, Paulo Magalhães Júnior and Lyubomyr Zdomskyy  }

\newcommand{\Addresses}{{
  \bigskip
  \footnotesize
  L. ~Aurichi, \textsc{Instituto de Ci\^encias Matem\'aticas e de Computa\c c\~ao, Universidade de S\~ao Paulo\\
	Avenida Trabalhador s\~ao-carlense, 400,  S\~ao Carlos, SP, 13566-590, Brazil}\par\nopagebreak
  \textit{E-mail address}, L.~Aurichi: \texttt{aurichi@icmc.usp.br}

  \medskip
  P.~Magalhães Jr, \textsc{Instituto de Ci\^encias Matem\'aticas e de Computa\c c\~ao, Universidade de S\~ao Paulo\\
	Avenida Trabalhador s\~ao-carlense, 400,  S\~ao Carlos, SP, 13566-590, Brazil}\par\nopagebreak
  \textit{E-mail address}, P.~Magalhães Jr: \texttt{pjr.mat@usp.br}
  
  \medskip
  L.~Zdomskyy, \textsc{Institut für Diskrete Mathematik und Geometrie, Technische Universität Wien\\
	Wiedner Hauptstrasse 8-10/104, 1040 Wien, Austria}\par\nopagebreak
  \textit{E-mail address}, L.~Zdomskyy: \texttt{lzdomskyy@gmail.com}
 
}}

\date{}

\begin{document}
\maketitle

\begin{abstract}
Given a graph $G$ and a subset $X$ of vertices of $G$ with size at least two, we denote by $N^2_G(X)$ the set of vertices of $G$ that have at least two neighbors in $X$. We say that a bipartite graph $G$ with sides $A$ and $B$ satisfies the double Hall property if for every subset $X$ of vertices of $A$ with size at least 2, $\vert N^2_G(X)\vert \geq \vert X\vert$. Salia conjectured that if $G$ is a bipartite graph that satisfies the double Hall property, then there exists a cycle in $G$ that covers all vertices of $A$. In this work, we study this conjecture restricted to infinite graphs. For this, we use the definition of ends and infinite cycles. It is simple to see that Salia's conjecture is false for infinite graphs in general. Consequently, all our results are partial. Under certain hypothesis it is possible to obtain a collection of pairwise disjoint 2-regular subgraphs that covers $A$. We show that if side $B$ is locally finite and side $A$ is countable, then the conjecture is true. Furthermore, assuming the conjecture holds for finite graphs, we show that it holds for infinite graphs with a restriction on the degree of the vertices of $B$. This result is inspired by the result obtained by Barát, Grzesik,  Jung, Nagy and Pálvölgyi for finite graphs. Finally, we also show that if Salia's conjecture holds for some cases of infinite graphs, then the conjecture about finite graphs presented by Lavrov and Vandenbussche is true.
\end{abstract}

\section{Introduction:}
\paragraph{}

A widely studied problem in graph theory is finding conditions under which a graph or a subset of it can be covered by a cycle, a path, a family of cycles or a family of paths. In this sense, there are several conjectures suggesting sufficient hypotheses to guarantee the existence of such covers, for example the double cycle cover conjecture (\cite{Genest},\cite{surveycdcc}), the faithful cycle cover conjecture (\cite{F-limit},\cite{cyclecocycle}) and Salia's conjecture. In this work, we will discuss the latter. In this article, we will use the definitions and notation as in \cite{diestelbook}.

Let $G(A,B)$ be a bipartite graph with sides $A$ and $B$. We denote by $N_{G}(X)$ the set of neighbors of $X\subset A$. We say that a matching $M\subset E(G)$ is an $A$-perfect matching if for every $v\in A$, $\vert E(v)\cap M\vert =1$, where $E(v)$ is the set of edges in $E(G)$ incident to $v$. One of the best-known theorems in graph theory is Hall's Theorem which states that a bipartite graph $G(A,B)$ has an $A$-perfect matching if and only if, for every $X\subset A$, $\vert N_G(X)\vert \geq \vert X\vert$. Hall's Theorem, while generally false for infinite graphs, remains valid for infinite bipartite graphs where all vertices in set $A$ have finite degree, as demonstrated by Philip Hall in \cite{HALL}. Given $X\subset V(G)$ with $\vert X\vert \geq 2$, we define $N^2_{G}(X)$ as the set of vertices that have at least two neighbors within the set $X$. Since a cycle in $G$ that covers all vertices of $A$ can be viewed as the union of two $A$-perfect matches that is connected, then it is natural to want to somehow duplicate the Hall property to try to obtain a cycle in $G$ that covers all vertices of $A$. For this purpose the double Hall property is defined as follows: 

\begin{defn}
Let $G(A,B)$ be a bipartite graph with sides $A$ and $B$. If $\vert A\vert \geq 2$ and $\vert N^2(X)\vert \geq \vert X\vert$ for all $X\subset A$ of size at least 2, then $G$ satisfies the \textbf{double Hall property}. Briefly, we will denote the graphs with this property by \textbf{dHp}.    
\end{defn}

If a graph satisfies the double Hall property we will call it a dHp graph. In his doctoral thesis \cite{Salia}, Salia presented the following conjecture:   

\begin{conj} [Salia's conjecture \cite{Salia}]\label{con1}
    Let $G$ be a bipartite graph with sides $A$ and $B$ such that $\vert N^2(X)\vert \geq \vert X\vert$ for all $X\subset A$ of size at least 2. For every $X\subset A$, $\vert X\vert\geq 2$, there is a cycle $C_X$ in $G$ such that $V(C_X)\cap A=X$.
\end{conj}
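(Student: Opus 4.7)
The plan is to view the desired cycle as the edge-disjoint union of two $X$-perfect matchings from $X$ into $N^2_G(X)$, establish the existence of such a pair (which yields a cycle cover of $X$), and then show that any cycle cover with more than one component can be merged into one. To realize the first step, I would pass to the induced bipartite subgraph $H := G[X \cup N^2_G(X)]$ and form a ``doubled'' auxiliary graph $H^*$ whose left side is $X \times \{1,2\}$, each copy of $x$ having the same neighborhood in $N^2_G(X)$ as $x$ has in $H$. A matching saturating $X \times \{1,2\}$ gives two edge-disjoint $X$-perfect matchings, and their union is a subgraph in which every vertex of $X$ has degree~$2$; after discarding $B$-vertices of degree~$1$ (or, better, arguing that such vertices can be avoided by an exchange argument), this is a disjoint union of even cycles covering $X$.

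For the second step, suppose the cover contains two distinct components $C_1$ and $C_2$. Choose $x_1 \in V(C_1) \cap A$ and $x_2 \in V(C_2) \cap A$. By \textbf{dHp} applied to $\{x_1,x_2\}$, these vertices share at least two common neighbors in $B$, so one can attempt a local surgery analogous to augmenting-path arguments from matching theory: pick a common neighbor $b$ and reroute the two $2$-regular cycles through $b$ to form a single cycle. Iterating, the number of components strictly decreases, and in the finite case one terminates at a single cycle $C_X$. In the countable case one enumerates the components and performs a transfinite merging, appealing to a compactness-style argument (the finite version applied to ever-larger finite substructures, coupled with the local finiteness of $B$ to extract a limit) so that the final object is a topological cycle in the Freudenthal compactification $|G|$.

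The main obstacle is twofold. First, the Hall condition on the doubled graph $H^*$ requires, in the worst case, $|N_G(Y)\cap N^2_G(X)|\geq 2|Y|$ for $Y\subseteq X$, whereas \textbf{dHp} only delivers $|N^2_G(Y)|\geq |Y|$. Bridging this gap presumably requires a more delicate Hall-defect argument, extracting extra ``slack'' from the fact that each vertex of $N^2_G(X)$ already has two neighbors in $X$; indeed, the authors' comment that ``under certain hypothesis it is possible to obtain a collection of pairwise disjoint $2$-regular subgraphs that covers $A$'' suggests this step is exactly where extra assumptions creep in. Second, the merging step can fail if the two common neighbors of $x_1$ and $x_2$ both lie on $C_1\cup C_2$ in such a way that every local swap splits a cycle instead of joining two; this likely forces a minimum-counterexample argument on a carefully chosen invariant (number of components, then total length of shortest cycle). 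In the infinite setting, the additional subtlety is that ``cycle'' must be interpreted via ends, so each merging step must be shown to produce a topologically closed curve in $|G|$ rather than merely a combinatorial $2$-regular subgraph, explaining why the result can only be expected to hold under restrictive hypotheses on the degrees of $B$-vertices.
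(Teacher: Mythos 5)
The statement you are trying to prove is a \emph{conjecture}: the paper offers no proof of it, and for good reason. For finite graphs it is open (the paper only ever \emph{assumes} it, e.g.\ as a hypothesis in Theorem~\ref{thm4}), and for infinite graphs the paper \emph{refutes} it: Example~\ref{thm3} exhibits a countable bipartite dHp graph $G(A,B)$ with no perfect $A$-matching at all. Since any cycle (finite or topological, in $|G|$) covering $A$ decomposes into two edge-disjoint $A$-perfect matchings, no such cycle can exist there, so your transfinite merging step cannot succeed in general; your appeal to ``local finiteness of $B$'' is an extra hypothesis not granted by the statement (and in the counterexample every vertex of $B$ has infinite degree). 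Any purported proof of the conjecture as stated must therefore be wrong.

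Concerning the individual steps: the first half of your plan (a disjoint union of cycles covering $X$) is essentially Theorem~\ref{thm3.2} of Bar\'at, Grzesik, Jung, Nagy and P\'alv\"olgyi, which is known for finite dHp graphs --- but not via the doubled-graph Hall argument you sketch, precisely because of the gap you yourself identify: dHp gives $|N^2_G(Y)|\geq|Y|$, not the $2|Y|$-type bound the doubled graph would need, and no ``Hall-defect'' repair of this is known to you or supplied by you. The second half (merging components into one cycle) is exactly the open content of the conjecture; observing that $x_1$ and $x_2$ have two common neighbours and hoping a ``local surgery'' works is not an argument, since, as you note, the surgery can split cycles rather than join them, and no invariant is produced that provably decreases. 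The honest conclusion is that you have restated the difficulty, not resolved it; the paper's actual contributions are partial results under added hypotheses (local finiteness of one side, degree restrictions on $B$, finitely many pseudo-isolated vertices), together with the counterexample showing the unrestricted infinite statement is false.
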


Salia's conjecture is inspired by a similar conjecture presented in \cite{zbMATH07303525}. Note that in order to prove the conjecture it is sufficient to prove that any dHp graph $G(A,B)$ contains a cycle covering $A$.

\begin{defn}
    We say that a dHp graph $G(A,B)$ satisfies the \textbf{weak Salia's Conjecture}, if there exists a family of mutually disjoint cycles covering $A$.
\end{defn}

Example \ref{thm3} shows that countable dHp graphs do not even have to satisfy the perfect matching, no word about weak Salia's Conjecture. In this paper we shall present several sufficient conditions for an infinite graph to satisfy the (weak) Salia's Conjecture.

In \cite{barát2023double}, it is proved that for finite graphs it is possible to cover the set $A$ with a 2 -factor and that for graphs under certain restrictions on the degree of the vertices of $B$, Salia's conjecture holds. In this work, we will study Salia's conjecture restricted to infinite graphs. We will need to recall the concepts of infinite cycle and end of a graph.

In a graph $G$, a \textbf{ray} is an one-way infinite path, whose infinite connected subgraphs are called its \textbf{tails}. Similarly, a two-way infinite path is referred to as a \textbf{double-ray}. Over the set $\mathcal{R}(G)$ of rays of $G$, one can define the following equivalence relation: for each $r, s \in \mathcal{R}(G)$, we say that $r$ and $s$ are equivalent, denoted by $r\sim s$, if and only if for every finite set $F\subset V(G)$ we require that $r$ and $s$ have tails in the same connected component of $G\setminus F$. The relation $\sim$ is an equivalence relation, and equivalence classes are called ends. The space $\Omega (G) =\mathcal{R}(G)/\sim$ is called the end space of $G$. The definition of the ends of a graph as we know it today was presented by Halin in \cite{halin}.

In \cite{ciclofundamental}, Diestel and Kühn introduced the concept of an infinite cycle by extending the definition of a cycle. For that, let us define a topology on $\vert G\vert = G\cup \Omega (G)$. We begin by viewing $G$ itself (without ends) as
the point set of a $1$-complex. Then every edge is a copy of the real interval $[0,1]$, and we give it the corresponding metric and topology. For every vertex $v$ we choose as a basis of open neighborhoods the open stars of radius $\frac{1}{n}$.

Now let us define the basic open sets around the ends of $G$. For every finite subset $F\subset V(G)$ and every end $\varepsilon\in \Omega(G)$ there is exactly one component $C$ of $G\setminus F$ that contains a tail of every ray in $\varepsilon$. We denote this component by $C(F,\varepsilon)$. We say that $\varepsilon$ belongs to $C(F,\varepsilon)$. Define $\Omega (F,\varepsilon)=\lbrace \varepsilon'\in \Omega(G): C(F, \varepsilon') = C(F,\varepsilon) \rbrace$. Then, given $\varepsilon\in \Omega(G)$ and finite $F\subset V(G)$ we define the open neighborhood around $\varepsilon$ to be
$$\Omega(F,\varepsilon)\bigcup C(F,\varepsilon)\bigcup E(F,\varepsilon)$$
where $E(F,\varepsilon)$ is the set of all inner points of edges between $F$ and $C(F, \varepsilon)$. For more details on this topology, see \cite{ciclofundamental} and \cite{restrições1}. A set $C\subset \vert G\vert$ is a \textbf{circle} if it is homeomorphic to the unit circle, $\mathbb{S}^1$. Then $C$ contains all edges that contain an inner point that belongs to $C$. Thus, the graph consisting of these edges and their endvertices is the \textbf{cycle} defined by $C$. Every finite cycle in $G$ is also a cycle in this sense. 
With this definition, some double rays are examples of infinite cycles, namely precisely those two half-rays are equivalent with respect to $\sim$. 

In this paper, we extend the study of Salia's conjecture to infinite graphs by obtaining some partial results. The first natural case to study is that of locally finite infinite graphs, but we show in Proposition \ref{prop0} that it is not possible to have such a graph which is dHp. Subsequently, we investigate Salia's conjecture restricted to dHp graphs where the locally finite property is relaxed to only one side of the bipartition. Initially, we consider dHp graphs where the side $A$ is locally finite and impose restrictions on the degrees of the vertices of $B$. These restrictions are inspired by some results over finite graphs in \cite{barát2023double}, where it is assumed that, for every vertex $v$ in $B$, the degree of $v$ in $G$ is either 2 or equal to the $\vert A\vert $. Adapting these ideas to infinite graphs, we obtain the following results:

\begin{thm}
    Let $G(A,B)$ be a bipartite infinite dHp graph with infinite sides $A$ and $B$, where $A$ is locally finite. If for each $v\in B$, either $N(v)$ is finite or $N(v)=A\setminus F_v$ for some finite $F_v\subset A$, then there is a family $\mathcal{C}_X$ of disjoint cycles in $G$ such that $V(\bigcup\mathcal{C}_X)\cap A=X$.   
\end{thm}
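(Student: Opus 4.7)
The proof splits according to whether $X$ is finite or infinite. If $|X|<\infty$, the local finiteness of $A$ implies that $N_G(X)$ is a finite subset of $B$, so the induced subgraph $G':=G[X\cup N_G(X)]$ is finite. A quick verification shows that $G'$ inherits the double Hall property from $G$: for any $Y\subseteq X$ with $|Y|\geq 2$, every $v\in N^2_G(Y)$ has a neighbor in $Y\subseteq X$ and so lies in $N_G(X)$; hence $N^2_{G'}(Y)=N^2_G(Y)$, whose cardinality is at least $|Y|$. The finite 2-factor theorem of Bar\'at, Grzesik, Jung, Nagy and P\'alv\"olgyi from \cite{barát2023double} then provides a 2-regular subgraph of $G'$ covering exactly $X$ on the $A$-side, whose components form the desired family $\mathcal{C}_X$.

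If $|X|$ is infinite, I would exploit the cofinite-neighbor structure of $B$. Partition $B$ into $B^*=\{v\in B:N(v)=A\setminus F_v,\ |F_v|<\infty\}$ and $B^\circ=B\setminus B^*$; note $B^*\neq\emptyset$, since otherwise $G$ would be locally finite on both sides, contradicting Proposition \ref{prop0}. In the main case, $B^*$ infinite and $X$ countable, enumerate $X=\{x_n:n\in\mathbb{Z}\}$ and build a double ray $\ldots v_{-1}x_{-1}v_0x_0v_1x_1\ldots$ greedily: at stage $n$ pick a fresh $v_n\in B^*$ with $\{x_n,x_{n+1}\}\cap F_{v_n}=\emptyset$. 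This is always possible because the admissible set is cofinite in $B^*$ while only finitely many vertices have been used so far. To see that the resulting double ray is an infinite cycle, I verify that its two half-rays lie in a single end of $G$: for any finite $F\subseteq V(G)$, any two $v,v'\in B^*\setminus F$ are joined in $G\setminus F$ by $v-a-v'$ for any $a\in A\setminus(F\cup F_v\cup F_{v'})$, which is nonempty since $A$ is infinite. For uncountable $X$ I would partition $X$ into countable blocks and iterate the construction, keeping disjoint pools of $B^*$-vertices.

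The main obstacle is the case where $B^*$ is finite. Setting $F_0=\bigcup_{v\in B^*}F_v$ (a finite set), the dHp inequality applied to $A\setminus F_0$ forces $B^\circ$ to contain $|A|$-many vertices with at least two neighbors in $A\setminus F_0$. Using this abundance together with the finite 2-factor theorem applied to an exhaustion of $X$ by finite dHp subgraphs $G[X_n\cup N_G(X_n)]$, I would assemble infinitely many disjoint finite cycles covering $X$. The technically delicate step is synchronising these finite 2-factors so that they use pairwise disjoint $B$-vertices and fit together into a single family; this reduces to a Hall-type matching of pairs of $X$-vertices to pairs of common neighbors in $B^\circ$, whose Hall condition requires care because dHp controls $|N^2(Y)|$ for $Y\subseteq A$ but not directly the size of $\bigcup_k N^2(P_k)$ along a pairing $\{P_k\}$ of $Y$.
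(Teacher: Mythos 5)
Your finite-$X$ case is correct and matches how the paper builds its finite approximations (the subgraphs $G_n=G[A_n\cup N^2(A_n)]$ in Lemma \ref{lemma}(a)). The infinite case, however, has a fatal gap. You claim that at each stage of the greedy double-ray construction ``the admissible set is cofinite in $B^*$''. It is the opposite: since $A$ is locally finite, a vertex $x\in A$ has only finitely many neighbours in $B$ altogether, so the set of $v\in B^*$ with $x\notin F_v$ is contained in the finite set $N(x)$; consequently $\lbrace v\in B^*: \lbrace x_n,x_{n+1}\rbrace\cap F_v=\emptyset\rbrace\subseteq N(x_n)\cap N(x_{n+1})$ is \emph{finite}, and when $B^*$ is infinite almost every $v\in B^*$ is inadmissible. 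Concretely, $A=\lbrace a_0,a_1,\dots\rbrace$ with $F_{v_i}=\lbrace a_0,\dots,a_i\rbrace$ gives an infinite $B^*$ compatible with $A$ being locally finite in which $a_0$ has no neighbour in $B^*$ at all, so no double ray through $B^*$ can even reach $a_0$. The greedy construction therefore collapses: the dHp only guarantees two common neighbours for $\lbrace x_n,x_{n+1}\rbrace$, these need not lie in $B^*$, and they may already have been consumed at earlier stages. (Your remaining case, $B^*$ finite, is vacuous --- Lemma \ref{lemma}(b) shows the hypotheses force $B'=B^*$ to be infinite --- but you leave it unresolved anyway.)

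The paper avoids this trap by not prescribing in advance which pairs of $A$-vertices must share a $B$-vertex: it takes an $F$-limit along a nonprincipal ultrafilter of the finite cycle covers supplied by Theorem \ref{thm3.2} on the exhaustion $G_n$, obtaining a subgraph $H$ in which every $A$-vertex has degree $2$, every $B$-vertex has degree at most $2$, and the degree-$1$ defects in $B$ automatically have infinite degree in $G$; it then arranges for $H$ to meet $B'$ infinitely often (Lemma \ref{lemma}(c)) and repairs the defects by an edge-swapping surgery followed by a second $F$-limit (Lemma \ref{lemma}(d)). Some device of this kind --- letting the finite theorem choose the pairings globally and only then correcting locally --- appears unavoidable here, and it is exactly what your proposal is missing.
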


\begin{thm}
Let $G(A,B)$ be a bipartite infinite dHp graph with infinite sides $A$ and $B$, where $A$ is locally finite. If $B$ has only finitely many vertices with finite degree, then there is a collection of pairwise disjoint 2-regular subgraphs in $G$ covering all the vertices of $A$.    
\end{thm}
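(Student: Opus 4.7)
The strategy is to leverage the finite case (proved by Barát, Grzesik, Jung, Nagy, and Pálvölgyi \cite{barát2023double}) via a compactness/transfinite extension argument, exploiting that all but finitely many vertices of $B$ have infinite degree. Set $B_f = \{b \in B : \deg(b) < \infty\}$, which is finite by hypothesis, and $B_\infty = B \setminus B_f$.

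For any finite $A_0 \subseteq A$, consider $G_{A_0} := G[A_0 \cup N_G(A_0)]$; since $A$ is locally finite and $A_0$ is finite, $N_G(A_0)$ is finite, so $G_{A_0}$ is a finite bipartite graph. Moreover $G_{A_0}$ inherits the dHp from $G$: for any $X \subseteq A_0$ with $|X|\geq 2$, every $b$ with two neighbors in $X$ automatically lies in $N_G(A_0)$, hence $N^2_{G_{A_0}}(X) = N^2_G(X)$, whose size is at least $|X|$ by hypothesis. Applying the finite case of \cite{barát2023double} yields a $2$-regular subgraph $R_{A_0}$ of $G_{A_0}$ covering $A_0$.

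I would then well-order $A = \{a_\xi : \xi < \kappa\}$ (with $\kappa = |A|$) and build by transfinite induction an increasing chain of $2$-regular subgraphs $R_\xi \subseteq G$ satisfying $\{a_\eta : \eta \leq \xi\} \subseteq V(R_\xi)$. At limit stages, take unions. At a successor stage $\xi + 1$, if $a_{\xi+1}\in V(R_\xi)$ set $R_{\xi+1}=R_\xi$; otherwise apply the finite construction to a suitable finite set $A_0 \ni a_{\xi+1}$ and merge the resulting structure with $R_\xi$ by augmenting through fresh infinite-degree vertices of $B_\infty$. The finitely many vertices of $B_f$ are absorbed once and for all into the initial finite core, by applying the finite theorem to $G[N_G(B_f) \cup N_G(N_G(B_f))]$, so that subsequent extensions take place entirely within $B_\infty$ and never need to touch $B_f$ again.

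The main obstacle is the merging step: extending $R_\xi$ to include $a_{\xi+1}$ while preserving $2$-regularity. A naive greedy extension can fail, either because two fresh neighbors of $a_{\xi+1}$ would introduce degree-$1$ open ends at $b$-vertices, or because $N(a_{\xi+1}) \cap V(R_\xi)$ blocks direct reuse. The resolution is to exploit the infinite degree of every $b \in B_\infty$: any open end can either be closed by an augmenting alternating path through $B_\infty$ or be extended indefinitely into a double ray (a valid $2$-regular component). A careful bookkeeping scheme schedules the closure of every open end at a later stage; this always succeeds because every open end lies in $B_\infty$, hence has infinitely many unused $A$-neighbors available to resolve the gap. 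In the limit, each connected component of $\bigcup_\xi R_\xi$ is either a finite cycle or a double ray, giving the desired collection of pairwise disjoint $2$-regular subgraphs covering $A$.
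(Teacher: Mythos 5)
There is a genuine gap at the merging step, and it is not a detail that ``careful bookkeeping'' can be expected to fill. Your plan builds an \emph{increasing} chain $R_0\subseteq R_1\subseteq\cdots$ with unions at limits, but an increasing union is structurally incompatible with closing open ends: if a vertex $b\in B$ has degree $1$ in $R_\xi$, the only way to raise it to degree $2$ later without deleting anything is to add an edge $ba$ at some $a\in A$ that still has degree $<2$; yet every $a$ already covered has degree exactly $2$, and for every $a$ not yet covered you obtain its two incident edges by applying the finite theorem to some finite block $G_{A_0}\ni a$, over which you have no control --- nothing forces that application to route one of $a$'s two edges through your waiting vertex $b$. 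So the scheduled closure may simply never become available, no matter how the infinitely many $A$-neighbors of $b$ are enumerated. The fix necessarily involves \emph{removing} previously placed edges and rerouting (swap $uw_u$ out and $vu$ in, making $w_u$ the new open end, and iterate), after which the limit object is no longer the union of the $R_\xi$ and one must separately argue that every vertex's incident edge set stabilizes. This is exactly what the paper's Lemma 2.10 does: part (a) produces a single global subgraph $H$ with all $A$-degrees equal to $2$ and $B$-degrees at most $2$ as an ultrafilter ($F$-)limit of the finite covers, and part (d) performs the rerouting along a set $\overline{A}$ chosen so that each step's new open end again has infinitely many neighbors in $\overline{A}$, followed by another $F$-limit to extract the $2$-regular cover.

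A second, smaller gap: you assert that every open end lies in $B_\infty$ because $B_f$ was ``absorbed'' into an initial core. Absorbing $N_G(B_f)\cup N_G(N_G(B_f))$ does guarantee that later blocks avoid $B_f$, but it does not by itself guarantee that the initial core and the subsequent gluing never leave a degree-$1$ vertex at a finite-degree element of $B$; the paper proves the analogous statement (degree-$1$ vertices of $B\cap V(H)$ have infinite degree in $G$) by a pigeonhole argument inside the ultrafilter, and explicitly remarks that a naive construction can fail here. Your overall architecture (finite theorem on finite induced dHp pieces, then exploit infinite degrees in $B$) is the right one and matches the paper's, but the compactness mechanism and the edge-rerouting lemma are the actual content of the proof and are missing from the proposal.
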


These results are obtained through combinatorial constructions and by using the concept of $F$-limit of cycles (for more details, see \cite{F-limit}). Later, in Section \ref{sec2}, we study the case when the set $B$ is locally finite. With this simple restriction, we obtain the following result:

\begin{lemma}
    Let $G(A,B)$ be a bipartite infinite dHp graph with infinite countable sides $A$ and $B$, where $B$ is locally finite. Then there exists an infinite cycle in $G$ covering all vertices of $A$.
\end{lemma}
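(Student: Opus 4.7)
The plan is to construct the required infinite cycle as a double ray $D$ in $G$ passing through every vertex of $A$, such that its two half-rays converge to the same end of $G$; then $D$ together with this common end is a circle in $|G|$ covering $A$. Enumerate $A=\{a_n:n\in\omega\}$ and build inductively a nested sequence of finite paths $P_0\subset P_1\subset\cdots$ in $G$, each having both endpoints in $A$ and with $\{a_0,\dots,a_n\}\subseteq V(P_n)\cap A$. At stage $n+1$, extend $P_n$ from one of its endpoints $q$ (alternating between the two endpoints) along a finite path, internally disjoint from $V(P_n)\setminus\{q\}$, that passes through $a_{n+1}$ if it is not already in $P_n$ and terminates in $A$. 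The limit $D=\bigcup_n P_n$ is then a double ray with $V(D)\cap A=A$.

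The crucial step is showing the extension at stage $n+1$ is always possible. Write $F_A=(V(P_n)\cap A)\setminus\{q\}$ and $F_B=V(P_n)\cap B$, both finite. Since $B$ is locally finite, $N(F_B)\subset A$, and hence $F'=F_A\cup N(F_B)$, is finite as well. For any two distinct $a,a'\in A\setminus F'$ we have $N_G(a)\cap F_B=\emptyset=N_G(a')\cap F_B$, so the two or more common neighbors of $\{a,a'\}$ guaranteed by the dHp all lie in $B\setminus F_B$; hence $a$ and $a'$ are joined by a length-two path in $G\setminus(V(P_n)\setminus\{q\})$. Consequently the infinite set $A\setminus F'$ is contained in a single connected component $C$ of $G\setminus(V(P_n)\setminus\{q\})$. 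A small amount of bookkeeping during the construction ensures that $q$ admits a neighbor $b\in B\setminus F_B$ possessing at least one further $A$-neighbor in $A\setminus F'$, putting $q$ itself in $C$; the same bookkeeping handles $a_{n+1}$. Inside $C$ one then assembles the desired extension as a short path.

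Finally, to see that $D$ is an infinite cycle we check that its two half-rays determine the same end of $G$. Fix any finite $F^*\subseteq V(G)$; all but finitely many vertices of $D$ lie in $G\setminus F^*$. Picking $A$-vertices $a^+$ in the right tail and $a^-$ in the left tail of $D$, both belonging to $A\setminus((F^*\cap A)\cup N(F^*\cap B))$, the length-two argument above joins them inside $G\setminus F^*$, so both tails lie in the same component of $G\setminus F^*$; hence $D$ together with this common end is the desired circle. The principal obstacle is the extension step: the dHp only yields $|N^2(X)|\ge |X|$ for $X$ as a whole, so both guaranteed common neighbors of a designated pair may lie inside $F_B$. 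The critical role of local finiteness of $B$ is precisely to keep $N(F_B)$ finite, ensuring that the undisturbed subset $A\setminus F'$ is cofinite in $A$ and inherits the full dHp, which is what produces the length-two connections that drive the construction.
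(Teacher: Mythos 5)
Your proposal is correct and follows essentially the same route as the paper: both build the covering double ray as an increasing union of finite paths with both endpoints in $A$, extended alternately at the two ends, using local finiteness of $B$ to discard the finitely many vertices of $N(V(P_n)\cap B)$ so that the common neighbours guaranteed by the dHp for a suitably chosen pair automatically avoid the $B$-vertices already used (your ``bookkeeping'' is in fact unnecessary for exactly this reason). The only substantive difference is that you also verify that the two tails of the resulting double ray lie in the same end, so that it is a circle in $\vert G\vert$ --- a check the paper's proof leaves implicit.
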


Note that, in the previous result, we impose a restriction on the cardinality of the set $A$. When $A$ is uncountable, we can use elementary submodels to decompose the graph $G$ into countable dHp subgraphs. This decomposition allows us to obtain the following result:

\begin{thm}
     Let $G(A,B)$ be a bipartite infinite dHp graph with infinite sides $A$ and $B$, where $B$ is locally finite. Then there exists a collection of pairwise disjoint 2-regular subgraphs in $G$ that covers all vertices of $A$.
\end{thm}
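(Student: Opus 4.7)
The plan is to reduce the uncountable case to the previous lemma by decomposing $G$ into countable dHp ``layers'' via a continuous chain of elementary submodels. Let $\kappa = |A|$, fix a regular cardinal $\theta$ large enough that $G \in H(\theta)$, and build a continuous $\subseteq$-increasing chain $(M_\alpha)_{\alpha < \kappa}$ of countable elementary submodels of $H(\theta)$ with $G \in M_0$, $M_\alpha \in M_{\alpha+1}$, $A \subseteq \bigcup_{\alpha < \kappa} M_\alpha$, and arranged so that at every successor step $A \cap M_{\alpha+1}$ contains $\omega$ new elements of $A$ beyond $A \cap M_\alpha$ (easily done using a fixed enumeration of $A$). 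For $\alpha < \kappa$ set $A_\alpha := A \cap (M_{\alpha+1} \setminus M_\alpha)$ and $B_\alpha := B \cap (M_{\alpha+1} \setminus M_\alpha)$, with the convention $M_{-1} := \emptyset$ so that $\alpha = 0$ captures $A \cap M_0$ and $B \cap M_0$; in this way $A = \bigsqcup_\alpha A_\alpha$ and the $B_\alpha$ are pairwise disjoint subsets of $B$.

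The central claim is that each $G_\alpha := G[A_\alpha \cup B_\alpha]$ is a countable dHp graph with $B_\alpha$ locally finite, so that the previous lemma yields an infinite cycle $C_\alpha \subseteq G_\alpha$ covering $A_\alpha$. Local finiteness and countability are inherited from $G$ and from $M_{\alpha+1}$. For the double Hall property, given a finite $X \subseteq A_\alpha$ with $|X| \geq 2$ we have $X \in M_{\alpha+1}$, and elementarity together with $|N^2_G(X)| \geq |X|$ furnishes an injection $f : X \to N^2_G(X)$ lying inside $M_{\alpha+1}$, so $f(X) \subseteq B \cap M_{\alpha+1}$. The crucial observation is that any $b \in B \cap M_\alpha$ has $N(b)$ finite and definable from $b$, hence $N(b) \in M_\alpha$ and $N(b) \subseteq M_\alpha$, so such a $b$ has no neighbor in $A_\alpha$; this forces $f(X) \subseteq B_\alpha$. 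For countably infinite $X \subseteq A_\alpha$, exhausting by finite subsets shows $N^2_{G_\alpha}(X)$ is infinite, hence of cardinality $\geq |X|$.

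Since the layers $A_\alpha \cup B_\alpha$ are pairwise disjoint by construction, the family $\{C_\alpha : A_\alpha \neq \emptyset\}$ consists of pairwise disjoint 2-regular subgraphs of $G$ whose $A$-vertices cover $\bigsqcup_\alpha A_\alpha = A$, as desired. I expect the main obstacle to be the verification of the double Hall property for $G_\alpha$: the argument rests delicately on the fact that local finiteness of $B$ forces the neighborhoods of any $b \in B \cap M_\alpha$ to remain within $M_\alpha$, so that the double neighbors supplied by elementarity genuinely land in the new layer $B_\alpha$ instead of leaking into earlier stages — this is precisely the place where the hypothesis that $B$ is locally finite does essential work.
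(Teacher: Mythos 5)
Your proof is correct and follows essentially the same strategy as the paper's: decompose $G$ into countable dHp pieces via countable elementary submodels, use the observation that local finiteness of $B$ forces $N(b)\subseteq M$ for every $b\in B\cap M$ (so the pieces cannot interact and each inherits the double Hall property), and apply the countable lemma to each piece. The only differences are bookkeeping --- you use a continuous increasing chain indexed by $\vert A\vert$ and take set-differences as layers, whereas the paper repeatedly deletes the previously used submodels from $G$ and takes a fresh submodel of the remaining graph, indexed by $\vert B\vert$ --- and your verification of the double Hall property for each layer is in fact spelled out more carefully than in the paper.
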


In Section \ref{sec3}, we focus on studying Salia's conjecture in countable graphs, without the restriction that one of the sides is locally finite. At the beginning of the section, we introduce a concept that will serve as a hypothesis for one of our positive results: let $G$ be an infinite graph. We say that a vertex $v \in V(G)$ is \textbf{pseudo-isolated} if there exists a finite subset $F \subset V(G) \setminus \{v\}$ such that $\{w \in V(G\setminus F): d_{G\setminus F}(v,w) = 2\} =\emptyset$.

\begin{thm}
    Let $G(A,B)$ be a bipartite infinite dHp graph with infinite countable sides $A$ and $B$. If the set of vertices of $A$ that are pseudo-isolated in $G$ is finite, then there is a collection of pairwise disjoint 2-regular subgraphs in $G$ that covers all vertices of $A$.
\end{thm}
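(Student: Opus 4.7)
The plan is to construct the desired 2-regular cover $H$ as the union of an increasing chain of finite \emph{partial covers} $H_0\subseteq H_1\subseteq\cdots$, where by a partial cover I mean a finite subgraph of $G$ in which every $B$-vertex has degree $0$ or exactly $2$, every $A$-vertex has degree at most $2$, and every connected component is either a finite cycle or a finite path whose two endpoints lie in $A$. In the limit, the open paths will grow to double-rays while the finite cycles persist, so $H=\bigcup_n H_n$ will be a 2-regular subgraph of $G$ covering all of $A$.

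\textbf{Preprocessing the pseudo-isolated vertices.} Let $P\subseteq A$ be the finite set of pseudo-isolated vertices, each $p\in P$ coming with a finite witness $F_p$, and put $W=P\cup\bigcup_{p\in P}F_p$, which is finite. The key observation is that for any $X\subseteq P$ with $|X|\geq 2$ and any $b\in N^2_G(X)$, $b$ has two neighbors $p,p'\in X\subseteq P$, and the pseudo-isolation of $p$ via $F_p$ forces $b\in F_p$ (otherwise $p'\in V(G\setminus F_p)$ would satisfy $d_{G\setminus F_p}(p,p')=2$, contradicting the choice of $F_p$). Hence $N^2_G(X)\subseteq W\cap B$ for every such $X$, and the finite bipartite subgraph $G^*=G[P\cup(W\cap B)]$ inherits dHp from $G$. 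Applying the finite 2-factor theorem of Bar\'at, Grzesik, Jung, Nagy and P\'alv\"olgyi recalled in the introduction to $G^*$ produces a 2-regular subgraph $H_0\subseteq G^*$ covering $P$. The trivial cases $|P|\leq 1$ are handled directly using dHp on some pair $\{p,w\}$ with $w\in A\setminus P$.

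\textbf{Main induction.} Enumerate $A\setminus P=\{a_n:n\geq 1\}$ and build $H_n\supseteq H_{n-1}$ through two substeps. (a) If $a_n\notin V(H_{n-1})$, apply the non-pseudo-isolation of $a_n$ twice, the second time with forbidden set enlarged by the outputs of the first, to obtain $b_1,w_1,b_2,w_2$ outside $V(H_{n-1})\cup P$ with $a_nb_i,b_iw_i\in E(G)$ and $w_i\in A\setminus P$, and add the path $w_1-b_1-a_n-b_2-w_2$ to the cover. (b) For every current $A$-endpoint $v$ of an open path component, apply the non-pseudo-isolation of $v$ with forbidden set equal to the current vertex set together with $P$, obtaining fresh $b\in B$ and $w\in A\setminus P$ with $vb,bw\in E(G)$, and add the edges $vb,bw$; this promotes $v$ to degree $2$ and produces a new endpoint $w$. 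Note that every endpoint thus created lies in $A\setminus P$, so non-pseudo-isolation is always available.

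\textbf{Verification and main obstacle.} By preprocessing, every $p\in P$ has degree $2$ in $H_0$ and is never revisited. For $a\in A\setminus P$ with $a=a_n$, either $a$ is added in substep (a) at stage $n$ or it was chosen earlier as a fresh $w$; in the latter case $a$ is an endpoint for exactly one stage before being promoted in substep (b) of the next stage. Every endpoint therefore survives for at most one stage, so in the limit no degree-$1$ vertex remains and every open path becomes a double-ray. Consequently $H$ is a disjoint union of finite cycles coming from $H_0$ together with a countable collection of double-rays, proving the theorem. The main obstacle lies in the preprocessing: a naive greedy construction of disjoint cycles through $P$ could be blocked by the scarcity of common neighbors forced by pseudo-isolation, and the fix is precisely the inclusion $N^2_G(X)\subseteq W\cap B$ for $X\subseteq P$, which localises the problem inside the finite dHp graph $G^*$ and lets us invoke the finite 2-factor theorem.
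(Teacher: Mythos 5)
Your overall strategy is sound but genuinely different from the paper's. The paper also begins by covering the finitely many pseudo-isolated vertices via the finite theorem, but it then covers the rest of $A$ by constructing complete \emph{$<$-economical} double rays one at a time: each ray is built greedily inside the current (infinite) remainder graph, and a separate lemma (Lemma \ref{lemma3.1}) is needed to show that deleting such a ray creates no new pseudo-isolated vertices, so that the next ray can be started. Your dovetailed construction --- extending every open path by one step per stage, so that the forbidden set is always just the finite current partial cover together with $P$ --- only ever invokes non-pseudo-isolation of a vertex \emph{in $G$ itself}, and therefore sidesteps the need for any analogue of that lemma. This is a real simplification; the bookkeeping (every endpoint lies in $A\setminus P$, every extension uses fresh vertices so components neither merge nor close up, and every endpoint is promoted within one stage) is correct, so the limit is indeed a disjoint union of finite cycles and double rays covering $A$.

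There is, however, one flawed step in your preprocessing. The claim $N^2_G(X)\subseteq W\cap B$ for $X\subseteq P$ does not follow from your parenthetical argument: deducing the contradiction $d_{G\setminus F_p}(p,p')=2$ requires $p'\notin F_p$ as well as $b\notin F_p$, and nothing rules out having $p'\in F_p$ and $p\in F_{p'}$ simultaneously, in which case a common neighbour $b$ of $p$ and $p'$ need not lie in $W$ at all (indeed $N^2_G(\lbrace p,p'\rbrace)$ can be infinite while $W$ is finite, e.g.\ when the two witness sets each contain the other vertex). Consequently $G^*=G[P\cup(W\cap B)]$ need not inherit the double Hall property. The conclusion you need is still easily available, and is exactly what the paper does: for each of the finitely many $X\subseteq P$ with $|X|\geq 2$ choose $|X|$ vertices of $N^2_G(X)$, let $D$ be the union of these choices, and observe that $G[P\cup D]$ is a finite dHp graph to which Theorem \ref{thm3.2} applies. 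With that local repair your argument goes through.
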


From this result, we can derive Corollary \ref{cor1}, which provides another partial positive answer to Salia's conjecture by imposing restrictions on the degrees of the vertices of $B$, similar to the partial results in Section \ref{sec2}. Afterwards, we present a result showing that Salia's conjecture is false for countable graphs in general:

\begin{thm}
    The double Hall property on infinite bipartite graphs $G(A,B)$ does not imply the existence of a perfect $A$-matching.     
\end{thm}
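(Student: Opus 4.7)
The plan is to exhibit a concrete countable bipartite graph $G(A,B)$ that satisfies the double Hall property but has no $A$-perfect matching. I take $A=\{a_n : n\in\mathbb{N}\}$ and $B=\{b_n : n\in\mathbb{N}\}$, and declare $N(a_0)=N(a_1)=B$, while $N(a_n)=\{b_0,b_1,\dots,b_{n-1}\}$ for every $n\geq 2$. So $a_0$ and $a_1$ are two ``universal'' vertices, and $b_j$ is adjacent to $a_0$, $a_1$, and to every $a_n$ with $n>j$; in particular $N(b_0)=A$.

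To rule out an $A$-perfect matching I would assume one exists, say $f$, and consider its restriction to $\{a_n : n\geq 2\}$. Writing $f(a_n)=b_{g(n)}$ we obtain an injection $g\colon\{2,3,\dots\}\to\mathbb{N}$ with $g(n)<n$. For each $N\geq 2$ the set $\{0,1,\dots,N-1\}\setminus g(\{2,\dots,N\})$ has cardinality exactly $1$ since $|g(\{2,\dots,N\})|=N-1$; denote its unique element by $m_N$. A direct check gives $m_{N+1}\in\{m_N,N\}$ for all $N$, from which it follows that the global complement $\mathbb{N}\setminus g(\{2,3,\dots\})$ contains at most one element (namely the eventual value of $m_N$, provided it stabilizes). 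But then $f(a_0)$ and $f(a_1)$ would need to be two \emph{distinct} elements of this complement, which is impossible.

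To verify the double Hall property I split on $X\cap\{a_0,a_1\}$. If both $a_0,a_1\in X$, every $b_j$ has two neighbors in $X$, so $N^2(X)=B$. If exactly one of them lies in $X$, setting $m=\sup\{n\geq 2 : a_n\in X\}$ yields $b_j\in N^2(X)$ for all $j<m$, whence $|N^2(X)|\geq m\geq|X|$ because $X\setminus\{a_0,a_1\}$ consists of distinct indices from $\{2,3,\dots\}$. If $X\subseteq\{a_n : n\geq 2\}$, then $b_0\in N^2(X)$ automatically since $N(b_0)=A$, and for $j\geq 1$ one has $b_j\in N^2(X)$ iff at least two indices of elements of $X$ exceed $j$; enumerating those indices as $n_1<n_2<\cdots$ gives $|N^2(X)|=n_{|X|-1}$ (or $\aleph_0$ if $X$ is infinite), which is $\geq|X|$ because the $n_i$ are distinct integers $\geq 2$. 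I expect the main obstacle to be this last case, which is the tightest: it is precisely the universality of $b_0$ in $A$ that supplies the one extra witness needed to make $|N^2(X)|$ meet $|X|$, mirroring the phenomenon on the other side where two infinite-degree vertices of $A$ confront at most one unmatched vertex of $B$ and thereby destroy every candidate matching.
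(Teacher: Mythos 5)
Your construction is, up to renaming, exactly the graph of the paper's Example \ref{thm3} (two universal vertices $v_0,v_1$ on the $A$-side and $N(v_j)=\{u_0,\dots,u_{j-1}\}$ for $j\ge 2$), and both the double-Hall verification by cases on $X\cap\{a_0,a_1\}$ and the pigeonhole argument against a perfect $A$-matching follow the same lines as the paper's proof. The argument is correct; only cosmetic looseness remains (the parenthetical ``provided it stabilizes'' is unnecessary, since two distinct elements of $\mathbb{N}\setminus g(\{2,3,\dots\})$ would already contradict the one-element count for large $N$).
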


We conclude Section \ref{sec3} with another partial positive result by imposing a restriction on the degrees of the vertices of $B$.

\begin{thm}
        Let $G(A,B)$ be a bipartite infinite dHp graph with infinite sides $A$ and $B$, and for each $v\in B$ either $deg(v)=2$ or $N(v)=A$. If Salia's conjecture is true for finite graphs, then there exists a cycle in $G$ covering all vertices of $A$.
\end{thm}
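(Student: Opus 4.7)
The plan is to split into two cases according to the size of $B_\infty := \{v \in B : N(v) = A\}$, noting that $B = B_2 \sqcup B_\infty$ with $B_2 := \{v \in B : \deg(v) = 2\}$.

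\emph{Case 1: $B_\infty$ is infinite.} First I would show $G$ is one-ended: for any finite $F \subset V(G)$, picking $v \in B_\infty \setminus F$ (possible since $|B_\infty| = \infty$) exhibits every $a \in A \setminus F$ as a neighbor of $v$ in $G\setminus F$, so all of $A \setminus F$ lies in a single component of $G\setminus F$ and hence every two rays of $G$ are equivalent. Now enumerate $A = \{a_i : i \in \mathbb{Z}\}$ and choose pairwise distinct $v_i \in B_\infty$; the double ray $\cdots a_{-1}\, v_{-1}\, a_0\, v_0\, a_1\, v_1\, a_2 \cdots$ covers $A$, and since both its tails sit in the unique end of $G$ it is a circle in $|G|$.

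\emph{Case 2: $B_\infty$ is finite.} I would exhaust $A$ by letting $X_n = \{a_0, \ldots, a_{n-1}\}$ for an enumeration $A = \{a_n : n < \omega\}$. For each $n \geq 2$, let $G_n \subset G$ be the finite subgraph with $A$-side $X_n$ and $B$-side consisting of all of $B_\infty$ together with, for each pair $\{a, a'\} \subset X_n$, the first $\min(n, m(a,a'))$ vertices $v \in B_2$ with $N(v) = \{a, a'\}$ (in a fixed enumeration), where $m(a,a') := |\{v \in B_2 : N(v) = \{a, a'\}\}|$. A direct computation gives
\[
|N^2_{G_n}(Y)| = |B_\infty| + \sum_{\{a,a'\}\subset Y}\min(n, m(a,a'))
\]
for any $Y \subset X_n$ with $|Y| \geq 2$, and a case split (some pair in $Y$ has $m(a,a') \geq |Y|$, so that term alone is $\geq |Y|$ since $n \geq |Y|$; or all pairs in $Y$ satisfy $m(a,a') < n$, so the sum collapses to $\sum m$ and dHp of $G$ gives $\sum m + |B_\infty| \geq |Y|$) shows $G_n$ is dHp. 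Applying the finite Salia conjecture to $G_n$ produces a cycle $C_n \subset G_n$ covering $X_n$.

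The final step is to extract, from the sequence $(C_n)$, a limit cycle in $G$ covering all of $A$. For each $a \in A$ and $n$ large, the two cycle-edges of $C_n$ at $a$ form an element of $[E_G(a)]^2$; when $E_G(a)$ is finite, pigeonhole plus a diagonal argument yields a subsequence along which this pair stabilizes. When some $a$ has infinite degree in $G$, one would instead take an $F$-limit of cycles (in the sense of~\cite{F-limit}) using a non-principal ultrafilter on $\omega$; this produces a subgraph in which every $a \in A$ has at most two incident edges. The resulting $2$-regular subgraph must then be shown to form a single circle in $|G|$, using that $G$ is one-ended (a further consequence of dHp). The main difficulty of the proof is this limit step: in particular, ensuring that the $F$-limit delivers degree exactly $2$ at every vertex of $A$ and that the connectivity aggregates into a single circle rather than a disjoint union of $2$-regular subgraphs.
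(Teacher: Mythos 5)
Your Case 1 ($B^\infty$ infinite) is correct and is exactly what the paper does (it dismisses this case in one line). The gap is in Case 2, and it is precisely the step you flag as ``the main difficulty'': the construction of the finite dHp subgraphs $G_n$ and the cycles $C_n$ is fine, but neither of the two tools you propose for passing to the limit actually works here. First, an $F$-limit of the $C_n$ is only guaranteed to give degree exactly $2$ at a vertex $a\in A$ when $a$ has \emph{finite} degree (that is the pigeonhole argument in Lemma \ref{lemma}(a), which is why that lemma assumes $A$ locally finite). In the present setting Proposition \ref{prop2.6} forces $A$ to be non-locally finite, so some $a\in A$ has infinitely many degree-$2$ neighbours; the cycles $C_n$ may use a different pair of edges at such an $a$ for every $n$, in which case no edge at $a$ lies in $E(C_n)$ for an ultrafilter-large set of $n$ and $a$ ends up with degree $0$ or $1$ in the limit. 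Second, your plan to show the limit is a single circle rests on the assertion that $G$ is one-ended ``as a further consequence of dHp''; this is unjustified and false in general: the paper's Claim 2 only gives $\vert\Omega(G)\vert\leq\vert B^{\infty}\vert$, and when $\vert B^{\infty}\vert\geq 2$ the graph can have several ends. Even granting a $2$-regular limit, its components could be infinitely many double rays together with finite cycles, and welding them into one circle is exactly where the work lies.

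The paper's route avoids the limit altogether. It fixes a normal spanning tree $T$, proves that $A\cap T^{\leq m}$ is finite for each $m$, that $G$ has at most $\vert B^{\infty}\vert$ ends, that for $v\notin\mathcal{A}$ the set $A_v$ is infinite and eventually lies \emph{below} $v$ in the tree order, and that beyond some level $n_0$ every $A$-vertex sits on a unique infinite branch. It then applies the finite Salia conjecture \emph{once}, to the single truncation $G\upharpoonright T^{<n_1}$ for a sufficiently deep $n_1$, and extends the resulting finite cycle $C$ along each of the finitely many infinite branches by explicit edge surgery (repeatedly rerouting through vertices of $A_v\cap\lfloor v\rfloor$), producing for each branch two rays attached to $C$ that converge to the same end and hence close up into one circle. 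If you want to salvage your scheme, you would need to replace the $F$-limit by something that controls the choice of edge-pairs at infinite-degree vertices of $A$ and simultaneously forces connectivity; the tree-surgery argument is, in effect, the paper's way of doing both at once.
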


\section{Locally finite dHp graphs}\label{sec2}

In what follows we assume that a dHp graph is connected, since the subgraph induced by $A\cup N_{G}(A)$ is always connected. 

\begin{prop}\label{prop0}
    If a bipartite infinite graph $G(A,B)$ is dHp, then it is non-locally finite. 
\end{prop}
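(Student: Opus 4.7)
The plan is to argue by contradiction: assume $G$ is locally finite, infinite, connected, dHp, and derive a contradiction. I will handle the cases $|A|<\infty$ and $|A|=\infty$ separately, and in both cases the key is that the dHp condition applied to pairs $\{u,v\}\subset A$ already forces $|N(u)\cap N(v)|\geq 2$, which is very demanding when degrees are finite.

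First, I would dispose of the case $|A|<\infty$. Local finiteness gives $|E(G)|=\sum_{v\in A}\deg(v)<\infty$, and connectedness (which, as the authors note, we may assume) forces every vertex of $B$ to have degree at least one, so $|E(G)|\geq |B|$. Therefore $|B|<\infty$ and $G$ is finite, contradicting the hypothesis. This step is straightforward and does not use the dHp assumption beyond $|A|\geq 2$.

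The main case is $|A|=\infty$. Fix any $u\in A$. By the dHp applied to each pair $\{u,v\}$ with $v\in A\setminus\{u\}$, one has $|N(u)\cap N(v)|\geq 2$. The idea is to double-count the quantity
\[
S(u)\;=\;\sum_{v\in A\setminus\{u\}}|N(u)\cap N(v)|.
\]
Swapping the order of summation gives $S(u)=\sum_{b\in N(u)}(\deg(b)-1)$, which is finite because $N(u)$ is finite (local finiteness of $u$) and each $\deg(b)$ is finite (local finiteness of $b\in B$). On the other hand, the dHp bound forces $S(u)\geq 2\,|A\setminus\{u\}|=\infty$. This contradiction completes the argument.

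I do not foresee a serious obstacle: the whole proposition is essentially a one-line double-counting argument once one notices that the $|X|=2$ instance of the dHp already says every two vertices of $A$ share at least two common neighbors, which is incompatible with everyone having finite degree when $A$ is infinite. The only subtlety worth stating carefully is the reduction to connected graphs and the disposal of the finite-$A$ case, both of which are routine.
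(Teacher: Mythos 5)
Your proposal is correct and follows essentially the same route as the paper's proof: the finite-$A$ case is dispatched by forcing $B$ (hence some degree in $A$) to be infinite, and the infinite-$A$ case rests on the observation that the $|X|=2$ instance of the dHp gives every pair in $A$ a common neighbour, which is incompatible with all degrees being finite. Your double-counting of $S(u)$ is just a quantitative phrasing of the paper's argument that each $w\in N(v)$ can serve as a common neighbour for only finitely many $v'$, so no new ideas are involved.
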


\begin{proof}
    Suppose $G(A,B)$ is a locally finite dHp infinite graph. If $A$ is finite, then $B$ must be infinite. Thus, since $N(w)\neq \emptyset$ for all $w\in B$ there is $v\in A$ such that $d(v)$ is infinite, which is a contradiction. If $A$ is infinite, let $v\in A$. For each $v'\in A$ with $v\neq v'$, there is at least one $w\in B$ such that $w\in N(v)\cap N(v')$. Since each $w\in B$ has finite degree then there is only finitely many $v'\in A$ with $v'\neq v$ such that $w\in N(v)\cap N(v')$. Thus, $v$ has infinite degree, and this is a contradiction.  
\end{proof}

However, it is possible to assume that one of the sides of the bipartite graph $G(A,B)$ is locally finite. 

\begin{prop}\label{prop2.2}
    Let $G(A,B)$ be a bipartite infinite graph dHp with infinite sides $A$ and $B$. If $A$ is locally finite, then $A$ is countable. Consequently, if in addiction $G(A,B)$ is connected, then $B$ is also countable.
\end{prop}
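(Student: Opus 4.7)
The plan is to argue by contradiction using the $\Delta$-system lemma. Local finiteness of $A$ ensures that each neighborhood $N(v)\subset B$ is finite, so $\{N(v)\}_{v\in A}$ is a family of finite sets indexed by $A$. Assuming $|A|$ is uncountable, the $\Delta$-system lemma yields an uncountable $A'\subset A$ and a finite root $R\subset B$ such that $N(v)\cap N(v')=R$ for all distinct $v,v'\in A'$ (if $v\mapsto N(v)$ fails to be injective, one uses the multi-set version of the lemma, which is exactly what is needed here).

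The double Hall property, applied to any pair $\{v,v'\}\subset A'$, gives $|R|=|N(v)\cap N(v')|=|N^2(\{v,v'\})|\geq 2$. The crucial observation is then that for every finite $X\subset A'$ and every $w\in N^2(X)$, one has $w\in N(v)\cap N(v')$ for some distinct $v,v'\in X\subset A'$, and by the $\Delta$-system property this intersection equals $R$; hence $N^2(X)\subset R$. Choosing $X\subset A'$ with $|X|=|R|+1$ then produces $|N^2(X)|\leq |R|<|X|$, directly contradicting the double Hall property. This establishes countability of $A$.

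For the consequence, once $A$ is countable and each $N(v)$ is finite, $N(A)=\bigcup_{v\in A}N(v)\subset B$ is a countable union of finite sets, hence countable; and connectedness of $G$ together with bipartiteness force every vertex of $B$ to have some neighbor in $A$, so $B\subset N(A)$ and $B$ is countable. I do not foresee a serious obstacle here: the main conceptual step is simply to feed the $\Delta$-system root into the dHp inequality, after which the contradiction is a one-line count. The only mild care required is to use the index-version of the $\Delta$-system lemma so that it applies equally well when many vertices of $A$ happen to share the same finite neighborhood (in that degenerate case the argument becomes trivial, since $|X|$ can be chosen arbitrarily large while $N^2(X)$ remains inside the common neighborhood).
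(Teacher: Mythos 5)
Your proposal is correct and follows essentially the same route as the paper: apply the $\Delta$-system lemma to the finite neighborhoods $\{N(v)\}_{v\in A}$, observe that $N^2(X)$ lands inside the root $R$ for $X$ in the uncountable subfamily, and contradict dHp with $|X|=|R|+1$. Your extra care about the indexed (multi-set) version of the lemma and the explicit argument for countability of $B$ are welcome refinements of details the paper leaves implicit, but they do not constitute a different approach.
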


\begin{proof}
    Suppose by contradiction that $A$ is uncountable. Consider $\mathcal{F}=\lbrace N(v): v\in A \rbrace $ the family of sets of neighbors of each vertex of $A$. By the $\Delta$-system Lemma there is an uncountable $\mathcal{F}'\subset \mathcal{F}$ that forms a $\Delta$-system of root $R$. That is, for all distinct $N(v), N(u)\in \mathcal{F}'$, $N(v)\cap N(u) = R$, for some finite $R\subset B$.

    Let $\vert R \vert = n$. Let $X\in [A]^{n+1}$ be such that if $v\in X$ then $N(v)\in \mathcal{F}'$. Note that $N^2(X)\subset R$. Therefore, $\vert N^2(X)\vert\leq \vert R\vert =n < n+1 = \vert X\vert $, which is a contradiction.
\end{proof}

Since throughout the paper we will always assume that the graphs $G(A,B)$ are connected, by Proposition \ref{prop2.2}, if $A$ is locally finite and $G(A,B)$ is a dHp graph, then $G(A,B)$ is countable.

Unlike the finite case, in which the ideal solution to Salia's conjecture would be given by a cycle, in the case of infinite dHp graphs, it is generally not possible to obtain a solution that is connected. In particular, there are cases in which the only possible solutions must contain infinitely many connected components; as the following example shows.

\begin{ex}\label{ex2.3}
\emph{First consider the bipartite graph $H$ whose sides are $A=\lbrace u_i : i\in\mathbb{N}\rbrace$ and $B=\lbrace v_i : i\in\mathbb{N}\rbrace$. Let us define the edges of the $H$ as follows: for all $i\in\mathbb{N}$ we have $u_iv_j\in E(H)$ for all $0\leq j\leq i+1$, see Figure \ref{fig1}.}

\begin{figure}[ht]
    \centering
\begin{tikzpicture}
  \draw[white] (-0.8,-0.2) circle (0.01) node[above right,red] {\footnotesize $B$};
  \draw[fill=red] (0,0) circle (0.08) node[above right] {\footnotesize };
  \draw[fill=red] (1,0) circle (0.08) node[above right] {\footnotesize };
  \draw[fill=red] (2,0) circle (0.08) node[above right] {\footnotesize };
  \draw[fill=red] (3,0) circle (0.08) node[above right] {\footnotesize };
  \draw[fill=red] (4,0) circle (0.08) node[above right] {\footnotesize };
  \draw[fill=black] (4.3,0) circle (0.01) node[above right] {\footnotesize };
  \draw[fill=black] (4.5,0) circle (0.01) node[above right] {\footnotesize };
  \draw[fill=black] (4.7,0) circle (0.01) node[above right] {\footnotesize };
  
  \draw[white] (-0.8,1.8) circle (0.01) node[above right,blue] {\footnotesize $A$};
  \draw[fill=blue] (0,2) circle (0.08) node[above right] {\footnotesize };
  \draw[fill=blue] (1,2) circle (0.08) node[above right] {\footnotesize };
  \draw[fill=blue] (2,2) circle (0.08) node[above right] {\footnotesize };
  \draw[fill=blue] (3,2) circle (0.08) node[above right] {\footnotesize };
  \draw[fill=blue] (4,2) circle (0.08) node[above right] {\footnotesize };
  \draw[fill=black] (4.3,2) circle (0.01) node[above right] {\footnotesize };
  \draw[fill=black] (4.5,2) circle (0.01) node[above right] {\footnotesize };
  \draw[fill=black] (4.7,2) circle (0.01) node[above right] {\footnotesize };
  
  \draw[-] (0,2) -- (0,0);
  \draw[-] (1,2) -- (0,0);
  \draw[-] (2,2) -- (0,0);
  \draw[-] (3,2) -- (0,0);
  \draw[-] (4,2) -- (0,0);
  \draw[-] (0,2) -- (1,0);
  \draw[-] (1,2) -- (1,0);
  \draw[-] (2,2) -- (1,0);
  \draw[-] (3,2) -- (1,0);
  \draw[-] (4,2) -- (1,0);
  
  \draw[fill=black] (4.3,1) circle (0.01) node[above right] {\footnotesize };
  \draw[fill=black] (4.5,1) circle (0.01) node[above right] {\footnotesize };
  \draw[fill=black] (4.7,1) circle (0.01) node[above right] {\footnotesize };
  
  \draw[-] (1,2) -- (2,0);
  \draw[-] (2,2) -- (2,0);
  \draw[-] (3,2) -- (2,0);
  \draw[-] (4,2) -- (2,0);
  
  \draw[-] (2,2) -- (3,0);
  \draw[-] (3,2) -- (3,0);
  \draw[-] (4,2) -- (3,0);

  \draw[-] (3,2) -- (4,0);
  \draw[-] (4,2) -- (4,0);
  
\end{tikzpicture}
    \caption{$\Gamma$ (graph $H$)}
    \label{fig1}
\end{figure}
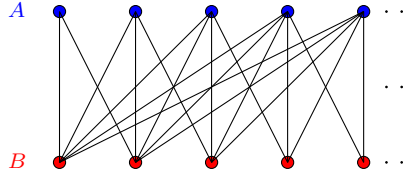

\emph{Let $\lbrace H_i : i\in\mathbb{N}\rbrace$ be a collection of pairwise disjoint copies of the graph $H$ whose sides are denoted by $A_i$ and $B_i$. }

\emph{Let $S=\bigcup_{j\in\mathbb{N}}A_j$.  Let us consider $Y=\lbrace y_i : i\in\mathbb{N}\rbrace$ and define the graph $\Gamma$ as follows: Let $V(\Gamma)=\bigcup_{n\in\mathbb{N}}V(H_n)\sqcup Y$ and let $E(\Gamma)=\bigcup_{n\in\mathbb{N}}E(H_n)\cup\bigcup_{i\in\mathbb{N}}\lbrace y_iv : v\in A_n \text{ with } i-1\leq n\rbrace$, see Figure \ref{fig2}. Note that, $\Gamma$ is a bipartite graph with sides $S=\bigcup_{n\in\mathbb{N}}A_n$ and $R=\bigcup_{n\in\mathbb{N}}B_n\cup Y$.}

\begin{figure}[ht]
    \centering
\begin{tikzpicture}[scale=0.65]

\draw[white] (-1,-0.2) circle (0.01) node[above right,red] {\footnotesize $B_0$};
  \draw[fill=red] (0,0) circle (0.08) node[above right] {\footnotesize };
  \draw[fill=red] (1,0) circle (0.08) node[above right] {\footnotesize };
  \draw[fill=red] (2,0) circle (0.08) node[above right] {\footnotesize };
  \draw[fill=red] (3,0) circle (0.08) node[above right] {\footnotesize };
 
  \draw[fill=black] (3.3,0) circle (0.01) node[above right] {\footnotesize };
  \draw[fill=black] (3.5,0) circle (0.01) node[above right] {\footnotesize };
  \draw[fill=black] (3.7,0) circle (0.01) node[above right] {\footnotesize };
  
    \draw[white] (-1,1) circle (0.01) node[above right,blue] {\footnotesize $H_0$};
  \draw[white] (-1,1.8) circle (0.01) node[above right,blue] {\footnotesize $A_0$};
  \draw[fill=blue] (0,2) circle (0.08) node[above right] {\footnotesize };
  \draw[fill=blue] (1,2) circle (0.08) node[above right] {\footnotesize };
  \draw[fill=blue] (2,2) circle (0.08) node[above right] {\footnotesize };
  \draw[fill=blue] (3,2) circle (0.08) node[above right] {\footnotesize };
  
  \draw[fill=black] (3.3,2) circle (0.01) node[above right] {\footnotesize };
  \draw[fill=black] (3.5,2) circle (0.01) node[above right] {\footnotesize };
  \draw[fill=black] (3.7,2) circle (0.01) node[above right] {\footnotesize };
  
  \draw[-] (0,2) -- (0,0);
  \draw[-] (1,2) -- (0,0);
  \draw[-] (2,2) -- (0,0);
  \draw[-] (3,2) -- (0,0);
  
  \draw[-] (0,2) -- (1,0);
  \draw[-] (1,2) -- (1,0);
  \draw[-] (2,2) -- (1,0);
  \draw[-] (3,2) -- (1,0);

  \draw[fill=black] (3.3,1) circle (0.01) node[above right] {\footnotesize };
  \draw[fill=black] (3.5,1) circle (0.01) node[above right] {\footnotesize };
  \draw[fill=black] (3.7,1) circle (0.01) node[above right] {\footnotesize };
  
  \draw[-] (1,2) -- (2,0);
  \draw[-] (2,2) -- (2,0);
  \draw[-] (3,2) -- (2,0);

  \draw[-] (2,2) -- (3,0);
  \draw[-] (3,2) -- (3,0);

  \draw[white] (4,-0.2) circle (0.01) node[above right,red] {\footnotesize $B_1$};
  \draw[fill=red] (5,0) circle (0.08) node[above right] {\footnotesize };
  \draw[fill=red] (6,0) circle (0.08) node[above right] {\footnotesize };
  \draw[fill=red] (7,0) circle (0.08) node[above right] {\footnotesize };
  \draw[fill=red] (8,0) circle (0.08) node[above right] {\footnotesize };
 
  \draw[fill=black] (8.3,0) circle (0.01) node[above right] {\footnotesize };
  \draw[fill=black] (8.5,0) circle (0.01) node[above right] {\footnotesize };
  \draw[fill=black] (8.7,0) circle (0.01) node[above right] {\footnotesize };
  
    \draw[white] (4,1) circle (0.01) node[above right,blue] {\footnotesize $H_1$};
  \draw[white] (4,1.8) circle (0.01) node[above right,blue] {\footnotesize $A_1$};
  \draw[fill=blue] (5,2) circle (0.08) node[above right] {\footnotesize };
  \draw[fill=blue] (6,2) circle (0.08) node[above right] {\footnotesize };
  \draw[fill=blue] (7,2) circle (0.08) node[above right] {\footnotesize };
  \draw[fill=blue] (8,2) circle (0.08) node[above right] {\footnotesize };
  
  \draw[fill=black] (8.3,2) circle (0.01) node[above right] {\footnotesize };
  \draw[fill=black] (8.5,2) circle (0.01) node[above right] {\footnotesize };
  \draw[fill=black] (8.7,2) circle (0.01) node[above right] {\footnotesize };
  
  \draw[-] (5,2) -- (5,0);
  \draw[-] (6,2) -- (5,0);
  \draw[-] (7,2) -- (5,0);
  \draw[-] (8,2) -- (5,0);
  
  \draw[-] (5,2) -- (6,0);
  \draw[-] (6,2) -- (6,0);
  \draw[-] (7,2) -- (6,0);
  \draw[-] (8,2) -- (6,0);

  \draw[fill=black] (8.3,1) circle (0.01) node[above right] {\footnotesize };
  \draw[fill=black] (8.5,1) circle (0.01) node[above right] {\footnotesize };
  \draw[fill=black] (8.7,1) circle (0.01) node[above right] {\footnotesize };
  
  \draw[-] (6,2) -- (7,0);
  \draw[-] (7,2) -- (7,0);
  \draw[-] (8,2) -- (7,0);

  \draw[-] (7,2) -- (8,0);
  \draw[-] (8,2) -- (8,0);

    \draw[white] (9,-0.2) circle (0.01) node[above right,red] {\footnotesize $B_2$};
  \draw[fill=red] (10,0) circle (0.08) node[above right] {\footnotesize };
  \draw[fill=red] (11,0) circle (0.08) node[above right] {\footnotesize };
  \draw[fill=red] (12,0) circle (0.08) node[above right] {\footnotesize };
  \draw[fill=red] (13,0) circle (0.08) node[above right] {\footnotesize };
 
  \draw[fill=black] (13.3,0) circle (0.01) node[above right] {\footnotesize };
  \draw[fill=black] (13.5,0) circle (0.01) node[above right] {\footnotesize };
  \draw[fill=black] (13.7,0) circle (0.01) node[above right] {\footnotesize };
  
    \draw[white] (9,1) circle (0.01) node[above right,blue] {\footnotesize $H_2$};
  \draw[white] (9,1.8) circle (0.01) node[above right,blue] {\footnotesize $A_2$};
  \draw[fill=blue] (10,2) circle (0.08) node[above right] {\footnotesize };
  \draw[fill=blue] (11,2) circle (0.08) node[above right] {\footnotesize };
  \draw[fill=blue] (12,2) circle (0.08) node[above right] {\footnotesize };
  \draw[fill=blue] (13,2) circle (0.08) node[above right] {\footnotesize };
  
  \draw[fill=black] (13.3,2) circle (0.01) node[above right] {\footnotesize };
  \draw[fill=black] (13.5,2) circle (0.01) node[above right] {\footnotesize };
  \draw[fill=black] (13.7,2) circle (0.01) node[above right] {\footnotesize };
  
  \draw[-] (10,2) -- (10,0);
  \draw[-] (11,2) -- (10,0);
  \draw[-] (12,2) -- (10,0);
  \draw[-] (13,2) -- (10,0);
  
  \draw[-] (10,2) -- (11,0);
  \draw[-] (11,2) -- (11,0);
  \draw[-] (12,2) -- (11,0);
  \draw[-] (13,2) -- (11,0);

  \draw[fill=black] (13.3,1) circle (0.01) node[above right] {\footnotesize };
  \draw[fill=black] (13.5,1) circle (0.01) node[above right] {\footnotesize };
  \draw[fill=black] (13.7,1) circle (0.01) node[above right] {\footnotesize };
  
  \draw[-] (11,2) -- (12,0);
  \draw[-] (12,2) -- (12,0);
  \draw[-] (13,2) -- (12,0);

  \draw[-] (12,2) -- (13,0);
  \draw[-] (13,2) -- (13,0);

      \draw[white] (14,-0.2) circle (0.01) node[above right,red] {\footnotesize $B_3$};
  \draw[fill=red] (15,0) circle (0.08) node[above right] {\footnotesize };
  \draw[fill=red] (16,0) circle (0.08) node[above right] {\footnotesize };
  \draw[fill=red] (17,0) circle (0.08) node[above right] {\footnotesize };
  \draw[fill=red] (18,0) circle (0.08) node[above right] {\footnotesize };
 
  \draw[fill=black] (18.3,0) circle (0.01) node[above right] {\footnotesize };
  \draw[fill=black] (18.5,0) circle (0.01) node[above right] {\footnotesize };
  \draw[fill=black] (18.7,0) circle (0.01) node[above right] {\footnotesize };
  
    \draw[white] (14,1) circle (0.01) node[above right,blue] {\footnotesize $H_3$};
  \draw[white] (14,1.8) circle (0.01) node[above right,blue] {\footnotesize $A_3$};
  \draw[fill=blue] (15,2) circle (0.08) node[above right] {\footnotesize };
  \draw[fill=blue] (16,2) circle (0.08) node[above right] {\footnotesize };
  \draw[fill=blue] (17,2) circle (0.08) node[above right] {\footnotesize };
  \draw[fill=blue] (18,2) circle (0.08) node[above right] {\footnotesize };
  
  \draw[fill=black] (18.3,2) circle (0.01) node[above right] {\footnotesize };
  \draw[fill=black] (18.5,2) circle (0.01) node[above right] {\footnotesize };
  \draw[fill=black] (18.7,2) circle (0.01) node[above right] {\footnotesize };
  
  \draw[-] (15,2) -- (15,0);
  \draw[-] (16,2) -- (15,0);
  \draw[-] (17,2) -- (15,0);
  \draw[-] (18,2) -- (15,0);
  
  \draw[-] (15,2) -- (16,0);
  \draw[-] (16,2) -- (16,0);
  \draw[-] (17,2) -- (16,0);
  \draw[-] (18,2) -- (16,0);

  \draw[fill=black] (18.3,1) circle (0.01) node[above right] {\footnotesize };
  \draw[fill=black] (18.5,1) circle (0.01) node[above right] {\footnotesize };
  \draw[fill=black] (18.7,1) circle (0.01) node[above right] {\footnotesize };
  
  \draw[-] (16,2) -- (17,0);
  \draw[-] (17,2) -- (17,0);
  \draw[-] (18,2) -- (17,0);

  \draw[-] (17,2) -- (18,0);
  \draw[-] (18,2) -- (18,0);

  \draw[fill=red] (1.5,4) circle (0.08) node[above] {\footnotesize ${y_0}$};
  \draw[fill=red] (6.5,4) circle (0.08) node[above] {\footnotesize ${y_1}$};
  \draw[fill=red] (11.5,4) circle (0.08) node[above] {\footnotesize ${y_2}$};
  \draw[fill=red] (16.5,4) circle (0.08) node[above] {\footnotesize ${y_3}$};
   
  \draw[-] (1.5,4) -- (0,2);
  \draw[-] (1.5,4) -- (1,2);
    \draw[-] (1.5,4) -- (2,2);
  \draw[-] (1.5,4) -- (3,2);
   \draw[-] (1.5,4) -- (5,2);
  \draw[-] (1.5,4) -- (6,2);
    \draw[-] (1.5,4) -- (7,2);
  \draw[-] (1.5,4) -- (8,2);
  \draw[-] (1.5,4) -- (10,2);
  \draw[-] (1.5,4) -- (11,2);
    \draw[-] (1.5,4) -- (12,2);
  \draw[-] (1.5,4) -- (13,2);
   \draw[-] (1.5,4) -- (15,2);
  \draw[-] (1.5,4) -- (16,2);
    \draw[-] (1.5,4) -- (17,2);
  \draw[-] (1.5,4) -- (18,2);

  \draw[-] (6.5,4) -- (0,2);
  \draw[-] (6.5,4) -- (1,2);
    \draw[-] (6.5,4) -- (2,2);
  \draw[-] (6.5,4) -- (3,2);
   \draw[-] (6.5,4) -- (5,2);
  \draw[-] (6.5,4) -- (6,2);
    \draw[-] (6.5,4) -- (7,2);
  \draw[-] (6.5,4) -- (8,2);
  \draw[-] (6.5,4) -- (10,2);
  \draw[-] (6.5,4) -- (11,2);
    \draw[-] (6.5,4) -- (12,2);
  \draw[-] (6.5,4) -- (13,2);
   \draw[-] (6.5,4) -- (15,2);
  \draw[-] (6.5,4) -- (16,2);
    \draw[-] (6.5,4) -- (17,2);
  \draw[-] (6.5,4) -- (18,2);

   \draw[-] (11.5,4) -- (5,2);
  \draw[-] (11.5,4) -- (6,2);
    \draw[-] (11.5,4) -- (7,2);
  \draw[-] (11.5,4) -- (8,2);
  \draw[-] (11.5,4) -- (10,2);
  \draw[-] (11.5,4) -- (11,2);
    \draw[-] (11.5,4) -- (12,2);
  \draw[-] (11.5,4) -- (13,2);
   \draw[-] (11.5,4) -- (15,2);
  \draw[-] (11.5,4) -- (16,2);
    \draw[-] (11.5,4) -- (17,2);
  \draw[-] (11.5,4) -- (18,2);

  \draw[-] (16.5,4) -- (10,2);
  \draw[-] (16.5,4) -- (11,2);
    \draw[-] (16.5,4) -- (12,2);
  \draw[-] (16.5,4) -- (13,2);
   \draw[-] (16.5,4) -- (15,2);
  \draw[-] (16.5,4) -- (16,2);
    \draw[-] (16.5,4) -- (17,2);
  \draw[-] (16.5,4) -- (18,2);

  \draw[fill=black] (19.3,1) circle (0.02) node[above right] {\footnotesize };
  \draw[fill=black] (19.5,1) circle (0.02) node[above right] {\footnotesize };
  \draw[fill=black] (19.7,1) circle (0.02) node[above right] {\footnotesize };

  \draw[fill=black] (19.3,3) circle (0.02) node[above right] {\footnotesize };
  \draw[fill=black] (19.5,3) circle (0.02) node[above right] {\footnotesize };
  \draw[fill=black] (19.7,3) circle (0.02) node[above right] {\footnotesize };

    \draw[fill=black] (19.3,4) circle (0.02) node[above right] {\footnotesize };
  \draw[fill=black] (19.5,4) circle (0.02) node[above right] {\footnotesize };
  \draw[fill=black] (19.7,4) circle (0.02) node[above right] {\footnotesize };

\end{tikzpicture}
    \caption{The graph $\Gamma$}
    \label{fig2}
\end{figure}
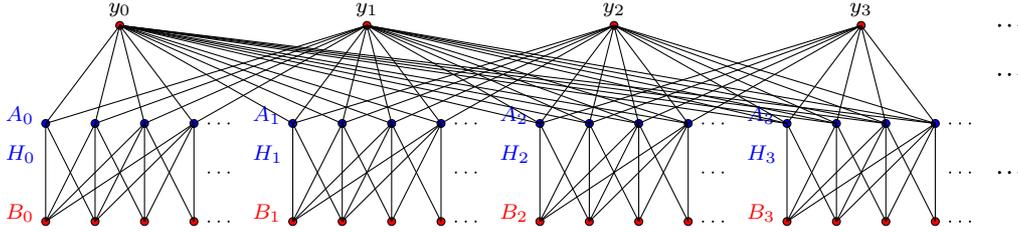
\end{ex}

\begin{prop}\label{prop2.5}
    The graph $\Gamma=\Gamma(S,R)$ constructed in Example \ref{ex2.3} is an infinite countable bipartite dHp graph with side $S$ locally finite. Furthermore, every collection of pairwise disjoint 2-regular subgraphs contained in $\Gamma$ that covers $S$ has infinitely many connected components.
\end{prop}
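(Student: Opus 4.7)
The plan is to verify the four assertions in sequence. That $\Gamma$ is infinite, countable and bipartite is immediate from Example \ref{ex2.3}, and the local finiteness of $S$ follows at once: the vertex $u_i^n \in A_n$ has $i+2$ neighbors in $B_n$ and exactly $n+2$ neighbors in $Y$ (namely $y_0,\dots,y_{n+1}$).

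To verify the double Hall property I would first treat each copy $H_n \cong H$ in isolation: for a finite $X \subseteq A_n$ of size $k \ge 2$ with indices $i_1 < \cdots < i_k$, one checks directly that $|N^2_{H_n}(X)| = i_{k-1} + 2 \ge k$ (using $i_{k-1} \ge k-2$), and for infinite $X$ one has $N^2_{H_n}(X) = B_n$. For general $X \subseteq S$ with $|X| \ge 2$, let $N(X) = \{n_1 < \cdots < n_t\}$ be the copies met by $X$ and set $k_j = |X \cap A_{n_j}|$. Summing the contributions to $N^2_\Gamma(X)$ from the various $B_{n_j}$'s, via dHp of $H_{n_j}$, gives at least $|X| - |\{j : k_j = 1\}|$, while the observation that $y_i \in N^2_\Gamma(X)$ iff $\sum_{n_j \ge i-1} k_j \ge 2$ contributes $n_t + 2$ vertices of $Y$ if $k_t \ge 2$ and $n_{t-1} + 2$ otherwise; combining with $n_t \ge t-1$ (resp.\ $n_{t-1} \ge t-2$) yields $|N^2_\Gamma(X)| \ge |X|$. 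The case $|X|$ infinite is easier, since then $N^2_\Gamma(X) \supseteq Y$.

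The bulk of the proof lies in the last assertion. Suppose, toward a contradiction, that the union of some family of pairwise disjoint 2-regular subgraphs covering $S$ has only finitely many connected components $C_1, \dots, C_L$, each either a finite cycle or a double ray. Since $|A_n| = \aleph_0 = \sum_{j=1}^L |V(C_j) \cap A_n|$, for each $n \in \mathbb{N}$ pigeonhole yields an index $j$ with $|V(C_j) \cap A_n| = \aleph_0$, and such a $C_j$ must be a double ray.

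The decisive structural step is then to show that every double ray $D$ in $\Gamma$ satisfies $|V(D) \cap A_n| = \aleph_0$ for at most two values of $n$. To prove this, call a maximal subpath of $D$ contained in $V(H_n)$ an \emph{episode} in $H_n$; since $B_n$ is adjacent only to $A_n$, consecutive episodes of $D$ lying in distinct copies must be separated by a vertex of $Y$. As $A_n$'s only $Y$-neighbours are $y_0,\dots,y_{n+1}$, the total number of $Y$-edges of $D$ incident to $A_n$ is at most $2(n+2)$, which in turn bounds the number of \emph{finite} episodes of $D$ in $H_n$; each such finite episode contributes only finitely many vertices of $A_n$. Consequently $|V(D) \cap A_n| = \aleph_0$ forces one of the two rays constituting $D$ to be eventually contained in $V(H_n)$, and since $D$ has precisely two rays this can occur for at most two values of $n$. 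Applying this to each of the $L$ components, the set of $n$ for which some $C_j$ satisfies $|V(C_j) \cap A_n| = \aleph_0$ has cardinality at most $2L$, contradicting $|\mathbb{N}| = \aleph_0$. The main obstacle is setting up the episode/boundary bookkeeping cleanly; once that is done, the concluding pigeonhole is immediate.
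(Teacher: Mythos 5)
Your proposal is correct and follows essentially the same route as the paper: the double Hall property is verified by combining the dHp of each copy $H_{n_j}$ (contributing vertices of $B_{n_j}$ to $N^2_\Gamma(X)$) with enough vertices of $Y$ to compensate for the copies met in exactly one vertex, and the final assertion rests, as in the paper, on the fact that a double ray can be eventually contained in at most two of the copies $H_n$, each of which is separated from the rest of $\Gamma$ by finitely many vertices of $Y$. One small correction: for infinite $X$ it is not true in general that $N^2_\Gamma(X)\supseteq Y$ (take $X$ an infinite subset of a single $A_{n_0}$; then $y_i\notin N^2_\Gamma(X)$ for all $i>n_0+1$), but the conclusion survives because in that situation $N^2_\Gamma(X)$ contains the infinite set $B_{n_0}$, so $\vert N^2_\Gamma(X)\vert\geq\aleph_0=\vert X\vert$ in every infinite case.
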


\begin{proof}
   Fix $X\in [S]^{\aleph_0}$ and let $J=\lbrace j\in\mathbb{N}: X\cap A_j\neq \emptyset \rbrace$ and $J_1=\lbrace j\in\mathbb{N}: \vert X\cap A_j\vert = 1 \rbrace$. Three cases are possible:
   \begin{enumerate}
       \item $J_1=\emptyset$.

       Then $N^2_{\Gamma}(X)\supset \bigcup_{j\in J} N^2_{\Gamma}(X\cap A_j)$ and $\vert N^2_{\Gamma}(X\cap A_j)\vert \geq \vert N^2_{H_j}(X\cap A_j)\vert\geq \vert X\cap A_j\vert$ because $H_j$ is dHp and $\vert X\cap A_j\vert \geq 2$.

       \item $\vert J_1\vert$, $J_1=\lbrace j\rbrace$.

       Then, as in Case 1, $\vert N^2_{\Gamma}(X\setminus A_j)\cap (\bigcup_{n\in\mathbb{N}}B_n)\vert\geq \vert X\setminus A_j \vert = \vert X\vert - 1,$
       but also $y_0\in N^2_{\Gamma}(X)$, so $N^2_{\Gamma}(X)\geq (\vert X\vert - 1)+1 = \vert X\vert$.

       \item $J_1=\lbrace j_0, \dots , j_k \rbrace$, where $j_0<j_1< \cdots < j_k$.

       Then in the same way as in Case 1 we can prove that $$\vert N^2_{\Gamma}(X\setminus \bigcup_{s\leq k} A_{j_s})\cap (\bigcup_{n\in\mathbb{N}}B_n)\vert\geq \vert X\setminus \bigcup_{s\leq k} A_{j_s}\vert = \vert X\vert - (k+1).$$ 
       On the other hand, it is easy to check that $$\lbrace y_0, \dots, y_{j_0}, y_{j_0+1}\rbrace \cup \lbrace y_{j_s}, y_{j_s+1} : 1\leq s\leq k-1 \rbrace \subset N^2_{\Gamma}(X\cap \bigcup_{0\leq s\leq k} A_{j_s}),$$ and the number of these $y's$ is at least $k+1$. Thus, $\vert N^2_{\Gamma}(X)\vert \geq \vert X\vert - (k+1)+(k+1)= \vert X\vert$.
   \end{enumerate}
   
    Furthermore, each $H_i$ can be separated from the others by finitely many vertices. Thus, given a double ray $r$ in $S$ and $i\in\mathbb{N}$, either $r$ has a tail entirely contained in $H_i$ or $r$ intersects $H_i$ in only finitely many vertices. Therefore, $S$ cannot be covered by a finite number of finite cycles or double ray. Thus, every collection of pairwise disjoint 2-regular subgraphs contained in $\Gamma$ that covers $S$ must have infinitely many components. 
    
\end{proof}

Due to Proposition \ref{prop2.5} the search for answers to Salia's conjecture begins by looking for coverings given by collection of pairwise disjoint 2-regular subgraphs. The following theorem is one of the main results in \cite{barát2023double} (Theorem 2.6). 

\begin{thm}[Barát, Grzesik, Jung, Nagy, and Pálvölgyi - \cite{barát2023double}]\label{thm2.6}
    Let $G$ be a finite bipartite dHp graph. If $deg(v)\in \lbrace 2, \vert A \vert \rbrace$ for each $v\in B$, then there exists a cycle in $G$ covering all the vertices of $A$.
\end{thm}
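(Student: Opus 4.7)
The plan is to transfer the problem to a multigraph on $A$ and use the universal vertices of $B$ as flexible "glue". Set $B_\star := \{v\in B:\deg(v)=|A|\}$ and $B_2 := B\setminus B_\star$. Define the multigraph $H$ on vertex set $A$ by placing, for each $v\in B_2$, an edge (labelled $v$) between the two elements of $N(v)$. For every $X\subseteq A$ with $|X|\geq 2$ the set $N^2_G(X)$ equals $B_\star\cup\{v\in B_2:N(v)\subseteq X\}$, because any universal vertex already has at least two neighbours in $X$. Hence the double Hall property becomes the edge-density inequality
\[
e_H(X)\ \geq\ |X|-|B_\star|\qquad\text{for every } X\subseteq A,\ |X|\geq 2.
\]

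Observe next that a cycle in $G$ covering $A$ is the same as a cyclic ordering $a_1,\ldots,a_n$ of $A$ together with an assignment of a distinct $B$-vertex $b_i$ to each consecutive pair $(a_i,a_{i+1})$. If $b_i\in B_2$, then $a_i a_{i+1}$ must be the $H$-edge labelled $b_i$; if $b_i\in B_\star$, the pair is arbitrary and we think of it as a \emph{jump}. So the task reduces to finding a cyclic ordering of $A$ with at most $|B_\star|$ jumps, using each $H$-edge at most once.

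First step: I would produce a vertex-disjoint decomposition of $A$ into paths $P_1,\ldots,P_p$ and cycles $C_1,\ldots,C_q$ of $H$ with $p$ minimum, and argue that $p\leq |B_\star|$. The idea is that in a minimum such decomposition the set $Y$ of the $2p$ path-endpoints is "independent" with respect to merging moves: no edge of $H$ joins endpoints of two different paths (else the paths can be concatenated), and any parallel edge between endpoints of the same path would close it into a cycle. A double count of $e_H(Y)$ via the inequality $e_H(Y)\geq 2p-|B_\star|$, set against the very small number of edges that can actually survive inside $Y$, should force $p\leq |B_\star|$. Second step: stitch the $p$ paths and $q$ cycles into a single Hamilton cycle of $A$, using one element of $B_\star$ at each junction where two components meet. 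Cycles $C_j$ are absorbed by temporarily breaking a $B_2$-edge of $C_j$ and rerouting through two universal vertices; by preferring to break cycles that still contain a $B_2$-edge, and by using a single universal vertex per path-junction, one finds that the total consumption of $B_\star$ is at most $p+(\text{something bounded})$, which fits inside $|B_\star|$.

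I expect the real difficulty to lie in the counting of Step 1: proving the sharp bound $p\leq |B_\star|$ for a minimum path/cycle cover of $A$ in $H$. The inequality $e_H(X)\geq |X|-|B_\star|$ is a Tutte--Berge--flavoured deficiency condition for the existence of a "$2$-factor up to $|B_\star|$ missing edges", and extracting the path count from it will likely need either an exchange argument (showing that any edge between path-endpoints in a minimum cover can be used to lower $p$) or a direct induction on $|A|$ exploiting the hereditary nature of the inequality. Once that bound is in hand, bipartiteness of the final cycle is automatic from the alternation $A$--$B$--$A$ in the construction, and the degree hypothesis $\deg(v)\in\{2,|A|\}$ for $v\in B$ is precisely what allows every universal $b_i$ to be used as a jump without any adjacency restriction.
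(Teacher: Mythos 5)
The paper does not prove this statement; it imports it verbatim from Barát, Grzesik, Jung, Nagy and Pálvölgyi, so there is no internal proof to compare against, and your attempt has to stand on its own. Your reduction to the multigraph $H$ on $A$ is correct: for $|X|\ge 2$ one has $|N^2_G(X)|=|B_\star|+e_H(X)$, the double Hall property becomes $e_H(X)\ge |X|-|B_\star|$, and a cycle of $G$ covering $A$ is exactly a cyclic order of $A$ in which all but at most $|B_\star|$ consecutive pairs are realized by pairwise distinct edges of $H$. Your Step 1 also goes through essentially as you describe: in a decomposition of $A$ into vertex-disjoint paths and cycles of $H$ minimizing the number $p$ of paths, the endpoint set $Y$ can span only the edges of the length-one paths, so $2p-|B_\star|\le e_H(Y)\le p$ and $p\le|B_\star|$.

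The genuine gap is in Step 2, not in Step 1. Stitching $p$ paths and $q$ cycles into one cyclic order consumes $p+q$ universal vertices, not $p$: each cycle, once one of its edges is broken, is an additional path to be inserted (replacing one jump by two), and the $B_2$-vertex freed by breaking that edge cannot be reused as a jump because its two neighbours are no longer consecutive in the new order. Your Step 1 gives no control whatsoever on $q$, so the phrase ``at most $p$ plus something bounded, which fits inside $|B_\star|$'' is precisely the unproved assertion. What the stitching actually requires is a cover of $A$ by at most $|B_\star|$ vertex-disjoint paths of $H$ (or a single Hamilton cycle of $H$), and to get that you must minimize over path covers and add the exchange move that handles \emph{closable} paths: if a path $P_i$ admits an unused $H$-edge between its two endpoints, it can be closed into a cycle and re-opened at any of its vertices, so in a minimum path cover no edge joins $V(P_i)$ to an endpoint of another path, and no edge joins two closable components. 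Taking one vertex from each closable path and both endpoints of each non-closable path as the test set $X$ then gives $|X|-e_H(X)\ge\pi$, hence $\pi\le|B_\star|$ for the minimum path-cover number $\pi$, which is the bound you actually need. Without this refinement (or an equivalent device, such as the merging of a minimum family of disjoint cycles), the argument as written does not close.
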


Let $G(A,B)$ be a bipartite graph with sides $A$ and $B$. To extend Theorem \ref{thm2.6} to infinite graphs, we will add hypotheses to $A$ and $B$ when they are infinite. We begin showing that $A$ cannot be locally finite under these circumstances. 

\begin{prop}\label{prop2.6}
    Let $G(A,B)$ be a bipartite infinite dHp graph. If, for each $v\in B$, $N(v)$ is finite or $N(v)=A$, then $A$ is not locally finite.
\end{prop}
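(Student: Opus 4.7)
The plan is to argue by contradiction, assuming $A$ is locally finite, and combine the results already proved (Proposition \ref{prop0} and Proposition \ref{prop2.2}) with a greedy construction of a ``spread out'' finite set $X\subseteq A$ that violates the double Hall property.

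First I would dispose of the trivial cases. If $A$ is finite and locally finite, then each $a\in A$ has finite degree, so $B\subseteq \bigcup_{a\in A}N(a)$ would be finite, contradicting the fact that $G$ is infinite and connected. Hence I may assume $A$ is infinite, and then by Proposition \ref{prop2.2} both $A$ and $B$ are countable. Partition $B$ into $B_\infty=\{v\in B : N(v)=A\}$ and $B^f=\{v\in B : |N(v)|<\infty\}$. If $B_\infty$ were infinite, every $a\in A$ would satisfy $\deg(a)\ge |B_\infty|=\infty$, contradicting local finiteness of $A$; so $n:=|B_\infty|$ is finite. If $n=0$ then the whole graph is locally finite, contradicting Proposition \ref{prop0}. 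Thus $n\ge 1$, and $N^2(X)\cap B^f$ must carry the bulk of the double Hall condition whenever $|X|>n$.

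The key step is then to build a finite $X\subseteq A$ with $|X|=n+1$ whose elements pairwise share no neighbor in $B^f$: for such an $X$, $N^2(X)\subseteq B_\infty$, giving $|N^2(X)|\le n<|X|$ and contradicting dHp. For each $a\in A$ define the ``forbidden partners'' $P(a)=\{a'\in A\setminus\{a\}: \exists v\in B^f,\ a,a'\in N(v)\}$. Because $A$ is locally finite, $a$ lies in only finitely many $N(v)$ with $v\in B^f$, and each such $N(v)$ is finite by hypothesis; hence $P(a)$ is finite. Now pick $a_{i_1}\in A$ arbitrarily, and recursively choose $a_{i_{k+1}}\in A\setminus\big(\{a_{i_1},\dots,a_{i_k}\}\cup\bigcup_{j\le k}P(a_{i_j})\big)$, which is nonempty because $A$ is infinite while the excluded set is finite. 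After $n+1$ steps take $X=\{a_{i_1},\dots,a_{i_{n+1}}\}$; by construction no two elements of $X$ have a common neighbor in $B^f$, completing the contradiction.

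I expect the routine parts to be the case distinction on $|B_\infty|$ and the appeal to Propositions \ref{prop0} and \ref{prop2.2}. The only genuinely non-trivial step is the greedy construction of $X$: the main obstacle is verifying that $|P(a)|<\infty$ for each $a$, which crucially uses \emph{both} halves of the hypothesis on vertices of $B$ (finite neighborhood or full neighborhood, nothing in between) together with local finiteness of $A$. Once $P(a)$ is finite, the recursive choice of $X$ is straightforward.
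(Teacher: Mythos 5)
Your proposal is correct and follows essentially the same route as the paper: split on whether $B^{\infty}=\{v\in B:N(v)=A\}$ is infinite, and in the finite case produce a set $X\subset A$ of size $|B^{\infty}|+1$ whose pairwise common neighbourhoods lie inside $B^{\infty}$, violating the double Hall property. Your greedy construction via the finite sets $P(a)$ just makes explicit the existence claim the paper states in one line (and your separate handling of $|B^{\infty}|=0$ via Proposition \ref{prop0} tidies up the degenerate case where $|X|=1$ would fall outside the dHp hypothesis).
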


\begin{proof}
    Consider $B^{\infty}=\lbrace v\in B : N(v)=A \rbrace $. If $B^{\infty}$ is infinite, then all vertices of $A$ are neighbors of infinitely many vertices of $B$, so $A$ is non-locally finite. If $B^{\infty}$ is finite, let $n=\vert B^{\infty}\vert$. Suppose that $A$ is locally finite. Thus, there exists $X\subset A$ with $\vert X\vert = n+1 $ such that $(N(v)\cap N(w))\setminus B^{\infty}=\emptyset$ for all $v,w \in X$. Now, note that $N^2(X)\subset B^{\infty}$, therefore $\vert N^2(X)\vert =\vert B^{\infty}\vert = n < n+1 =\vert X\vert$, which is a contradiction.
\end{proof}

However, if we weaken the restriction on the of degree of the vertices of $B$, from ``being a neighbor of the entire set $A$'' to ``being a neighbor of  almost all $A$'' then it becomes possible for the existence of bipartite infinite dHp graphs where $A$ is locally finite, these are treated in Theorem \ref{thm1}. Before presenting this result, we will need to see the concept of $F$-limit of cycles that was introduced in \cite{F-limit}.

Intuitively, an $F$-limit $H$ on a graph $G$ is a subgraph such that there exists a sequence of subgraphs $\langle H_n\rangle_{n\in\mathbb{N}}$ of $G$ where a vertex or edge of $G$ is in $H$ if there are many $n\in\mathbb{N}$ to which the vertex or edge belongs to $H_n$. As usual, we will use an ultrafilter to be precise about what we mean by many.

\begin{defn}
    We say that $F\subset \wp(\mathbb{N})$ is a \textbf{filter} over $\mathbb{N}$ if:
     \begin{enumerate}[(a)]
        \item $\emptyset \notin F$;
        \item If $a,b\in F$ then $a\cap b \in F$;
        \item If $a\in F$ and $b\supset a$ then $b\in F$;
      \end{enumerate}
    We say that $F$ is an \textbf{ultrafilter} if $F$ is maximal (that is, if $G\supset F$ is a filter then $G=F$). We also say that an ultrafilter $F$ is \textbf{non-principal} if it does not contain finite subsets of $\mathbb{N}$ (see e.g. \cite{Jechbook}). 
\end{defn}

\begin{defn}
    Let $F$ be a non-principal ultrafilter over $\mathbb{N}$, let $G$ be a graph and let $\langle H_n\rangle_{n\in\mathbb{N}}$ be a sequence of subgraphs of $G$. 
    We say that a subgraph $H\subset G$ is the \textbf{$F$-limit} of $\langle H_n\rangle_{n\in\mathbb{N}}$ if $H=(V_H,E_H)$, where
    $$V_H=\lbrace v\in V(G) : \lbrace n : v\in V(H_n) \rbrace \in F \rbrace ,$$
    $$E_H=\lbrace e\in E(G) : \lbrace n : e\in E(H_n) \rbrace \in F \rbrace.$$
    
    In this case, we say that the sequence $\langle H_n\rangle_{n\in\mathbb{N}}$ is \textbf{convergent} and \textbf{converges} to $H$. In this work we will use $F$-limit of $\langle H_n\rangle_{n\in\mathbb{N}} $ where $H_n$ is a finite family of finite cycles for each $n\in\mathbb{N}$ .
    
\end{defn}    

To prove Lemma \ref{lemma} we will create a sequence of finite subgraphs whose limit is the subgraph we want. In order for each connected component of the desired subgraph in Lemma \ref{lemma} to have the required properties we will use the following result of \cite{barát2023double} (Theorem 3.2):

\begin{thm}[Barát, Grzesik, Jung, Nagy, and Pálvölgyi - \cite{barát2023double}]\label{thm3.2}
    Let $G$ be a bipartite dHp graph with sides $A$ and $B$. For every $X\subset A$, $\vert X\vert \geq 2$, there is a family $\mathcal{C}_X$ of disjoint cycles in $G$ such that $V(\bigcup\mathcal{C}_X)\cap A=X$.
\end{thm}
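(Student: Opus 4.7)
The plan is to produce a $2$-regular subgraph $H$ of $G$ with $V(H) \cap A = X$; since $G$ is bipartite, $H$ is automatically a disjoint union of even cycles and delivers the required family $\mathcal{C}_X$. First I would pass to the induced subgraph $G' := G[X \cup N^2_G(X)]$. Because $Y \subseteq X$ implies $N^2_G(Y) \subseteq N^2_G(X)$, the subgraph $G'$ still satisfies dHp on the side $X$; in particular, applying dHp to pairs $\{x, y\} \subseteq X$ gives $\deg_{G'}(x) \geq 2$ for every $x \in X$, while each $b \in B' := V(G') \cap B$ has, by construction, at least two neighbors in $X$.

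The existence of $H$ is then a bipartite $(f, g)$-factor problem with $f(x) = g(x) = 2$ on $X$, $f(b) = 0$, $g(b) = 2$ on $B'$, supplemented by the parity demand $\deg_H(b) \equiv 0 \pmod{2}$. The bipartite case of the Folkman--Fulkerson theorem reduces the $(f, g)$-factor part to the single cut inequality
$$|E_{G'}(S, B' \setminus T)| \ \geq\ 2\bigl(|S| - |T|\bigr) \qquad \text{for all } S \subseteq X,\ T \subseteq B'.$$
To verify it, observe that each $b \in N^2_G(S) \setminus T$ contributes at least two edges to the left-hand side, so
$$|E_{G'}(S, B' \setminus T)| \ \geq\ 2\bigl|N^2_G(S) \setminus T\bigr| \ \geq\ 2\bigl(|N^2_G(S)| - |T|\bigr) \ \geq\ 2\bigl(|S| - |T|\bigr),$$
the last inequality being dHp when $|S| \geq 2$; the boundary case $|S| \leq 1$ collapses to the degree bound noted above.

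The step I expect to be the main obstacle is the parity condition, namely forcing $\deg_H(b) \in \{0, 2\}$ and not merely $\{0, 1, 2\}$. A generic factor supplied by the cut inequality above may carry a (necessarily even-sized) set $B_1 \subseteq B'$ of ``bad'' vertices with $\deg_H(b) = 1$. I would pair these vertices up and exchange edges along an alternating trail in $G'$ between each pair, which preserves $\deg_H(x) = 2$ for every $x \in X$ while shrinking $|B_1|$ by two at a time; the slack already present in the cut inequality is what guarantees that such a trail can be found. This is essentially the parity refinement of Folkman--Fulkerson, and an argument of exactly this flavour is what Bar\'at, Grzesik, Jung, Nagy and P\'alv\"olgyi carry out in \cite{barát2023double}.
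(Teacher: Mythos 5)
First, a point of reference: the paper does not prove this statement at all. It is quoted verbatim as a result of \cite{barát2023double} and used as a black box (for the finite graphs $G_n$ in the proof of Lemma \ref{lemma}(a) and for the finite graph $H$ in Theorem \ref{thm3.1}), so there is no in-paper proof to compare against; note also that the intended scope is finite graphs or finite $X$, since Example \ref{thm3} shows that for infinite graphs dHp does not even yield a perfect $A$-matching. That said, the first half of your argument is sound: passing to $G'=G[X\cup N^2_G(X)]$ preserves dHp for subsets of $X$, every $x\in X$ has $\deg_{G'}(x)\geq 2$, the cut inequality you write is exactly the max-flow/min-cut feasibility condition for a subgraph $H_0$ with $\deg_{H_0}(x)=2$ on $X$ and $\deg_{H_0}(b)\leq 2$ on $B'$, and your verification of it from dHp is correct.

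The gap is the parity step, and it is not a technicality to be outsourced: it is the entire content of the theorem. The cut inequality, even together with $\deg_{G'}(x)\geq 2$, does not imply the parity-corrected factor exists, so ``the slack already present in the cut inequality'' cannot be what produces the alternating trail. Concretely, the path $b_1x_1b_2x_2b_3$ with $X=\{x_1,x_2\}$ and $B'=\{b_1,b_2,b_3\}$ satisfies your inequality (with equality for $S=X$, $T=\emptyset$) and has $\deg(x_i)=2$, yet its unique subgraph in which both $x_i$ have degree $2$ is the path itself, which has two degree-one vertices of $B'$ and contains no cycle; this graph merely fails dHp because $N^2(\{x_1,x_2\})=\{b_2\}$. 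Hence any correct exchange argument must re-invoke the double Hall property on sets of size at least $2$ both to find the trail joining two bad vertices and to show the process terminates, and that is precisely what is missing. Put differently, what you actually need is the parity $(g,f)$-factor theorem with degree lists $\{2\}$ on $X$ and $\{0,2\}$ on $B'$, whose deficiency condition is strictly stronger than the cut inequality you verified; deriving that condition from dHp is the real work, and the proposal defers exactly the step where the statement could fail.
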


Consider $B'=\lbrace v\in B : N(v)=A\setminus F_v \text{ for some } F_v\subset [A]^{<\aleph_0}\rbrace $, where $[A]^{<\aleph_0}$ is the set of all finite subsets of $A$.

\begin{lemma}\label{lemma}
     Let $G(A,B)$ be a bipartite infinite dHp graph with infinite sides $A$ and $B$, where $A$ is locally finite. The following statements are true:

     \vspace{0.2cm}
     
     \hspace{-0.5cm}(a) There is a subgraph $H$ of $G$ which covers all vertices of $A$, each vertex of $V(H)\cap A$ has degree equal to $2$ and each vertex of $V(H)\cap B$ has degree less than or equal to $2$. Furthermore, the vertices of $V(H)\cap B$ that have degree equal to $1$ in $H$ have infinite degree in $G$.

     \vspace{0.2cm}

     \hspace{-0.5cm}(b) If for each $v\in B$ either $N(v)$ is finite or there exists a finite set $ F_v\subset A$ such that $N(v)=A\setminus F_v$, then $B'$ must be infinite. 

     \vspace{0.2cm}
     
     \hspace{-0.5cm}(c) Under the assumptions of (b) there exists a subgraph $H$ of $G$ with properties stated in (a), which in addition contains infinitely many vertices of $B'$.
     
     \vspace{0.2cm}
     
     \hspace{-0.5cm}(d)  Let $H$ be the subgraph from item (a) and let $S\subset \lbrace v\in B\cap V(H): deg_H(v) =1 \rbrace$. If there is an infinite  $\overline{A}\subset A$ such that for each $u\in \overline{A}$ there exists $w_u\in B$ such that $uw_u\in E(H)$ and $N_G(w_u)\cap \overline{A}$ is infinite, and for each $v\in S$ we have $N_G(v)\cap\overline{A}$ is infinite, then there is a collection of pairwise disjoint 2-regular subgraphs in $G$ covering all the vertices of $A$.
\end{lemma}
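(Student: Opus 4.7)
The overall approach is to modify $H$ by an infinite sequence of edge-swap operations so that every loose end (that is, every degree-$1$ vertex of $H$, which by (a) necessarily lies in $B$) becomes degree $2$, while the degree of every $A$-vertex is preserved at exactly $2$. The resulting subgraph will be $2$-regular, and its connected components will constitute the desired collection. The argument leans on countability of $G$ (Proposition \ref{prop2.2}, since $A$ is locally finite) together with the infinite neighborhoods supplied by the hypothesis.

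The central building block is the construction of a single \emph{swap chain}. Given a loose end $v\in S$, I would inductively choose an injective sequence of distinct vertices $v, u_1, w_{u_1}, u_2, w_{u_2},\ldots$ with $u_k\in \overline{A}$ and $w_{u_k}\in B$ the $H$-neighbor of $u_k$ guaranteed by the hypothesis, such that $u_1\in N_G(v)\cap\overline{A}$ and $u_{k+1}\in N_G(w_{u_k})\cap\overline{A}$ for every $k\geq 1$. Since $N_G(v)\cap\overline{A}$ and each $N_G(w_{u_k})\cap\overline{A}$ are infinite, at each step such a fresh choice (disjoint from any prescribed countable exclusion set) is always available. The associated \emph{swap} consists of adding to the current subgraph the edges $vu_1, w_{u_1}u_2, w_{u_2}u_3,\ldots$ and deleting the edges $u_1w_{u_1}, u_2w_{u_2},\ldots$. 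A direct degree count shows the following. The degree of $v$ rises from $1$ to $2$; each $u_k\in A$ loses the edge $u_kw_{u_k}$ and gains the edge $w_{u_{k-1}}u_k$ (writing $w_{u_0}=v$), so retains degree $2$; each $w_{u_k}$ that had degree $2$ in $H$ loses one edge and gains one, so retains degree $2$; and if some $w_{u_k}$ happens to be itself a loose end, it is simultaneously repaired by the new edge $w_{u_k}u_{k+1}$.

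I would then enumerate $S=\{v_0,v_1,\ldots\}$ and at stage $n$ process $v_n$: if $v_n$ has already been absorbed into some earlier chain as one of its $w_{u_k}$ (and so already has degree $2$), skip it; otherwise run a fresh swap chain from $v_n$ using vertices disjoint from everything used in previous stages. Because at each stage only countably many vertices have been locked in while every relevant neighborhood remains infinite, the construction proceeds without obstruction. Let $H^*$ denote the limit subgraph after performing all the swaps. Every $A$-vertex retains degree $2$, every loose end in $S$ has been repaired, and no new loose ends are created; hence $H^*$ is $2$-regular, and its connected components are pairwise disjoint $2$-regular subgraphs of $G$ covering all of $A$, since $A\subseteq V(H)\subseteq V(H^*)$.

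The main obstacle is the bookkeeping: I must ensure that infinitely many swap chains are globally consistent — that newly added edges of one chain never clash with deletions performed by another, that each vertex of $\overline{A}$ is used by at most one chain, and that any degree-$1$ vertex of $H$ outside $S$ (should such exist in the application) is absorbed as a $w_{u_k}$ along some running chain rather than left behind. The infinitude of $\overline{A}$ and of each $N_G(w_{u_k})\cap \overline{A}$ after only finitely many commitments is exactly what makes such a careful back-and-forth schedule feasible, and it is this scheduling that I expect to be the technically heaviest part of the proof.
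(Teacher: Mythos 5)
Your proposal addresses only item (d); items (a), (b) and (c) are not touched at all, and (a) is where the substantive construction lives. The paper builds $H$ by applying Theorem \ref{thm3.2} to the finite induced dHp subgraphs $G_n=[A_n\cup N^2(A_n)]$ (where $A_n$ is the first $n+1$ vertices of $A$), taking the $F$-limit of the resulting cycle collections along a non-principal ultrafilter, and using local finiteness of $A$ plus the ultrafilter's closure under finite partitions to show every $A$-vertex keeps degree exactly $2$ and every degree-$1$ vertex of $B\cap V(H)$ must have infinite degree in $G$. Item (b) is a separate $\Delta$-system-flavoured counting argument and (c) is a separate edge-exchange argument; none of this can be absorbed into the swap machinery you describe, so as it stands the proposal proves at most a quarter of the lemma.

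Within (d) your idea (repair each loose end by an alternating add/delete chain through $\overline{A}$ and the $w_u$'s) is essentially the paper's, but the schedule you propose does not work. You run a \emph{complete infinite} chain for $v_0$ before starting $v_1$, and then claim a fresh choice "disjoint from any prescribed countable exclusion set" is always available because the relevant neighbourhoods are infinite. That is false: the neighbourhoods $N_G(w)\cap\overline{A}$ are only countably infinite, and after one completed chain the exclusion set is already infinite, so $N_G(w)\cap\overline{A}$ minus the used vertices can be empty. The paper avoids this by performing exactly one swap per stage (add $v_ku_k$, delete $u_kw_{u_k}$), so that at every step only \emph{finitely} many vertices are excluded; the single new deficient vertex $w_{u_k}$ is queued (via the odd/even indexing of the $v_n$'s) for repair at a later stage, and the 2-regular limit is extracted as an $F$-limit of the finite-modification sequence $\langle H_n\rangle$. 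A further slip: you assert that a $w_{u_k}$ which is itself a loose end is "repaired" by the chain, but your swap deletes $u_kw_{u_k}$ and adds $w_{u_k}u_{k+1}$, leaving its degree at $1$; in the one-step scheme such a vertex instead drops to degree $0$ and is simply discarded from $H$, which is the correct resolution since it lies in $B$ and need not be covered.
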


\begin{proof}
    \textbf{(a):} Consider $A=\lbrace u_i : i\in\mathbb{N}\rbrace$, $A_n=\lbrace u_0, \cdots , u_n\rbrace$, $B_n= N^2(A_n)$ and $G_n=[ A_n\cup B_n]$ the subgraph induced by $A_n\cup B_n$ for each $n\geq 1$. Note that $G_n$ is a finite bipartite dHp graph with sides $A_n$ and $B_n$ for each $n\in\mathbb{N}$.

    By Theorem \ref{thm3.2}, each $G_n$ contains a collection of pairwise disjoint cycles that covers all vertices of $A_n$. Let us denote it by $C_n$. Given $v\in A$, there exists $n\in\mathbb{N}$ such that $v\in A_n$. Therefore $v\in V(C_m)$ for all $m\geq n$. Since $v$ has finite degree, there exists a connected component $H_v$ of the $F$-limit of $\langle C_n:n\in\mathbb{N}\rangle$ that contains $v$. Let $H$ be the union of all such $H_v$ for $v$ in $A$.

    Let $v\in A$ and suppose that $N(v)=\lbrace w_1, \cdots , w_k \rbrace $. For each $w_i, w_j\in N(v)$ with $i<j$ consider $U_{ij}= \lbrace n\in\mathbb{N}: vw_i, vw_j \in E(C_n)\rbrace$. Since $\bigcup_{ij=1}^k U_{ij}= \omega\in F$ and $F$ is an ultrafilter, then there are $i<j$ such that $U_{ij}\in F$. Therefore, $vw_i, vw_j\in E(H)$, so every vertex of $A$ has degree exactly equal to $2$ in $H$.

    Suppose there is $v\in B\cap V(H)$ with degree greater than $2$ in $H$. Let $u,w,t\in A$ be such that $vu, vw, vt\in E(H)$. Then, by the definition of $F$-limit, there is $n\in\mathbb{N}$ such that $vu, vw, vt \in E(C_n)$, and this is a contradiction, since $C_n$ is the union of disjoint pairwise cycles. Therefore, each vertex of $B\cap V(H)$ have a degree less than or equal to $2$ in $H$. Let $v\in B\cap V(H)$ with degree equal to 1 in $H$. Suppose by contradiction that $v$ has finite degree in $G$. Let $N(v)=\lbrace w_1, \cdots , w_k \rbrace $. For each $w_i, w_j\in N(v)$ with $i<j$ consider $U_{ij}= \lbrace n\in\mathbb{N}: vw_i, vw_j \in E(C_n)\rbrace$. Since $\bigcup_{ij=1}^k U_{ij}=\omega\in F$ and $F$ is an ultrafilter, then there are $i<j$ such that $U_{ij}\in F$. Therefore, $vw_i, vw_j\in E(H)$, which is a contradiction.
\vspace{0.2cm}

    \textbf{(b):} Suppose that $B'$ is finite. For every $v\in A$ we set $A_v=\lbrace w\in A: N^2(\lbrace v,w\rbrace)\setminus B'\neq \emptyset \rbrace$. Since $v\in A$ has finite degree, then $A_v$ is finite for all $v\in A$. Hence, there exists $X\subset A$ such that, for any $v,w\in X$, $v\notin A_w$ and $\vert X\vert > \vert B'\vert$. Hence, $N^2(X)=B'$ and, therefore, $\vert N^2(X)\vert <\vert X\vert$ which is a contradiction.

\vspace{0.2cm}

    \textbf{(c):} Let $H$ be such as in (a). If $H$ contains at most finitely many vertices of $B'$, then $W=\lbrace v\in A: vu\in E(H) \text{ for some } u\notin B'\rbrace$ is an infinite set. Let $W=\lbrace u_i : i\in\mathbb{N}\rbrace$ be an enumeration such that for every $i\in\mathbb{N}$ there is $w_i\notin B'$ such that $u_{2i} w_i, u_{2i+1}w_i\in E(H)$. Consider $B'\setminus V(H)=\lbrace v_i : i\in\mathbb{N}\rbrace$. Since $v_0\in B'$, then there exists $i_0\in \mathbb{N}$ such that $u_{2i_0}, u_{2i_0+1}\in N(v_0)$. Then, remove the edges $u_{2i_0}w_{i_0}, u_{2i_0+1}w_{i_0}$ from $H$ and add the edges $u_{2i_0}v_0, u_{2i_0+1}v_0$ in $H$. Suppose we inductively add all $v_0, \cdots , v_k$ into $H$. Now let us add $v_{k+1}$ into $H$. Since $v_{k+1}\in B'$ there exists $i_{k+1}\in \mathbb{N}$ with $i_{k+1}> i_k$ such that $u_{2i_{k +1}}, u_{2i_{k+1}+1}\in N(v_{k+1})$. Then remove the edges $u_{2i_{k+1}}w_{i_{k+1}}, u_{2i_{k+1}+1}w_{i_{k+1}}$ from $H$ and add the edges $u_{2i_{k+1}}v_{k+1}, u_{2i_{k+1}+1}v_{k+1}$ into $H$. Therefore, by induction it is possible to obtain that $V(H)\cap B'$ is infinite.

       \vspace{0.2cm}
       
        \textbf{(d):} Consider the enumeration $S=\lbrace v_{2n}: n\in\mathbb{N}\rbrace$ if $S$ is infinite, and $S=\lbrace v_{2n}: n\leq k\rbrace$ for some $k\in\mathbb{N}$ if $S$ is finite. For each $v_{2n}\in S$ denote by $x_{2n}\in A$ the only vertex such that $x_{2n}v_{2n}\in E(H)$. As $v_0\in S$, $N_G(v_0)\cap \overline{A}$ is infinite. So, there is $u_0\in \overline{A}$ such that $v_0u_0\in E(G)$ and $v_0u_0\notin E(H)$. Then, set $H_0'=(V(H), (E(H)\setminus \lbrace u_0w_{u_0}\rbrace)\cup \lbrace v_0u_0\rbrace)$, and $A_0=\lbrace x_0, u_0\rbrace$. If $w_{u_0}$ has degree 0 in $H_0'$, then set $H_0=(V(H_0')\setminus \lbrace w_{u_0}\rbrace, E(H_0'))$. If $w_{u_0}$ has degree 1 in $H_0'$, then set $H_0=H_0'$. Let $v_1=w_{u_0}$ and denote by $x_{1}\in A$ the only vertex such that $x_{1}v_{1}\in E(H_0)$. 

        Since $N(v_1)\cap \overline{A}\setminus A_0$ is infinite, there exists $u_1\in \overline{A}\setminus A_0$ such that $v_1u_1\in E(G)$ and $v_1u_1\notin E(H_0)$. Then, set $H_1'=(V(H_0), (E(H_0)\setminus \lbrace u_1w_{u_1}\rbrace)\cup \lbrace v_1u_1\rbrace)$. Set $A_1=A_0 \cup \lbrace x_1, u_1\rbrace$. If $w_{u_1}$ has degree 0 in $H_1'$, then set $H_1=(V(H_1')\setminus \lbrace w_{u_1}\rbrace, E(H_1'))$. If $w_{u_1}$ has degree 1 in $H_1'$, then set $H_1=H_1'$. Let $v_3=w_{u_1}$ be and denote by $x_3\in A$ the only vertex from which $v_3x_3\in E(H_1)$.

        Suppose that for all $m\in \mathbb{N}$ with $0\leq m < k$ we have constructed $H_m$ and $A_m$ as in the previous steps. If there is no $v_k$ then choice $H_k=H_{k-1}$ and $A_k=A_{k-1}$. If there is $v_k$, then either $v_k\in S$ or $v_k=w_{u_i}$ for some $0\leq i \leq k-1$. Hence, $N(v_k)\cap (\overline{A}\setminus A_{k-1})$ is infinite. So, there is $u_k\in \overline{A}\setminus A_{k-1}$ such that $v_ku_k\in E(G)$ and $v_ku_k\notin E (H_{k-1})$. Then, set $H_k'=(V(H_{k-1}), (E(H_{k-1})\setminus \lbrace u_kw_{u_k}\rbrace)\cup \lbrace v_ku_k\rbrace)$, and $A_k=A_{k-1} \cup \lbrace x_k, u_k\rbrace$. If $w_{u_k}$ has degree 0 in $H_k'$, then set $H_k=(V(H_k')\setminus \lbrace w_{u_k}\rbrace, E(H_k'))$. If $w_{u_k}$ has degree 1 in $H_k'$, then set $H_k=H_k'$. Let $v_{n_k}=w_{u_k}$, where $n_k\in \mathbb{N}$ is the first odd number to which a vertex has not yet been assigned, and denote by $x_{n_k}\in A$ the only vertex such that $v_{n_k}x_{n_k}\in E(H_k)$.

        Inductively, we have for all $k\in\mathbb{N}$ a subgraph $H_k$ of $G$ in which all vertices $\lbrace v_0, \cdots , v_k\rbrace$ have degree equal to 2. Consider $C $ the $F$-limit of $\langle H_n: n\in\mathbb{N}\rangle$. Note that all vertices of $C$ have degree equal to 2 in $C$. Indeed, if $v\in V(C)\cap \lbrace v_n : n\in\mathbb{N}\rbrace$, then $v=v_m$ for some $m\in\mathbb{N}$ and therefore, for every $k\geq m$, $vu_m, vx_m\in E(H_k)$. Therefore, $vu_m, vx_m\in E(C)$. Furthermore, these are the only edges of $v$ in all $H_k$ for $k\geq m$. If $v\in V(C)\cap A_m$ for some $m\in\mathbb{N}$, then the neighbors of $v$ are the same two vertices for all $H_k$ with $k\geq m$ . Therefore, $v$ has degree equal to 2 in $C$. If $v\in V(C)\setminus (\lbrace v_m: m\in\mathbb{N}\rbrace \cup \bigcup_{m\in\mathbb{N}} A_m)$, then there are only $s, t \in V(G)$ such that $vs, vt \in E(H)$ and $vs, vt\in E(H_m)$ for all $m\in\mathbb{N}$. Therefore, $vs, vt\in E(C)$, therefore, $v$ has degree 2 in $C$. Thus, the collection of connected components of $C$ forms the desired cover.

\end{proof}

In the proof of Lemma \ref{lemma} (a), it would be intuitive to apply Hall's Theorem twice to construct the subgraph $H$, rather than using the $F$-limit. However, this approach if it is not done carefully might yield a subgraph where some vertices in $B \cap V(H)$ of degree 1 in $H$ have finite degree in $G$.

\begin{thm}\label{thm1}
    Let $G(A,B)$ be a bipartite infinite dHp graph with infinite sides $A$ and $B$, where $A$ is locally finite. If for each $v\in B$, either $N(v)$ is finite or $N(v)=A\setminus F_v$ for some finite $F_v\subset A$, then there is a collection of pairwise disjoint 2-regular subgraphs in $G$ covering all the vertices of $A$.    
\end{thm}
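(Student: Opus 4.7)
The plan is to assemble Theorem \ref{thm1} directly from the four parts of Lemma \ref{lemma}, with only minimal extra bookkeeping. The hypothesis on the vertices of $B$ exactly says that $B=B'\cup B^{\mathrm{fin}}$, where $B^{\mathrm{fin}}:=\{v\in B:N(v)\text{ finite}\}$, so any $v\in B$ of infinite degree in $G$ automatically lies in $B'$. Lemma \ref{lemma}(b) gives that $B'$ is infinite, hence Lemma \ref{lemma}(c) produces a subgraph $H\subset G$ with the properties of Lemma \ref{lemma}(a) and with $V(H)\cap B'$ infinite.

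Next I would set $S:=\{v\in B\cap V(H): \deg_H(v)=1\}$. By Lemma \ref{lemma}(a), every $v\in S$ has infinite $G$-degree, and by the dichotomy on $N(v)$ this forces $v\in B'$, so $S\subset B'$ and in particular $N_G(v)=A\setminus F_v$ is cofinite in $A$ for all $v\in S$. Now define
\[
\overline{A}:=\{u\in A:\exists\, w\in B'\cap V(H)\text{ with } uw\in E(H)\},
\]
and for each $u\in\overline{A}$ pick one such $w_u\in B'\cap V(H)$. Since $V(H)\cap B'$ is infinite and each vertex of $B\cap V(H)$ has $H$-degree at most $2$, the set $\overline{A}$ is infinite. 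For each $u\in\overline{A}$, the chosen $w_u$ lies in $B'$, so $N_G(w_u)=A\setminus F_{w_u}$ is cofinite in $A$, hence $N_G(w_u)\cap\overline{A}$ is cofinite in $\overline{A}$ and therefore infinite. Likewise, for every $v\in S\subset B'$, the set $N_G(v)\cap\overline{A}$ is cofinite in $\overline{A}$, so infinite.

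Thus all hypotheses of Lemma \ref{lemma}(d) are verified with this $H$, this $S$, and this $\overline{A}$. Applying (d) yields the desired collection of pairwise disjoint $2$-regular subgraphs of $G$ covering $A$, which is exactly the conclusion of Theorem \ref{thm1}.

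There is no genuine obstacle here once Lemma \ref{lemma} is in hand; the only subtlety is to make sure that the witnesses $w_u$ for vertices $u\in\overline{A}$ are chosen inside $B'$ rather than arbitrarily in $B\cap V(H)$, because only cofiniteness of $N_G(w_u)$ guarantees that $N_G(w_u)\cap\overline{A}$ stays infinite throughout the inductive rerouting used in the proof of (d). Checking $S\subset B'$ via the infinite-degree clause of Lemma \ref{lemma}(a) is the other point where the special structure of $B$ assumed in Theorem \ref{thm1} is essential.
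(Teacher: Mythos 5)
Your proposal is correct and follows essentially the same route as the paper: obtain $H$ via Lemma \ref{lemma}(a)--(c), observe that every degree-$1$ vertex of $B\cap V(H)$ has infinite $G$-degree and hence lies in $B'$, define $\overline{A}$ and $S$ exactly as the paper does, verify the hypotheses of Lemma \ref{lemma}(d) from the cofiniteness of $N_G(w)$ for $w\in B'$, and conclude. Your explicit check that $\overline{A}$ is infinite is a small point the paper leaves implicit, but it is not a different argument.
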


\begin{proof} 
        Since $A$ is locally finite, there exists a subgraph $H$ of $G$ with the properties guaranteed by Lemma \ref{lemma} (a).
        
        Furthermore, each vertex of $B\cap V(H)$ of degree $1$ in $H$ belongs to the set $B'$. Thus, every connected component of $H$ is either a double ray or a simple ray with an initial vertex in $B'$ or a finite cycle or a finite path with final vertices in $B'$. By  Lemma \ref{lemma} (b), $B'$ is infinite. By Lemma \ref{lemma} (c), without loss of generality we may assume that $B'\cap V(H)$ is infinite.

        Consider $\overline{A}=\lbrace u\in A : \exists v\in B'\text{ such that } vu\in E(H)\rbrace $ and $$S=\lbrace v \in B'\cap V(H) : deg_H(v)=1\rbrace \subset \lbrace v\in V(H): deg_H(v)=1\rbrace.$$ 
        For each $u\in \overline{A}$, denote by $w_u$ a vertex in $B'\cap V(H)$ such that $uw_u\in E(H)$. Note that, for each $u\in \overline{A}$ the set $N(w_u)\cap \overline{A}$ is infinite.  Furthermore, for each $v\in S$, the set $N(v)\cap \overline{A}$ is infinite. Therefore, we can apply Lemma \ref{lemma} (d) to obtain the desired cover.

\end{proof}

\begin{thm}\label{thm1.1}
    Let $G(A,B)$ be a bipartite infinite dHp graph with infinite sides $A$ and $B$, where $A$ is locally finite. If $B$ has only finitely many vertices with finite degree, then there is a collection of pairwise disjoint 2-regular subgraphs in $G$ covering all the vertices of $A$.
\end{thm}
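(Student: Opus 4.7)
The plan is to reduce the theorem directly to Lemma~\ref{lemma}(d) by exhibiting appropriate sets $\overline{A}$ and $S$. First, I apply Lemma~\ref{lemma}(a) to obtain a subgraph $H \subseteq G$ in which every vertex of $A$ has degree exactly $2$, every vertex of $B \cap V(H)$ has degree $1$ or $2$, and every vertex of $B \cap V(H)$ of degree $1$ in $H$ has infinite degree in $G$. Let $F \subseteq B$ denote the finite set of vertices of $B$ that have finite degree in $G$; so every vertex of $B \setminus F$ has infinite degree in $G$.

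Next, I define $\overline{A} = \{u \in A : u \text{ has an } H\text{-neighbor in } B \setminus F\}$. The key observation is that $\overline{A}$ is cofinite in $A$: a vertex $u \in A \setminus \overline{A}$ must have both of its two $H$-neighbors in $F$, and hence in particular $u$ is $G$-adjacent to $F$; since $F$ is finite and each of its vertices has finite $G$-degree, only finitely many $u \in A$ can have this property. Thus $\overline{A}$ is infinite. For each $u \in \overline{A}$ choose $w_u \in (B \setminus F) \cap V(H)$ with $uw_u \in E(H)$; then $w_u$ has infinite degree in $G$, and since $A \setminus \overline{A}$ is finite, $N_G(w_u) \cap \overline{A}$ is cofinite in the infinite set $N_G(w_u)$ and hence infinite.

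Now set $S = \{v \in B \cap V(H) : \deg_H(v) = 1\}$. By Lemma~\ref{lemma}(a) every $v \in S$ has infinite degree in $G$, so $S \subseteq B \setminus F$, and again the fact that $A \setminus \overline{A}$ is finite guarantees that $N_G(v) \cap \overline{A}$ is infinite for every $v \in S$. The hypotheses of Lemma~\ref{lemma}(d) are therefore satisfied, and applying it produces the desired collection of pairwise disjoint $2$-regular subgraphs covering $A$.

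The only non-mechanical point is the cofiniteness of $\overline{A}$, and the main obstacle is really just spotting that the finiteness of $F$ together with the finiteness of the $G$-degree of each vertex in $F$ bounds $|A \setminus \overline{A}|$; once this is observed, all the infinite-neighborhood conditions required by Lemma~\ref{lemma}(d) follow automatically from the fact that removing a finite subset of $A$ from a $G$-neighborhood of any vertex of infinite degree in $G$ leaves an infinite set.
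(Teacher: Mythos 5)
Your proof is correct and follows essentially the same route as the paper: apply Lemma~\ref{lemma}(a), observe that the set of vertices of $A$ with an $H$-neighbor among the finitely many finite-degree vertices of $B$ is finite, and feed the resulting $\overline{A}$ and $S$ into Lemma~\ref{lemma}(d). The only (harmless) difference is that you take $\overline{A}$ to be the vertices with \emph{at least one} infinite-degree $H$-neighbor, whereas the paper takes those all of whose $H$-neighbors have infinite degree; both sets are cofinite in $A$ and satisfy the hypotheses of Lemma~\ref{lemma}(d), and you in fact justify the cofiniteness more explicitly than the paper does.
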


\begin{proof}
        Let $\overline{B}$ be the subset of vertices of $B$ which have infinite degree. 
        Since $A$ is locally finite, we can apply Lemma \ref{lemma} (a) and obtain a subgraph $H$ of $G$ with the properties guaranteed by Lemma \ref{lemma} (a).
        
        Furtheremore, each vertex of $B\cap V(H)$ of degree $1$ in $H$ belongs to the set $\overline{B}$. Thus, every connected component of $H$ is either a double ray or a simple ray with an initial vertex in $\overline{B}$ or a finite cycle or a finite path with final vertices in $\overline{B}$.    

        Consider $T=\lbrace u\in A : \exists v\in B\setminus \overline{B} \text{ with } vu\in E(H)\rbrace $, $\overline{A}= A\setminus T$ and $S=\lbrace v \in B\cap V(H) : deg_H(v)=1\rbrace $. Note that $\overline{A}$ is cofinite in $A$. For each $u\in \overline{A}$, there is $w_u\in \overline{B}$ such that $uw_u\in E(H)$. Hence, for each $u\in \overline{A}$ the set $N(w_u)\cap A$ is infinite. Furthermore, for each $v\in S$ the set $N(v)\cap \overline{A}$ is infinite. Therefore, we can apply the Lemma \ref{lemma} item (d) to obtain the desired cover.

\end{proof}

In the paper \cite{lavrov2023halltypeconditionpathcovers},  Lavrov and Vandenbussche searched for Hall-type conditions for the existence of a path cover for bipartite graphs. With this, a weaker version of Salia's conjecture is presented. Theorem \ref{theorem2.15} presented below provides a sufficient condition for a certain class of infinite graphs, which implies the validity of the conjecture, for finite graphs, presented in \cite{lavrov2023halltypeconditionpathcovers}.

\begin{thm}\label{theorem2.15}
    If every bipartite infinite dHp graph $G(A,B)$ with infinite sides $A,B$, where $A$ is locally finite and for each $v\in B$ either $N(v)$ is finite or there is $F_v\subset A$ finite such that $N(v)=A\setminus F_v$, admits a double ray $A$-cover, then every bipartite finite dHp graph $M(X,Y)$ admits a path in $M$ covering all the vertices of $X$.
\end{thm}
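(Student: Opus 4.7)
My approach is to prove the theorem by a concrete embedding of $M$ into an infinite dHp graph of the required form. Given a finite bipartite dHp graph $M(X,Y)$ (with $|X| \geq 2$; the case $|X| = 1$ being trivial), I will construct an infinite bipartite graph $G(A,B)$ satisfying all the hypotheses of the theorem's antecedent, then apply the assumption to obtain a double ray $R$ covering $A$, and finally show that $R$ must traverse one distinguished copy of $M$ inside $G$ as a single connected path, yielding the desired path in $M$ covering $X$.

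The construction mimics Example~\ref{ex2.3} with $M$ in place of $H$: take pairwise disjoint copies $M_n(X_n, Y_n) \cong M$ for $n \in \mathbb{N}$, set $A = \bigcup_n X_n$ and $B = \bigcup_n Y_n \cup \{y_i : i \in \mathbb{N}\}$, retain the edges of each $M_n$, and add an edge $y_i u$ for every $u \in X_n$ with $n \geq i-1$. Then each $u \in X_n$ has finite degree $\deg_M(u) + (n+2)$, so $A$ is locally finite; every $v \in Y_n$ has $N(v) \subseteq X_n$ finite; and $N(y_i) = A \setminus \bigcup_{k < i-1} X_k$ is cofinite in $A$. The dHp property of $G$ follows by rerunning verbatim the three-case argument from the proof of Proposition~\ref{prop2.5}, with the dHp property of $M$ replacing that of $H$. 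Hence $G$ belongs to the class covered by the hypothesis and therefore admits a double ray $R$ with $V(R) \supseteq A$.

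The remaining work is a short count at the copy $M_0$. A vertex of $X_0$ is adjacent in $G$ only to vertices of $Y_0$ and to those $y_i$ with $0 \geq i-1$, so only $y_0$ and $y_1$ qualify as external neighbors. Writing $R_0 := R[V(M_0)]$, its connected components are pairwise disjoint paths in $M_0$ whose union contains $X_0$, and the two endpoints of each component lie in $X_0$ and are joined by $R$-edges to vertices of $\{y_0, y_1\}$. Hence if $s_0$ denotes the number of components, $2 s_0 \leq 4$, i.e., $s_0 \leq 2$. If $s_0 = 2$, all four potential $R$-edges from $\{y_0, y_1\}$ must terminate in $X_0$, so every $R$-neighbor of every vertex of $W := V(M_0) \cup \{y_0, y_1\}$ is inside $W$; connectedness of $R$ then forces $V(R) \subseteq W$, contradicting the infinitude of $A \subseteq V(R)$. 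Therefore $s_0 = 1$, and the unique component of $R_0$ is a path in $M_0$ whose vertex set contains $X_0$; pulling it back through the isomorphism $M_0 \cong M$ produces the required path in $M$ covering $X$. The only step with real content is the dHp verification of $G$, which is essentially a reprint of Proposition~\ref{prop2.5}; the count of $s_0$ is then immediate from the very restricted adjacency pattern of $X_0$.
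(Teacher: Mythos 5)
Your proposal is correct, and it reaches the conclusion by a genuinely different construction from the paper's. The paper does not build the ambient infinite graph out of copies of $M$; instead it takes an arbitrary infinite dHp graph $G(A,B)$ of the required type (whose existence it tacitly assumes), forms $G_1$ by disjointly adjoining $M$ together with two new vertices $v_0,v_1$ joined to every vertex of $A\cup X$, verifies dHp of $G_1$ by a case analysis on how a set $S\subset A\cup X$ splits between $A$ and $X$, and then argues that $\{v_0,v_1\}$ is a bottleneck through which only a single double ray of the cover can enter and leave $M$, so that this double ray meets $M$ in one path covering $X$. Your tower of copies $M_n$ with the gateway vertices $y_i$ plays exactly the role of the paper's $v_0,v_1$ (the vertices of $X_0$ see only $y_0,y_1$ outside $M_0$), and your endgame --- at most four crossing edges, hence at most two path components, and two components would trap an infinite object in the finite set $V(M_0)\cup\{y_0,y_1\}$ --- is the same counting argument the paper compresses into one sentence; if anything you spell out the exclusion of the two-component case more carefully than the paper does. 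What your version buys is self-containedness: you do not need to posit a pre-existing infinite dHp graph with the locally-finite/cofinite-degree property, since your graph is manufactured from $M$ alone, and the dHp verification genuinely does reduce to the three cases of Proposition \ref{prop2.5} (the finiteness of each $X_n$ is harmless there and is in fact what makes every $N(y_i)$ cofinite). One small caveat: the paper reads ``double ray $A$-cover'' as a cover of $A$ by pairwise disjoint double rays (its proof works with a collection $D$ and singles out the one element meeting $X$), whereas you assume a single double ray $R\supseteq A$; your argument survives this weaker hypothesis essentially unchanged --- the degree bound $\deg(y_0)+\deg(y_1)\le 4$ holds in the union of a disjoint cover, and in the two-component case each individual double ray meeting $W$ would be confined to the finite set $W$ --- but you should phrase the final step in terms of the union of the cover rather than the connectedness of a single $R$.
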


\begin{proof}
    Let $M(X,Y)$ be a finite bipartite dHp graph with sides $X$ and $Y$. Let $G(A,B)$ also be an infinite bipartite dHp graph with sides $A$ and $B$ in which for each $v\in B$, either $N(v)$ is finite or $N(v)= A\setminus F_v$ for some finite subset $F_v\subset A$. Consider a new graph $$G_1=(V(G)\cup V(M)\cup \lbrace v_0, v_1\rbrace, E(G)\cup E(M)\cup \lbrace v_iu : i=0,1 \text{ and } u\in A\cup X\rbrace)$$
    with $v_0,v_1\notin V(G)\cup V(M)$. Note that, $G_1$ is an infinite bipartite graph with sides $A_1=A\cup X$ and $B_1=B\cup Y \cup \lbrace v_0, v_1\rbrace$. Furthermore, if $v\in B_1$ then either $N_{G_1}(v)$ is finite or $N_{G_1}(v)=A_1\setminus F_v$ for some finite subset $F_v\subset A_1$. Finally, note that $G_1$ is a dHp graph. Indeed, let $S\subset A_1$ with $\vert S\vert \geq 2$. If $S\subset A$, then as $G$ is a dHp graph, $\vert N^2(S)\vert \geq \vert S\vert$ in $G$ and therefore in $G_1$. If $S\subset X$, then as $M$ is a dHp graph, $\vert N^2(S)\vert \geq \vert S\vert$ in $M$ and therefore in $G_1$. If $\vert S\cap A \vert \neq 1$ and $\vert S\cap X\vert \neq 1$, then $\vert N^2_{G_1}(S)\vert \geq \vert N^2_{G}(S\cap A)\vert + \vert N_M^2(S\cap X)\vert \geq \vert S\cap A \vert + \vert S\cap X\vert = \vert S \vert$. If $\vert S\cap A\vert =\vert S\cap X\vert =1$, then $N^2_{G_1}(S)=\lbrace v_0, v_1\rbrace$, therefore $\vert N^ 2_{G_1}(S)\vert \geq \vert S\vert$. If $\vert S\cap A\vert =1$ and $\vert S\cap X\vert >1$, then since $M$ is a dHp graph $\vert N^2_M(S\cap X)\vert \geq \vert S\cap X\vert $. Hence, $\vert N^2_{G_1}(S)\vert = \vert \lbrace v_0, v_1\rbrace \vert + \vert N^2_M(S\cap X)\vert \geq \vert\lbrace S\cap A\rbrace\vert + \vert S\cap X\vert= \vert S\vert$. If $\vert S\cap A\vert >1$ and $\vert S\cap X\vert =1$, then an argument analogous to the previous case shows that $\vert N^2_{G_1}(S)\vert \geq \vert S\vert$. Therefore, $G_1$ is a dHp graph.

    So, by hypothesis, $G_1$ admits a double ray $A_1$-cover. We denote it by $D$. Note that there is only one element of $D$ which contains the vertices of $X$. This occurs due to the fact that the only vertices that connect the graph $M$ to the graph $G$ are the vertices $v_0$ and $v_1$ which only let a single double ray enter and leave $M\subset G_1$ . Let $R\in D$ be such that $R\supset X$. Then, $R\cap M$ is a finite path that covers all vertices of $X$ as desired.
\end{proof}

Next, we shall consider the case when $B$ is locally finite.

\begin{lemma}\label{lemma22}
    Let $G(A,B)$ be a bipartite infinite dHp graph with infinite countable sides $A$ and $B$, where $B$ is locally finite. Then there exists an infinite cycle in $G$ covering all vertices of $A$.
\end{lemma}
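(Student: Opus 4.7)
The plan is to construct a double ray $R$ in $G$ that covers $A$ and then observe that $G$ has exactly one end, so that $R$ automatically closes to a circle in $|G|$. The crux is the following structural lemma.

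\textbf{Structural lemma.} \emph{For every finite $F\subseteq V(G)$ and every $u\in A\setminus F$, the component of $u$ in $G\setminus F$ contains all of $A\setminus F$.} To prove it, let $K$ be the component of $u$ in $G\setminus F$ and suppose towards a contradiction that $A\setminus(F\cup K)$ is infinite. For any $v'\in A\setminus(F\cup K)$, the double Hall property applied to $\{u,v'\}$ gives $|N(u)\cap N(v')|=|N^2(\{u,v'\})|\geq 2$. Any common neighbour $b$ lies in $B$; if $b\notin F$, then $b$ is adjacent to both $u$ and $v'$ in $G\setminus F$, placing $b$ simultaneously in the component of $u$ and in the component of $v'$, which is impossible. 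Hence every common neighbour lies in $F\cap B$, so each $v'$ contributes at least two edges to $\sum_{b\in F\cap B}\deg_G(b)$. The latter sum is finite because $B$ is locally finite and $F$ is finite, giving an upper bound on the number of admissible $v'$ and contradicting our assumption. Running the argument for any second $u'\in A\setminus F$ shows that $u'$ lies in $K$ as well.

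Given the lemma, I would build $R$ by back-and-forth. Enumerate $A=\{a_n:n\in\mathbb{N}\}$ and set $P_0=(a_0)$. At stage $n+1$, $P_n$ is a finite simple path with endpoints $u_n^L,u_n^R\in A$; alternating between the two sides, let $u$ be the endpoint on the side being extended and set $F=V(P_n)\setminus\{u\}$. Pick a target $a^*\in A\setminus V(P_n)$, taking $a^*=a_{n+1}$ if $a_{n+1}\notin V(P_n)$, and otherwise any other vertex of $A\setminus V(P_n)$. Both $u$ and $a^*$ lie in $A\setminus F$, hence by the lemma in the same component of $G\setminus F$, and any finite path from $u$ to $a^*$ inside that component can be concatenated at $u$ with $P_n$; its internal vertices avoid $V(P_n)\setminus\{u\}$, so the result $P_{n+1}$ is a simple path with a new $A$-endpoint $a^*$. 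Alternating the side being extended makes both halves grow without bound, and by construction every $a_n$ lies in $V(P_n)$, so $R=\bigcup_n P_n$ is a double ray covering $A$.

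It remains to show that $R$ is an infinite cycle. The structural lemma implies $G$ has at most one end, since two ends would produce a finite $F$ and two components of $G\setminus F$ each containing infinitely many vertices of $A$, contradicting the lemma. The constructed $R$ itself contains rays, so $G$ has exactly one end $\omega$, and both tails of $R$ belong to $\omega$; thus they are equivalent, and $R\cup\{\omega\}\subseteq|G|$ is homeomorphic to $\mathbb{S}^1$, i.e., an infinite cycle covering $A$. The main obstacle is the structural lemma of the first paragraph, where the double Hall property and the local finiteness of $B$ combine in the essential way; once it is established, the back-and-forth construction and the identification of $R$ with an infinite cycle are routine.
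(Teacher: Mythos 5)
Your proposal is correct, and it shares the paper's overall architecture -- enumerate $A$, grow a finite path from both ends so that it absorbs each $a_n$ in turn, and take the union as a double ray covering $A$ -- but the mechanism you use for the extension step is genuinely different. The paper extends the path directly and locally: given the current endpoint $v_{n_{k-1}}$ and the next target $v_{n_{k+1}}$, it uses local finiteness of $B$ to choose an auxiliary vertex $v_s\in A$ whose common neighbourhoods with both of them avoid the finitely many $B$-vertices already on the path, and then applies the double Hall property to $\{v_{n_{k-1}},v_s\}$ and $\{v_{n_{k+1}},v_s\}$ to produce a length-four connecting path. You instead isolate a structural lemma -- after deleting any finite $F$, all of $A\setminus F$ lies in one component of $G\setminus F$ -- and connect endpoint to target by an arbitrary path in that component; your counting argument via $\sum_{b\in F\cap B}\deg_G(b)$ is sound, and the concluding step (two disjoint components cannot each be cofinite in the infinite set $A\setminus F$) works, though the sentence ``running the argument for any second $u'$'' deserves that one extra line of explanation. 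Your route buys something the paper's proof leaves implicit: the structural lemma immediately shows that $G$ has exactly one end, which is precisely what is needed to conclude that the constructed double ray, together with that end, is a circle in $\vert G\vert$ rather than merely a spanning double ray; the paper's proof stops at the double ray and does not address this final topological step. In exchange, the paper's inline construction is more economical, producing connectors of bounded length without invoking any connectivity of $G\setminus F$.
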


\begin{proof}
    Since $A$ is countable let $A=\lbrace v_i : i\in\mathbb{N}\rbrace $. Since $G$ is a dHp graph, then there are distinct $u_1, u_2\in B$ such that $u_1\in N(v_0)\cap N(v_1)$ and $u_2\in N(v_0)\cap N (v_2)$. Thus, the paths $P_1=v_0u_1v_1$ and $P_2=v_2u_2v_0u_1v_1$ are contained in $G$. Furthermore, $P_2$ extends $P_1$ and let us denote $P_2=P_1\cup  v_2u_2v_0$.

    Suppose inductively that for all $m \in \mathbb{N}$ with $1 < m \leq k$, $P_m$ is a path with endpoints $v_{n_m}$ and $v_{n_{m-1}}$ that extends $P_{m-1}$. Here, $v_{n_m}$ is the vertex in $A \setminus P_{m-1}$ with the smallest index. Let $v_{n_{k+1}}\in A\setminus P_k$ be the smallest index. Let us construct a path $P_{k+1}$ that extends $P_k$ and with final vertices $v_{n_k}$ and $v_{n_{k+1}}$.

    Since $B$ is locally finite, for all but finitely many $v_1\in A$ we have $$N(v_{n_{k-1}})\cap N(v_s)\cap (V(P_k)\cap B)=N(v_{n_{k+1}})\cap N(v_s)\cap (V(P_k)\cap B)=\emptyset.$$ 
    Pick such a $v_s$, $u_s\in N(v_{n_{k-1}})\cap N(v_s)$, $w_s\in N(v_{n_{k+1}})\cap N(v_s)\setminus \lbrace u_s\rbrace$, and note that $P_{k+1}=P_k\cup v_{n_{k-1}}u_sv_sw_sv_{n_{k+1}}$ is as required.
    
\end{proof}

\begin{thm} \label{thm2}
    Let $G(A,B)$ be a bipartite infinite dHp graph with infinite sides $A$ and $B$, where $B$ is locally finite. Then there exists a collection of pairwise disjoint 2-regular subgraphs in $G$ that covers all vertices of $A$.
\end{thm}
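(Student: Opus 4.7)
The plan is to reduce the uncountable case to Lemma \ref{lemma22} by decomposing $G$ along a continuous chain of countable elementary submodels. If $|A|=\aleph_0$, Lemma \ref{lemma22} applies directly (together with the fact that local finiteness of $B$ forces $B$ to be countable under our connectedness assumption); otherwise set $\kappa:=|A|>\aleph_0$. Fix a regular cardinal $\theta$ large enough that $G\in H(\theta)$ and construct a continuous chain $\langle M_\alpha:\alpha\le\kappa\rangle$ of countable elementary submodels of $H(\theta)$ with $G,A,B\in M_0$, $A\subseteq M_\kappa$, and such that for every successor $\alpha=\beta+1$ the set $A\cap(M_\alpha\setminus M_\beta)$ is countably infinite. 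This last condition is easy to arrange by adjoining, at each successor stage, countably many fresh elements of $A$ before taking a countable elementary closure.

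For each successor $\alpha=\beta+1$ put $A_\alpha:=A\cap(M_\alpha\setminus M_\beta)$ and $B_\alpha:=B\cap(M_\alpha\setminus M_\beta)$, together with $A_0:=A\cap M_0$ and $B_0:=B\cap M_0$. By continuity of the chain, $\{A_\alpha\}$ and $\{B_\alpha\}$ are partitions of $A$ and $B$ respectively. Set $G_\alpha:=G[A_\alpha\cup B_\alpha]$; these induced subgraphs are pairwise vertex-disjoint by construction.

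The core step is to verify that each $G_\alpha$ is a countable bipartite dHp graph whose $B$-side $B_\alpha$ is locally finite. Local finiteness is inherited from $G$. For the dHp property, fix a finite $X\subseteq A_\alpha$ with $|X|\ge 2$; then $X\in M_\alpha$, so by elementarity applied to the statement ``$G$ is dHp'' there exists $Y\in M_\alpha$ with $Y\subseteq N^2_G(X)$ and $|Y|=|X|$, and since $Y$ is finite we have $Y\subseteq M_\alpha$. Crucially, no element of $Y$ can lie in $M_\beta$: if $v\in B\cap M_\beta$, then local finiteness of $B$ together with elementarity of $M_\beta$ forces $N_G(v)\in M_\beta$ and hence $N_G(v)\subseteq M_\beta$, contradicting the fact that $v$ has a neighbor in $X\subseteq A\setminus M_\beta$. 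Therefore $Y\subseteq B_\alpha$, giving $|N^2_{G_\alpha}(X)|\ge|X|$. For infinite $X\subseteq A_\alpha$, applying the finite case to successive finite subsets of $X$ of size $n$ produces at least $n$ witnesses in $N^2_{G_\alpha}(X)$ for each $n$, so this set is infinite as required.

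Each $G_\alpha$ is thus a countable bipartite dHp graph with $B_\alpha$ locally finite and with both sides countably infinite (the $A$-side by construction, the $B$-side because dHp applied to $A_\alpha$ itself produces infinitely many witnesses in $B_\alpha$). Lemma \ref{lemma22} yields an infinite cycle $C_\alpha$ in $G_\alpha$ covering $A_\alpha$, and the family $\{C_\alpha\}$ is pairwise vertex-disjoint and covers $A$, producing the required collection of $2$-regular subgraphs. The main obstacle is precisely the dHp verification for the pieces: one must arrange that the witnesses for $N^2_G(X)$ land inside the fresh slab $M_\alpha\setminus M_\beta$, and local finiteness of $B$ is exactly what makes this work by forcing every ``old'' $B$-vertex to have an entirely ``old'' neighborhood.
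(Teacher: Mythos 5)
Your proof is correct and takes essentially the same route as the paper's: both decompose $G$ into countable pieces via countable elementary submodels, both rest on the key observation that local finiteness of $B$ plus elementarity forces $N_G(w)\subseteq M$ for every $w\in B\cap M$ (so that each piece remains dHp with locally finite $B$-side), and both then apply Lemma \ref{lemma22} to each piece. The only cosmetic difference is that the paper chooses each new submodel inside $G$ minus the union of the previous ones rather than taking slabs of a single continuous increasing chain as you do.
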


\begin{proof}
    Let $\kappa =\vert B\vert $ and $B=\lbrace u_i : i\in\kappa \rbrace$ be a bijective enumeration. Let $M_0$ be a countable elementary submodel containing all relevant objects (see e.g. \cite{Soukup}) and such that every $u_i \in M_0$ for $i < \omega$. By elementarity, $G_0 = G \cap M_0$ is a dHp graph, with sides $A \cap M_0$ and $B \cap M_0$, where $B \cap M_0$ is locally finite. Since $G_0$ is countable, by Lemma \ref{lemma22}, there is an infinite cycle $C_0$ that covers $A \cap M_0$. 

Proceeding by induction, suppose that each $M_\xi$ is defined for $\xi < \eta$. Note that, if $w \in B \cap M_\xi$ for some $\xi$, $N(w) \subset M_\xi$. Therefore, $G \setminus \bigcup_{\xi < \eta} M_\xi$ is a dHp graph. Let $M_\eta$ be a countable elementary submodel that contains the $\omega$ first $u_i$ of $G \setminus \bigcup_{\xi < \eta} M_\xi$. Define $G_\eta = M_\eta \cap (G \setminus \bigcup_{\xi < \eta} M_\xi)$. Again by elementarity, $G_\eta$ is a dHp graph with the needed hypothesis and we can apply Lemma \ref{lemma22} to obtain an infinite cycle $C_{\eta}$ that covers $A \cap M_\eta$.

Note that $\bigcup_{\xi < \kappa} C_\xi$ is a collection of disjoint infinite cycles, which is the desired.

\end{proof}

\section{Non-locally finite countable dHp graphs}\label{sec3}

In this section we will work with infinite dHp countable graphs without the restriction that one of the sides is locally finite. 

\begin{defn}
Let $G$ be an infinite graph. We say that a vertex $v\in V(G)$ is \textbf{pseudo-isolated} if there exists a finite $F\subset V(G)\setminus \lbrace v\rbrace$ such that $\{w \in V(G\setminus F): d_{G\setminus F}(v,w) = 2\} =\emptyset$.    
\end{defn}

\begin{defn}
Let $G(A, B)$ be a bipartite infinite dHp graph with infinite countable sides $A$ and $B$. Let $<$ be an order on $A$ with 
order type $\omega$. We call a double ray $R=...v_3u_3v_1u_1v_0$ $u_0v_2u_2v_4u_4...$ with $v_0\in A$
\emph{$<$-\textbf{economical} (with starting point $v_0$)}, if
\begin{itemize}
\item for every $k\in\mathbb{N}$, $v_{2k+2}$ is the $<$-minimal vertex in $A$
which can be added to 
$$R\upharpoonright[-(2k+1),2k]:=v_{2k+1}u_{2k+1}\ldots v_3u_3v_1u_1v_0u_0v_2u_2v_4u_4\ldots u_{2k-2}v_{2k}$$
on the $v_{2k}$-side, and
\item for every $k\in\mathbb{N}$, $v_{2k+3}$ is the $<$-minimal vertex in $A$
which can be added to 
$$R\upharpoonright[-(2k+1),2k+2]:=v_{2k+1}u_{2k+1}\ldots v_1u_1v_0u_0v_2u_2\ldots u_{2k-2}v_{2k}u_{2k}v_{2k+2}$$
on the $v_{2k+1}$-side.
\end{itemize}    
\end{defn}

\begin{lemma}\label{lemma3.1}
Let $G(A,B)$, $<$ and $R$ be as above. Suppose that
$G$ has no pseudo-isolated vertices in $A$ and
$R$ is $<$-economical. Then the  restriction 
$G'$ of $G$ to $A'=A\setminus\{v_i:i\in\mathbb{N}\}$ and
$B'=B\setminus\{u_i:i\in\mathbb{N}\}$ has no pseudo-isolated vertices.
\end{lemma}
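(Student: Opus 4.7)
The plan is to fix $v\in A'$ and show $v$ is not pseudo-isolated in $G'$. The first step is to exploit $<$-economicity. At even step $k\geq 0$, if $v$ were eligible to serve as $v_{2k+2}$---i.e., if there existed $u$ in the allowed witness set $B'\cup\{u_{2k}\}\cup\{u_j:j\geq 2k+2\}$ with $v_{2k}u,uv\in E(G)$---then by minimality one would have $v_{2k+2}\leq v$; since $v\in A'$ rules out $v_{2k+2}=v$, in fact $v_{2k+2}<v$. As the order type of $<$ on $A$ is $\omega$, only finitely many elements precede $v$, so this happens for only finitely many $k$; the same applies at odd steps. Fix $K_0$ such that $v$ is not available at any (even or odd) step $k\geq K_0$.

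Two consequences follow. (a) If $u\in B'\cap N(v)$ and $k\geq K_0$, then $u$ lies in both the even-step and odd-step allowed witness sets, so non-availability of $v$ forces $u\notin N(v_{2k})$ and $u\notin N(v_{2k+1})$; hence $N_G(u)\cap\{v_j:j\in\mathbb{N}\}\subseteq\{v_0,\ldots,v_{2K_0-1}\}$, a finite set with a bound \emph{uniform} in $u$. (b) $N(v)\cap\{u_j:j\in\mathbb{N}\}$ is finite: the witness $u_{2k}$ added at even step $k$ automatically lies in the allowed set there and satisfies $u_{2k}\in N(v_{2k})$, so $u_{2k}\in N(v)$ would make $v$ available and forces $k<K_0$; analogously $u_{2k+1}$ (for $k\geq 1$) is added at odd step $k-1$ with $u_{2k+1}\in N(v_{2k-1})$, so $u_{2k+1}\in N(v)$ forces $k-1<K_0$.

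Now suppose for contradiction that some finite $F\subset V(G')\setminus\{v\}$ witnesses pseudo-isolation of $v$ in $G'$. Pick $M>2K_0$ also exceeding every index $j$ with $u_j\in N(v)$ (finite by (b)), and apply the non-pseudo-isolation of $v$ in $G$ to $F^*=F\cup\{u_j:j<M\}\cup\{v_j:j<M\}$; this yields $u\in N_G(v)\setminus F^*$ and $w\in N_G(u)\setminus(F^*\cup\{v\})$. By the choice of $M$ and (b), $u\notin\{u_j:j\in\mathbb{N}\}$, so $u\in B'\setminus F$. Applying (a) to this $u$ together with $M>2K_0$, $w\notin\{v_j:j\in\mathbb{N}\}$, so $w\in A'\setminus F$ and $w\neq v$. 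Thus $v-u-w$ is a distance-$2$ path in $G'\setminus F$, contradicting the choice of $F$. The main technical obstacle is the bookkeeping in (b)---correctly identifying at which step each $u_j$ is introduced as the connecting witness and verifying that membership in $N(v)$ at that step would indeed give $v$'s availability---after which the argument is routine pigeonhole combining the consequences of economicity with non-pseudo-isolation in $G$.
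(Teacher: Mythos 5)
Your proof is correct and rests on the same mechanism as the paper's own argument: the $<$-minimality of the economical choices, combined with the fact that $<$ has order type $\omega$, forces $v$ to be an eligible candidate at only finitely many construction steps, so any sufficiently late length-$2$ connection from $v$ through some $u_j$, or into some $v_j$ via a vertex of $B'$, would contradict the choice made at that step. The only difference is packaging — the paper argues by contradiction from an infinite family of distance-$2$ paths and a ``without loss of generality'' tail argument, whereas you argue directly by first isolating the two finiteness consequences and then enlarging the witnessing finite set — so this is not a genuinely different route.
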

\begin{proof}
Suppose, contrary to our claim, that $v\in A'$ is pseudo-isolated
in $G'(A',B')$. Since $v$ is not pseudo-isolated in $G$,
there exist injective sequences $b_0,b_1,\ldots$ in $B$ and
$a_0,a_1,\ldots$ in $A$ such that $vb_i$ and $b_ia_i$ are edges in $G(A,B)$. Since $v$ is  pseudo-isolated in $G'$,
there exists $i_0\in\mathbb{N}$ such that for all $i\geq i_0$, either
$b_i\in\{u_j:j\in\mathbb{N}\}$ or $a_i\in\{v_j:j\in\mathbb{N}\}$.
Without loss of generality we may assume that for each $i\geq i_0$,
if $b_i=u_j$  then $v<v_j,v_{j+2}$, and if $a_i=v_j$ then $v<v_{j+2}$.  
Thus, for any $i\geq i_0$, two cases are possible.

I. $b_i=u_j$ for some $j$ such that $v<v_j,v_{j+2}$.
But this contradicts $R$ being economical since in the construction of $R$, depending on the parity of $j$, we would have to add $v$ instead of $v_j$ or $v_{j+2}$, but $v\not\in R$.

II. $b_i\not\in\{u_j:j\in\mathbb{N}\}$ and $a_i=v_j$ for some $v<v_{j+2}$.
Again, this contradicts $R$ being economical since in the construction of $R$ we would have to add $v$ instead of  $v_{j+2}$, but $v\not\in R$.

This contradiction completes our proof.
\end{proof}

\begin{thm}\label{thm3.1}
    Let $G(A,B)$ be a bipartite infinite dHp graph with infinite countable sides $A$ and $B$. If the set of vertices of $A$ that are pseudo-isolated in $G$ is finite, then there is a collection of pairwise disjoint 2-regular subgraphs in $G$ that covers all vertices of $A$.
\end{thm}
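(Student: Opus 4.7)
The plan is to reduce to the pseudo-isolated-free setting of Lemma \ref{lemma3.1} by first absorbing the finitely many pseudo-isolated $A$-vertices into a finite union of disjoint finite cycles, and then iteratively constructing $<$-economical double rays to cover the rest of $A$.

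First I would let $P \subset A$ denote the finite set of pseudo-isolated $A$-vertices and choose a finite $Q \supseteq P$ with $|Q| \geq 2$ (taking $Q = P$ if $|P| \geq 2$, $Q = P \cup \{v\}$ for some $v \in A \setminus P$ if $|P| = 1$, and skipping this step with $Q = \emptyset$ when $P = \emptyset$). Applying Theorem \ref{thm3.2} to $Q$ produces a finite family $\mathcal{C}$ of pairwise disjoint finite cycles in $G$ with $V(\bigcup \mathcal{C}) \cap A = Q$. I then set $G_1 := G \setminus V(\bigcup \mathcal{C})$, which removes only finitely many vertices. A key observation is that non-pseudo-isolation in $G$ is preserved under removal of a finite set: if $v \in A \setminus Q$ were pseudo-isolated in $G_1$ with a finite witness $F' \subset V(G_1) \setminus \{v\}$, then $F := F' \cup V(\bigcup \mathcal{C})$ would witness pseudo-isolation of $v$ in $G$; since $v \notin P$, this is impossible. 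Hence $G_1$ has no pseudo-isolated $A$-vertices.

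Next I enumerate $A \setminus Q = \{v_i : i \in \mathbb{N}\}$ and take $<$ to be the well-order of type $\omega$ induced by this enumeration. I would then recursively build pairwise disjoint $<$-economical double rays $R_0, R_1, \ldots$: given $R_0, \ldots, R_{n-1}$, set $G_{n+1} := G_1 \setminus \bigcup_{i < n} V(R_i)$, which by $n$ applications of Lemma \ref{lemma3.1} still has no pseudo-isolated $A$-vertices. Let $v_{m_n}$ be the $<$-minimum $A$-vertex in $G_{n+1}$, and build $R_n$ in $G_{n+1}$ starting at $v_{m_n}$ by one-step-at-a-time prolongation in the spirit of Lemma \ref{lemma22}: at each stage, the current endpoint $v$ of the finite partial ray is not pseudo-isolated in $G_{n+1}$, and only finitely many further vertices have been used so far, so the definition of non-pseudo-isolation supplies a length-2 path $v b v'$ in $G_{n+1}$ with $b$ and $v'$ disjoint from the partial ray; I pick the $<$-minimum such $v'$ to ensure $<$-economy.

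Finally, the collection $\mathcal{C} \cup \{R_n : n \in \mathbb{N}\}$ is pairwise disjoint by construction and consists of 2-regular subgraphs (the finite cycles in $\mathcal{C}$ together with the double rays $R_n$), and it covers $A$: the cycles in $\mathcal{C}$ cover $Q$, and since each $R_n$ contains its starting vertex $v_{m_n}$, the index sequence $m_0 < m_1 < \cdots$ is strictly increasing, so $m_n \to \infty$ and every $v_k \in A \setminus Q$ is eventually covered by some $R_n$. The main obstacle I anticipate is bridging the gap between Theorem \ref{thm3.1}'s hypothesis (finitely many pseudo-isolated vertices) and Lemma \ref{lemma3.1}'s hypothesis (none); this is precisely what the extraction of the initial cycle family $\mathcal{C}$ is designed to resolve, together with the fact that non-pseudo-isolation persists under removal of finitely many vertices.
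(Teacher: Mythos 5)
Your proposal is correct and follows essentially the same route as the paper: first absorb the finitely many pseudo-isolated vertices of $A$ into a finite union of disjoint cycles via Theorem \ref{thm3.2}, observe that non-pseudo-isolation survives deletion of a finite vertex set, and then iteratively peel off $<$-economical double rays, invoking Lemma \ref{lemma3.1} at each stage to keep the remaining graph free of pseudo-isolated vertices and using $<$-minimality of the starting points to guarantee coverage. The only cosmetic difference is that the paper applies Theorem \ref{thm3.2} inside a finite induced dHp subgraph $G[A'\cup D]$ rather than directly to $G$, and it leaves the extension step and the degenerate cases $\vert P\vert\leq 1$ implicit where you spell them out.
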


\begin{proof}
Let $A'$ be the finite set of vertices of $A$ that are pseudo-isolated. Then there exists a finite $D\subset N^2(A')$ such that the induced subgraph $H=G[A'\cup D]$ is dHp. As $H$ is finite then by Theorem \ref{thm3.2} there is a collection of pairwise disjoint 2-regular subgraphs $L$ in $H$ that covers $A'$. Since $L$ is finite being a subset of $H$ for $G'=G\setminus L$ we have that no $a\in A\cap V(G')$ is pseudo-isolated in $G'$. Let $<$ be an order on $A\cap V(G')$ with order type $\omega$. Let $a_0$ be the $<$-minimal element and $R_0$ be a minimal double ray in $G'$ with starting point $a_0$. Set $G_0=G'\setminus R_0$. By Lemma \ref{lemma3.1} we have that no $a\in A\cap V(G_0)$ is pseudo-isolated.

Suppose that we have constructed a sequence $\langle R_i : i\leq n \rangle$ of mutually disjoint double rays in $G'$ such that $G_n\coloneqq G'\setminus \bigcup_{i\leq n} R_i$ has no elements of $A\cap V(G_i)$ which are pseudo-isolated in $G_n$. Then let $a_{n+1}$ be the $<$-minimal element of $A\cap V(G_n)$, $R_{n+1}$ be a $<$-economical double ray of $G_n$ with starting point $a_{n+1}$, and $G_{n+1}\coloneqq G_n\setminus R_{n+1}$. This completes our inductive construction. Since each an was chosen in a minimal way, we have $A\cap G'\subset \bigcup_{n\in\mathbb{N}}R_n$.

    


\end{proof}

\begin{cor}\label{cor1}
     Let $G(A,B)$ be a bipartite infinite dHp graph with infinite sides $A$ and $B$. If for each $v\in B$ either $N(v)$ is finite or there is a finite subset $F_v\subset A$ such that $N(v)=A\setminus F_v$ and $B'=\lbrace v\in B: N(v)=A\setminus F_v \text{ for some } F_v\in [A]^{\aleph_0}\rbrace$ is finite, then there is a collection of pairwise disjoint 2-regular subgraphs in $G$ covering all the vertices of $A$.
\end{cor}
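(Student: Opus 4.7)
The plan is to reduce to Theorem \ref{thm3.1} by showing that only finitely many vertices of $A$ are pseudo-isolated in $G$. Since $B'$ is finite and every $F_v$ ($v\in B'$) is finite, the set $A^*:=\bigcup_{v\in B'}F_v$ is a finite subset of $A$; call $v\in A$ \emph{nice} if $v\notin A^*$, equivalently if $v\in N(b)$ for every $b\in B'$. Since the non-nice vertices all lie in $A^*$, they contribute only finitely many pseudo-isolated vertices, so it suffices to prove that only finitely many nice vertices are pseudo-isolated.

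The first step is to characterize pseudo-isolation for nice $v$. Set
$$D(v):=\{w\in A\setminus\{v\}:\ N(v)\cap N(w)\cap (B\setminus B')\neq\emptyset\}.$$
I claim that a nice $v$ is pseudo-isolated if and only if $D(v)$ is finite. For the ``if'' direction, $F:=B'\cup D(v)$ works: in $G\setminus F$, any potential distance-$2$ path $v\to b\to w$ must have $b\in B\setminus B'$, which forces $w\in D(v)\subseteq F$, so $w\notin V(G\setminus F)$. For the ``only if'' direction, suppose $D(v)$ is infinite and let $F$ be any finite set. If $B'\not\subseteq F$, pick $b\in B'\setminus F$; since $v$ is nice, $b\in N(v)$, and $N(b)$ is cofinite in $A$, so $N(b)\setminus(F\cup\{v\})\neq\emptyset$ provides the required $w$. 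If $B'\subseteq F$, use $D(v)=\bigcup_{b\in N(v)\cap(B\setminus B')}(N(b)\setminus\{v\})$ with each $N(b)$ finite: were every $b\in (N(v)\cap(B\setminus B'))\setminus F$ to satisfy $N(b)\subseteq F\cup\{v\}$, this union would collapse to the finite set $F\cup\bigcup_{b\in (N(v)\cap(B\setminus B'))\cap F}N(b)$, contradicting $|D(v)|=\infty$. Hence some $b\in (N(v)\cap(B\setminus B'))\setminus F$ admits $w\in N(b)\setminus(F\cup\{v\})$, showing $v$ is not pseudo-isolated.

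Now assume for contradiction that infinitely many nice $v\in A$ have $D(v)$ finite. Since the relation ``$w\in D(v)$'' is symmetric in $v,w$ and each $D(v)$ is finite, a greedy selection produces $X=\{v_1,\ldots,v_{|B'|+2}\}$ of such vertices with $v_j\notin D(v_i)$ for all $i\neq j$; in particular no two vertices of $X$ share a common neighbor in $B\setminus B'$, so $N^2(X)\cap(B\setminus B')=\emptyset$. On the other hand, niceness of every $v_i$ gives $X\subseteq N(b)$ for every $b\in B'$, hence $B'\subseteq N^2(X)$. Combining, $N^2(X)=B'$, whence $|N^2(X)|=|B'|<|B'|+2=|X|$, contradicting the double Hall property.

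Consequently, only finitely many vertices of $A$ are pseudo-isolated in $G$, and Theorem \ref{thm3.1} yields the required collection of pairwise disjoint $2$-regular subgraphs covering $A$. The main obstacle is the characterization step: the finite/cofinite dichotomy on neighborhoods in $B$ is precisely what makes pseudo-isolation of a nice vertex equivalent to finiteness of $D(v)$, and once this is in place the dHp counting argument on the greedily constructed set $X$ completes the proof.
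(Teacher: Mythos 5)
Your proof is correct and follows essentially the same route as the paper's: both arguments reduce to Theorem \ref{thm3.1} by showing that only finitely many vertices of $A$ are pseudo-isolated, using the key observation that a pseudo-isolated vertex shares a common neighbour outside $B'$ with only finitely many other vertices of $A$, and then deriving a dHp violation from a set of more than $\vert B'\vert$ pairwise ``non-interacting'' pseudo-isolated vertices whose $N^2$ lands inside $B'$. Your greedy selection exploiting the symmetry of the relation $w\in D(v)$, together with a finite witness set $X$ of size $\vert B'\vert+2$, replaces the paper's intersection of the cofinite sets $A_{v_0}\cap\cdots\cap A_{v_n}$ and its infinite $X$, but the mathematical content is identical.
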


\begin{proof}
If $B'$ is finite, then we will prove that the set of pseudo-isolated vertices of $A$ in $G$ is finite. Suppose not. Denote by $A'$ the set of pseudo-isolated vertices of $A$ in $G$. If $v\in A'$ then there exists a finite set $S_v\subset V(G)\setminus \lbrace v\rbrace $ such that $\{w \in V(G\setminus S_v): d_{G\setminus S_v}(v,w) = 2\} =\emptyset$. Let $S_v^*=S_v\cap (B\setminus B')$, $S_A=S_v\cap A$. Since $S_v^*$ is finite, then $N_G(S_v^*)$ is finite. Thus, $N_G(S_v^*)\cup S_{A,v}\subset A$ is finite. Let $A_v=A\setminus (N_G(S_v^*)\cup S_{A,v} )$. 

Consider $v_0\in A'$. Since $A'$ is an infinite set, then $A'\cap A_{v_0}\neq \emptyset$. For all $w\in A'\cap A_{v_0}$, $N_G^2(\lbrace v_0, w\rbrace)\subset B'$. Let $v_1\in A'\cap A_{v_0}$. For each $n\in\mathbb{N}$, we have $A'\cap A_{v_0}\cap \cdots \cap A_{v_n}\neq \emptyset$, because $A'$ is an infinite set and each $A_{v_i}$ is cofinite. For all $w\in A'\cap A_{v_0}\cap \cdots \cap A_{v_n}$, $N_G^2(\lbrace v_0, w\rbrace)\subset B'$ and for all $v_i$ with $0\leq i< n$, $N_G^2(\lbrace v_i, v_n\rbrace)\subset B'$. By induction, there exists an infinite set $X=\lbrace v_i : i\in\mathbb{N}\rbrace\subset A'$ such that $N^2_G(X)\subset B'$, and this is a contradiction.
\end{proof}

Salia's conjecture does not hold for infinite countable bipartite graphs dHp $G(A,B)$ with infinite sides $A$ and $B$, where $A$ is non-locally finite and only finitely many vertices of $B$ have finite degree. Let us look at the counterexample that disproves the conjecture for this case.

\begin{ex}\label{thm3}
    There exists an infinite countable bipartite graph $G(A,B)$  with infinite sides $A,B$, where $A$ is non-locally finite only at two vertices, no vertices of $B$ have finite degree, which nonetheless has no perfect $A$-matching, and hence also does not satisfy the Salia conjecture. 
\end{ex}

\begin{proof}
Let $G(A,B)$ be the following infinite bipartite graph: $A=\lbrace v_i : i\in\mathbb{N}\rbrace$, $B=\lbrace u_i : i\in\mathbb{N}\rbrace$ and $V(G)=A\cup B$. Also define the set of edges of the graph $V$ as follows: $E(V)= \lbrace v_0u_i : i\in \mathbb{N}\rbrace\cup \lbrace v_1u_i : i\in \mathbb{N}\rbrace \cup \lbrace v_ju_i : j\in\mathbb{N} \text{ with } j>1 \text{ and } 0\leq i \leq j-1\rbrace$. Note that $A$ is non-locally finite only at two vertices $v_0, v_1$ and all vertices of $B$ have infinite degree.

\begin{figure}[ht]
    \centering
\begin{tikzpicture}
  \draw[white] (-0.8,-0.2) circle (0.01) node[above right,red] {\footnotesize $B$};
  \draw[fill=blue] (0,-2) circle (0.08) node[ right] {\footnotesize $v_0$};
  \draw[fill=blue] (1,-2) circle (0.08) node[ right] {\footnotesize $v_1$};
  \draw[fill=red] (0,0) circle (0.08) node[ right] {\footnotesize $u_0$};
  \draw[fill=red] (1,0) circle (0.08) node[ right] {\footnotesize $u_1$};
  \draw[fill=red] (2,0) circle (0.08) node[ right] {\footnotesize $u_2$};
  \draw[fill=red] (3,0) circle (0.08) node[ right] {\footnotesize $u_3$};
  \draw[fill=red] (4,0) circle (0.08) node[right] {\footnotesize $u_4$};
  \draw[fill=red] (5,0) circle (0.08) node[right] {\footnotesize $u_5$};
  \draw[fill=black] (5.5,0) circle (0.01) node[above right] {\footnotesize };
  \draw[fill=black] (5.7,0) circle (0.01) node[above right] {\footnotesize };
  \draw[fill=black] (5.9,0) circle (0.01) node[above right] {\footnotesize };
    \draw[fill=black] (3.5,-1) circle (0.01) node[above right] {\footnotesize };
  \draw[fill=black] (3.7,-1) circle (0.01) node[above right] {\footnotesize };
  \draw[fill=black] (3.9,-1) circle (0.01) node[above right] {\footnotesize };
  
  \draw[white] (-0.8,1.8) circle (0.01) node[above right,blue] {\footnotesize $A$};
  \draw[fill=blue] (0,2) circle (0.08) node[above right] {\footnotesize $v_2$};
  \draw[fill=blue] (1,2) circle (0.08) node[above right] {\footnotesize $v_3$};
  \draw[fill=blue] (2,2) circle (0.08) node[above right] {\footnotesize $v_4$};
  \draw[fill=blue] (3,2) circle (0.08) node[above right] {\footnotesize $v_5$};
  \draw[fill=blue] (4,2) circle (0.08) node[above right] {\footnotesize $v_6$};
  \draw[fill=black] (4.3,2) circle (0.01) node[above right] {\footnotesize };
  \draw[fill=black] (4.5,2) circle (0.01) node[above right] {\footnotesize };
  \draw[fill=black] (4.7,2) circle (0.01) node[above right] {\footnotesize };

  \draw[-] (0,0) -- (0,-2);
  \draw[-] (1,0) -- (0,-2);
  \draw[-] (2,0) -- (0,-2);
  \draw[-] (3,0) -- (0,-2);
  \draw[-] (4,0) -- (0,-2);
  \draw[-] (5,0) -- (0,-2);

  \draw[-] (0,0) -- (1,-2);
  \draw[-] (1,0) -- (1,-2);
  \draw[-] (2,0) -- (1,-2);
  \draw[-] (3,0) -- (1,-2);
  \draw[-] (4,0) -- (1,-2);
  \draw[-] (5,0) -- (1,-2);

  \draw[-] (0,2) -- (0,0);
  \draw[-] (1,2) -- (0,0);
  \draw[-] (2,2) -- (0,0);
  \draw[-] (3,2) -- (0,0);
  \draw[-] (4,2) -- (0,0);
  \draw[-] (0,2) -- (1,0);
  \draw[-] (1,2) -- (1,0);
  \draw[-] (2,2) -- (1,0);
  \draw[-] (3,2) -- (1,0);
  \draw[-] (4,2) -- (1,0);

  \draw[-] (1,2) -- (2,0);
  \draw[-] (2,2) -- (2,0);
  \draw[-] (3,2) -- (2,0);
  \draw[-] (4,2) -- (2,0);
  
  \draw[-] (2,2) -- (3,0);
  \draw[-] (3,2) -- (3,0);
  \draw[-] (4,2) -- (3,0);

  \draw[-] (3,2) -- (4,0);
  \draw[-] (4,2) -- (4,0);
  \draw[-] (4,2) -- (5,0);
  
\end{tikzpicture}
    \caption{Counterexample to Salia's conjecture for infinite graphs}
    \label{fig3}
\end{figure}
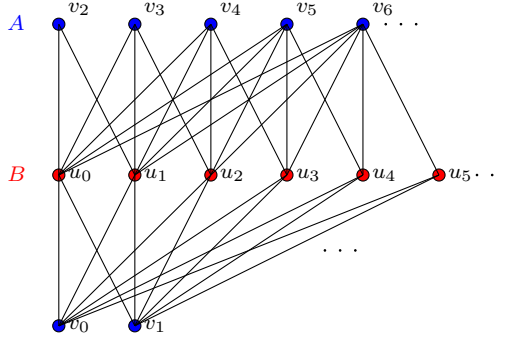

We claim that $G$ is a dHp graph. Instead, let $X\in [A]^{<\aleph_0}$, $X\setminus \lbrace v_0, v_1 \rbrace = \lbrace v_{k_1}, \dots , v_{k_n}\rbrace$ with $k_1\leq \cdots \leq k_n$, and note that $k_i\geq i+1$ for all $1\leq i \leq k$.   
 
$N^2(X)\supset \lbrace u_{k_1-2}, \cdots u_{k_{n-1} -2},$ $ u_{k_{n-1}-1}\rbrace $, and therefore $\vert N^2(X)\vert \geq \vert X\vert$. If $X$ contains exactly one of $v_0$ or $v_1$, then $N^2(X)=\lbrace u_0, u_1, \dots , u_{k_n-1}\rbrace$, and hence $\vert N^2(X)\vert =k_n\geq n+1 \geq \vert X\vert$. If $\lbrace v_0, v_1 \rbrace \subset X$, then $N^2(X)$ is infinite.

Suppose by contradiction that $G$ admits a perfect $A$-matching. Let us denote it by $M$. Then, there are $l_0, l_1\in \mathbb{N}$ such that $v_0u_{l_0}, v_1u_{l_1}\in M$. Set $k_1=max\lbrace l_0, l_1 \rbrace +1$ and note that, if $X=\lbrace v_i : 2\leq i\leq k_1+2\rbrace$, then $N(X)=\lbrace u_i : 0\leq i \leq k_1\rbrace$. Therefore, for all $v_i\in X$ we have that there must be $u_{j_i}\in N(X)$ such that $v_iu_{j_i}\in M$. Thus, $Y=\lbrace u_{j_i}: 2\leq i \leq k_1+2 \rbrace \cup \lbrace u_{l_0},u_{l_1}\rbrace \subset \lbrace u_i : 0\leq i \leq k_1\rbrace$. However, $\vert Y\vert = k_1+2 \geq k_1+1 =\vert \lbrace u_i : 0\leq i \leq k_1\rbrace \vert$, which is a contradiction. Therefore, $G$ does not admit a perfect $A$-matching and, therefore, $G$ does not admit a collection of cycles that covers all vertices of $A$.
\end{proof}

Before presenting the next result, let us introduce some definitions and notations on order trees.

\begin{defn}
A partially ordered set $(T,\leq)$ is called an \textbf{order tree} if it has a unique minimal element (called the \textbf{root}) and all subsets of the form $\lceil t \rceil = \lceil t \rceil_T := \lbrace t' \in T :t'\leq t\rbrace$
are well-ordered. Write  $\lfloor t \rfloor = \lfloor t \rfloor_T  := \lbrace t' \in T : t\leq t'\rbrace$. For subsets $X\subset T$ we abbreviate $\lfloor X\rfloor :=\bigcup_{t\in X}\lfloor t\rfloor$ and $\lceil X\rceil:=\bigcup_{t\in X}\lceil t\rceil$. A maximal chain in $T$ is called a \textbf{branch}. The set $T^i$ of all points at height $i$ is the $i$th \textbf{level} of $T$, and we write $$T^{<i} := \bigcup\lbrace T^j: j < i\rbrace \text{, } T^{\leq i} := \bigcup\lbrace T^j:j \leq i\rbrace, $$ $$ T^{>i} := \bigcup\lbrace T^j:j > i\rbrace \text{ and } T^{\geq i} := \bigcup\lbrace T^j:j \geq i\rbrace.$$ 
    
\end{defn}

\begin{thm}\label{thm4}
    Let $G(A,B)$ be a bipartite infinite countable dHp graph with infinite sides $A$ and $B$, and for each $v\in B$ either $deg(v)=2$ or $N(v)=A$. If Salia's conjecture is true for finite graphs, then there exists a cycle in $G$ covering all the vertices of $A$.
\end{thm}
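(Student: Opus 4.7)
The plan is to split into two cases based on $B_\infty := \{v \in B : N(v) = A\}$, recalling that the complement $B_2 := B \setminus B_\infty$ consists entirely of degree-$2$ vertices. If $B_\infty$ is infinite, the infinite cycle is constructed directly: enumerate $A = \{a_i : i \in \mathbb{Z}\}$ bijectively, pick pairwise distinct $b_i \in B_\infty$, and form the double ray $R = \ldots a_{-1}b_{-1}a_0b_0a_1b_1a_2\ldots$. For any finite $F \subset V(G)$, pick any $b^* \in B_\infty \setminus F$ (possible since $B_\infty \setminus F$ is infinite); since $N(b^*) = A$, the component of $b^*$ in $G \setminus F$ contains every vertex of $A \setminus F$, and hence contains tails of both ends of $R$. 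So the two tails are equivalent ends, and $R$ is an infinite cycle covering $A$.

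In the main case $|B_\infty| = k < \infty$, if $k = 0$ then $B = B_2$ is locally finite of degree $2$, and Lemma \ref{lemma22} yields an infinite cycle covering $A$ directly; so assume $k \geq 1$ and fix $b^* \in B_\infty$. Enumerate $A = \{a_i : i \in \mathbb{N}\}$ and for each $n$ consider the finite subgraph $G_n := G[A_n \cup B_\infty \cup \{b \in B_2 : N(b) \subseteq A_n\}]$ with $A_n := \{a_0,\ldots,a_n\}$. Using the inclusion $N^2_G(X) \subseteq B_\infty \cup \{b \in B_2 : N(b) \subseteq X\}$ for $X \subseteq A_n$ with $|X| \geq 2$, a short check shows $G_n$ is a finite dHp graph in which every $B$-vertex has degree either $2$ or $|A_n|$. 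By the assumed finite Salia conjecture (or by Theorem \ref{thm2.6} directly), each $G_n$ contains a cycle $C_n$ covering $A_n$; I would arrange that each $C_n$ passes through $b^*$ and split at $b^*$ to obtain a finite $A_n$-path $P_n$ in $G_n$ with both endpoints adjacent to $b^*$.

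The central combinatorial task is to combine the paths $(P_n)$ into a double ray $R$ in $G$ that forms an infinite cycle covering $A$. I propose to take an $F$-limit of $(P_n)$ along a non-principal ultrafilter, obtaining a subgraph in which every $v \in A$ has degree at most $2$. The main obstacle is that a vertex $a \in A$ with infinite $B_2$-degree may have the pair of $P_n$-edges at $a$ varying across infinitely many possibilities, so the ultrafilter cannot stabilize it by mere pigeonhole. To circumvent this, I would employ the $<$-economical extension framework of Lemma \ref{lemma3.1}: impose a well-ordering of type $\omega$ on $A$ and construct the $P_n$ inductively so that each extension at an endpoint of $P_{n-1}$ adds the $<$-minimal admissible vertex of $A$ together with an appropriate intermediate $B$-vertex; this forces each $a \in A$ to have a stable pair of incident edges across cofinitely many $n$, producing a well-defined double ray $R$ through $b^*$ that covers $A$. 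Finally, I would verify that the two tails of $R$ are equivalent ends: for finite $F \subset V(G)$ with $B_\infty \not\subseteq F$, use $b \in B_\infty \setminus F$ as in Case 1; for $F \supseteq B_\infty$ (the hard subcase), use the dHp condition to show, as in the proof of Corollary \ref{cor1}, that the set of pseudo-isolated vertices of $A$ is finite, which yields sufficient $B_2$-connectivity on $A \setminus F$ to place both tails of $R$ in the same component of $G \setminus F$. The hardest step is this final end-equivalence verification when $F \supseteq B_\infty$, since it requires extracting end-connectivity from the dHp property purely through $B_2$-edges.
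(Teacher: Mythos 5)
Your Case 1 and the reduction to finite subgraphs satisfying the hypotheses of Theorem \ref{thm2.6} are fine, but the main construction has a structural gap: the target object is wrong. A single double ray $R$ whose two tails lie in the same end cannot, in general, cover $A$. The paper's Claim 2 shows only $\vert\Omega(G)\vert\leq\vert B^{\infty}\vert$, and graphs with several ends genuinely occur under your hypotheses: take $A=A^{+}\sqcup A^{-}$ with a degree-$2$ vertex $b_{ij}$ joining every pair inside $A^{+}$ and every pair inside $A^{-}$, plus $B^{\infty}=\{b^{*}_1,b^{*}_2\}$; this is dHp, satisfies the degree condition, and has two ends separated by $B^{\infty}$. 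Any double ray covering $A$ must have one tail on each side, so its tails lie in \emph{distinct} ends and it is an arc, not a circle. The covering cycle here is necessarily a union of two double rays glued at both ends --- which is exactly what the paper builds: a finite cycle $C$ obtained from the finite Salia conjecture on a truncation $T^{<n_1}$ of a normal spanning tree, with a pair of rays attached along each of the finitely many infinite branches $L_j$. Your plan never establishes (or uses) the number of ends, and your final ``end-equivalence verification'' is attempting to prove something that is false in the multi-ended case.

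The stabilization step is also not actually resolved. If the $P_n$ are extracted from cycles $C_n$ supplied by the finite Salia conjecture, you have no control over which two $B$-neighbors an infinite-degree vertex $a\in A$ receives in $C_n$, and no ultrafilter argument can stabilize an infinite set of options --- you correctly identify this, but the proposed fix conflates two incompatible constructions. Either the $P_n$ come from finite Salia (then they are not nested and do not stabilize), or they are built by greedy $<$-minimal extension as in Lemma \ref{lemma3.1} (then the finite Salia hypothesis is unused, and you owe an argument that the greedy ray can always be extended and eventually absorbs every vertex of $A$ --- this is precisely what the pseudo-isolated machinery of Section \ref{sec3} addresses, and even there the output is an infinite \emph{family} of disjoint double rays, not a single one covering $A$). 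The paper avoids this entirely: it never takes a limit of the finite cycles, but fixes one finite cycle $C$ on $T^{<n_1}$ and then performs explicit edge surgery along each branch, using Claim 3 ($\lfloor v\rfloor\cap A$ infinite for $v\notin\mathcal{A}$) to guarantee each rerouting step is possible. You would need to import that normal-spanning-tree structure, or something equivalent, to make your outline work.
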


\begin{proof}
    If $B^{\infty}=\lbrace v\in B: N(v)=A\rbrace$ is infinite then $G$ contains a cycle that trivially covers $A$. So let us consider the case when $B^{\infty}$ is finite. For each $v\in A$ let $A_v=\lbrace w\in A : N^2(\lbrace v,w\rbrace)\setminus B^{\infty} \neq \emptyset \rbrace$. Consider $\mathcal{A}=\lbrace v\in A : \vert A_v\vert <\aleph_0 \rbrace$. Since $B^{\infty}$ is finite, then by the double Hall property, $\mathcal{A}$ is finite. Let $T$ be a normal spanning tree of $G$. 
    
   \vspace{0.3cm}

    \textbf{Claim 1:} $A\cap T^{\leq m}$ is finite for every $m\in\mathbb{N}$.

    \begin{proof}
        For $t\in V(T)$ we shall denote by $S_k= \lbrace t'\in V(T): t<t' \text{ and } k=\vert \lbrace r\in V(T): t<r<t'\rbrace \vert\rbrace$, for every $k\in\mathbb{N}$. Thus, $S_0(t)$ is the family of all immediate successors of $t$ in $T$. By König's lemma it suffices to show that if $a\in A\cap V(T)$, then $S_1(a)$ is finite. For this we shall first show that if $b\in B$, then $S_0(b)$ is finite. Indeed, since elements of $S_0(b)$ are incomparable in $T$ and $T$ is normal, we get $N^2_G(S_0(b))\subset \lceil b\rceil$, and hence $\vert S_0(b)\vert \leq \vert \lceil b\rceil\vert$ by the dHp.

        Now suppose that, contrary to our claim, there exists $a\in A$ such that $S_1(a)$ is infinite. Since $S_1(a)=\bigcup_{b\in S_0(a)}S_0(b)$ and each $S_0(b)$ is finite, we conclude that $B_1\coloneqq \lbrace b\in S_0(a): S_0(b)\neq \emptyset\rbrace$ is infinite. For every $b\in B_1$, let us pick $a(b)\in S_0(b)$ and note that $N^2_G(\lbrace a(b): b\in B_1\rbrace)\subset \lceil a\rceil$ since any two different $a(b_0)$ and $a(b_1)$, $b_0\neq b_1$ in $B_1$, are incomparable in $T$. But this contradicts the dHp because $\lbrace a(b): b\in B_1\rbrace$ is infinite.
    \end{proof}

    As a consequence we get that $\widetilde{B}=\lbrace b\in B: S_0(b)\neq \emptyset\rbrace$ also has the property $\vert \widetilde{B}\cap T^{\leq m}\vert \leq \aleph_0$ for every $m\in\mathbb{N}$. Note that if $b\in B\cap T^{\leq m}$ and $b\notin \widetilde{B}$, then $S_0(b)\neq \emptyset$, and hence $b$ is not a neighbor in $G$ of any $a\in A\cap T^{>m}$. This will be crucial later in the proof.

\vspace{0.3cm}

    \textbf{Claim 2:} $\vert \Omega (G)\vert \leq \vert B^{\infty}\vert$.
\begin{proof}
Suppose by contradiction that $\vert \Omega(G)\vert > \vert B^{\infty}\vert = n$. Let $\varepsilon_0, \cdots, \varepsilon_n\in \Omega (G)$ with $\varepsilon_i \neq\varepsilon_j$ for all $i\neq j$. By definition of end, there exists a finite set $F\subset V(G)$ such that if $C(F,\varepsilon_i)$ is the connected component of $G\setminus F$ in which $\varepsilon_i$ lives then $ C(F,\varepsilon_i)\neq C(F,\varepsilon_j)$ for all $i\neq j$. That is, $F$ separates the ends $\varepsilon_0, \cdots \varepsilon_n$. Hence $B^{\infty}\subset F$ and $N((F\setminus B^{\infty})\cap B)$ is finite, since each vertex of $(F\setminus B^{\infty })\cap B$ has degree 2. Thus, as each component $C(F, \varepsilon)$ has infinitely many vertices of $A$, for each $i=0, \cdots , n$ there is a vertex $v_i\in V(C(F, \varepsilon))\cap A$ such that $v_i\notin N((F\setminus B^{\infty})\cap B)$. Therefore, $N^2(\lbrace v_i : 0\leq i \leq n\rbrace)= B^{\infty}$. On the other hand, $\vert N^2(\lbrace v_i : 0\leq i \leq n\rbrace)\vert =n+1 > n =\vert B^{\infty}\vert$, which is a contradiction.    
\end{proof}    

\textbf{Claim 3:} For each $v\in A\setminus \mathcal{A}$, $\lfloor v\rfloor\cap A $ is infinite.
\begin{proof}
    Since $v\notin \mathcal{A}$, $A_v$ is infinite, that is, there are infinitely many vertices $w\in A$ such that $N^2(\lbrace v, w\rbrace)\setminus B^ {\infty}\neq \emptyset$. For every $w\in A_v$ fix $b(w)\in B\setminus B^{\infty}$ connected to $v$ and $w$. Then $B(v)=\lbrace b(w): w\in A_v\rbrace$ is infinite, since each $b\notin B^{\infty}$ can serve as $b(w)$ for at most 2 $w\in A_v$. Since each $b(w)\in B(v)$ is comparable in $T$ with $v$, and $\lceil v\rceil$ is finite, $v<_T b(w)$ for all but finitely many $b(w)\in B(v)$. Let us note that if $v<_Tb(w)$, then $v$ is comparable in $T$ with $w$, since $w$ is comparable with $b(w)$. Since $\lceil v\rceil$ is finite, by dHp we have $v<_T w$ for all but finitely many $w\in A_v$.
\end{proof}

\textbf{Claim 4:} There exists $n_0\in \mathbb{N}$ such that every $v\in (T\setminus T^{\leq n_0})\cap A$ is on exactly one infinite branch of $T $.

\begin{proof}
Indeed, since $T$ is a normal spanning tree, then the infinite branches of the tree are in bijection with the ends of $G$. Since $B^{\infty}\cup \mathcal{A}\cup \bigcup_{v\in\mathcal{A}}A_v$ is finite, then there exists $n_0'\in\mathbb{N}$ such that $B^{\infty}\cup \mathcal{A}\cup \bigcup_{v\in\mathcal{A}}A_v\subset T^{< n_0'}$. Since $G$ has only finitely many ends, there exists $n_0\in\mathbb{N}$, $n_0'<n_0$, such that each infinite component of $T\setminus T^{\leq n_0}$ contains only a single infinite branch. Let $v\in (T\setminus T^{\leq n_0})\cap A$. So $v\notin \mathcal{A}$. By Claim 3, $\lfloor v\rfloor \cap A$ is infinite, so it must be in a infinite branch. Furthermore, as $T^{\leq n_0}$ separates the ends of $G$, $v$ belongs to a single infinite branch.
\end{proof}

Let $\lbrace v_j^{n_0}: j<m\rbrace$ be an enumeration of $A\cap T^{n_0}$. By Claim 4 we have $m=\vert \Omega (G)\vert$. For every $j<m$ let $L_j$ be the unique infinite branch trough $T$ containing $v_j^{n_0}$. Let $n_1>n_0$ be big enough so that 

\begin{equation}\label{eq1}
   \vert A\cap L_j \cap (T^{<n_1}\setminus T^{\leq n_0})\vert > 2 \vert A \cap T^{<n_0}\vert + 2 \vert \widetilde{B}\cap T^{<n_0}\vert = 2 \vert T^{<n_0}\vert 
\end{equation}
for every $j<m$. Such an $n_1$ exists because the unique element of $L_j\cap T^k$ belongs to $A$ if and only if $k-(n_0-1)$ is odd (remember that $T^{n_0-1}\subset B$). We can assume that $n_1-n_0$ is even, so $T^{n_1-1}\subset B$. It follows that $G(A,B)$ restricted to $T^{<n_1}$ satisfies the dHp. Indeed, if $b\in B\setminus T^{<n_1}$, then $b\in B\setminus T^{\leq n_1}$ and hence for the immediate predecessor $a$ of $b$ in $T$ we have $a\in A \setminus T^{<n_1}$. Thus, $b\notin N^2_G(A\cap T^{<n_1})$, which yields $N^2_G(A\cap T^{< n_1})\subset B\cap T^{<n_1}$ which proves dHp of $G$ restricted to $T^{<n_1}$.

By out assumption there exists a cycle $C$ in $G$ restricted to $T^{<n_1}$ covering $A\cap T^{<n_1}$. Since every element of $A\cap C$ has exactly two other elements of $A\cap C$ at distance 2 in $C$ and exactly two elements of $B\cap C$ as neighbors in $C$, there exists $w(j)\in A\cap L_j\cap (T^{<n_1}\setminus T^{\leq n_0})$ having no elements of $A\cap T^{< n_0}$ at distance 2 in $C$, and no elements of $B\cap T^{<n_0}$ as neighbors in $C$. This is a direct consequence of (\ref{eq1}). Let $u(j)\in B$ and $v(j)\in A$ be such that $w(j)u(j)v(j)\subset C$. Thus, $u(j), v(j)\notin T^{< n_0}$, and hence $v(j), w(j)\notin \mathcal{A}\cup \bigcup_{v\in\mathcal{A}}A_v$ and $u(j)\notin B^{\infty}$. Since $u(j)$ has exactly two neighbors in $G$, these must be $w(j)$ and $v(j)$, and one of them, say $v(j)$, is the immediate predecessor of $u(j)$ in $T$. By the normality of $T$ we know that $w(j)$ is also comparable in $T$ with $u(j)$, and hence also it is comparable with $v(j)$. Let us fix $j<m$ and as long as we work below within $\lfloor v_j^{n_0}\rfloor$, we shall simply write $w, u, v$ instead of $w(j), u(j) $ and $v(j)$. Two cases are possible:

\begin{figure}[ht]
    \centering

\tikzset{every picture/.style={line width=0.75pt}} 

\begin{tikzpicture}[x=0.65pt,y=0.65pt,yscale=-0.7,xscale=0.7]

\draw    (151.09,15.6) -- (151.04,29.49) ;
\draw [shift={(151.1,12.6)}, rotate = 90.2] [fill={rgb, 255:red, 0; green, 0; blue, 0 }  ][line width=0.08]  [draw opacity=0] (8.93,-4.29) -- (0,0) -- (8.93,4.29) -- cycle    ;

\draw    (151.04,29.49) -- (150.98,46.37) ;
\draw [shift={(151.04,29.49)}, rotate = 90.2] [color={rgb, 255:red, 0; green, 0; blue, 0 }  ][fill={rgb, 255:red, 0; green, 0; blue, 0 }  ][line width=0.75]      (0, 0) circle [x radius= 3.35, y radius= 3.35]   ;

\draw    (150.98,46.37) -- (150.92,63.26) ;
\draw [shift={(150.98,46.37)}, rotate = 90.2] [color={rgb, 255:red, 0; green, 0; blue, 0 }  ][fill={rgb, 255:red, 0; green, 0; blue, 0 }  ][line width=0.75]      (0, 0) circle [x radius= 3.35, y radius= 3.35]   ;

\draw    (150.92,63.26) -- (150.86,80.15) ;
\draw [shift={(150.92,63.26)}, rotate = 90.2] [color={rgb, 255:red, 0; green, 0; blue, 0 }  ][fill={rgb, 255:red, 0; green, 0; blue, 0 }  ][line width=0.75]      (0, 0) circle [x radius= 3.35, y radius= 3.35]   ;
\draw    (150.86,80.15) -- (150.8,97.04) ;
\draw [shift={(150.86,80.15)}, rotate = 90.2] [color={rgb, 255:red, 0; green, 0; blue, 0 }  ][fill={rgb, 255:red, 0; green, 0; blue, 0 }  ][line width=0.75]      (0, 0) circle [x radius= 3.35, y radius= 3.35]   ;
\draw    (150.8,97.04) -- (150.74,113.92) ;
\draw [shift={(150.8,97.04)}, rotate = 90.2] [color={rgb, 255:red, 0; green, 0; blue, 0 }  ][fill={rgb, 255:red, 0; green, 0; blue, 0 }  ][line width=0.75]      (0, 0) circle [x radius= 3.35, y radius= 3.35]   ;
\draw    (150.74,113.92) -- (150.68,130.81) ;
\draw [shift={(150.74,113.92)}, rotate = 90.2] [color={rgb, 255:red, 0; green, 0; blue, 0 }  ][fill={rgb, 255:red, 0; green, 0; blue, 0 }  ][line width=0.75]      (0, 0) circle [x radius= 3.35, y radius= 3.35]   ;
\draw    (150.68,130.81) -- (150.62,147.7) ;
\draw [shift={(150.68,130.81)}, rotate = 90.2] [color={rgb, 255:red, 0; green, 0; blue, 0 }  ][fill={rgb, 255:red, 0; green, 0; blue, 0 }  ][line width=0.75]      (0, 0) circle [x radius= 3.35, y radius= 3.35]   ;
\draw  [dash pattern={on 0.84pt off 2.51pt}]  (150.62,147.7) -- (150.53,173.03) ;
\draw [shift={(150.62,147.7)}, rotate = 90.2] [color={rgb, 255:red, 0; green, 0; blue, 0 }  ][fill={rgb, 255:red, 0; green, 0; blue, 0 }  ][line width=0.75]      (0, 0) circle [x radius= 3.35, y radius= 3.35]   ;
\draw  [dash pattern={on 0.84pt off 2.51pt}]  (150.56,164.59) -- (150.5,181.47) ;
\draw [shift={(150.5,181.47)}, rotate = 90.2] [color={rgb, 255:red, 0; green, 0; blue, 0 }  ][fill={rgb, 255:red, 0; green, 0; blue, 0 }  ][line width=0.75]      (0, 0) circle [x radius= 3.35, y radius= 3.35]   ;
\draw    (150.5,181.47) -- (150.44,198.36) ;
\draw [shift={(150.5,181.47)}, rotate = 90.2] [color={rgb, 255:red, 0; green, 0; blue, 0 }  ][fill={rgb, 255:red, 0; green, 0; blue, 0 }  ][line width=0.75]      (0, 0) circle [x radius= 3.35, y radius= 3.35]   ;
\draw    (150.44,198.36) -- (150.38,215.25) ;
\draw [shift={(150.44,198.36)}, rotate = 90.2] [color={rgb, 255:red, 0; green, 0; blue, 0 }  ][fill={rgb, 255:red, 0; green, 0; blue, 0 }  ][line width=0.75]      (0, 0) circle [x radius= 3.35, y radius= 3.35]   ;
\draw    (150.38,215.25) -- (150.32,232.13) ;
\draw [shift={(150.32,232.13)}, rotate = 90.2] [color={rgb, 255:red, 0; green, 0; blue, 0 }  ][fill={rgb, 255:red, 0; green, 0; blue, 0 }  ][line width=0.75]      (0, 0) circle [x radius= 3.35, y radius= 3.35]   ;
\draw [shift={(150.38,215.25)}, rotate = 90.2] [color={rgb, 255:red, 0; green, 0; blue, 0 }  ][fill={rgb, 255:red, 0; green, 0; blue, 0 }  ][line width=0.75]      (0, 0) circle [x radius= 3.35, y radius= 3.35]   ;
\draw  [dash pattern={on 0.84pt off 2.51pt}]  (72.9,231.85) -- (234.51,232.02) ;
\draw [color={rgb, 255:red, 248; green, 7; blue, 7 }  ,draw opacity=1 ]   (124.99,113.75) .. controls (127.3,113.28) and (128.68,114.2) .. (129.15,116.51) .. controls (129.62,118.82) and (131.01,119.75) .. (133.32,119.28) .. controls (135.63,118.81) and (137.02,119.74) .. (137.48,122.05) .. controls (137.95,124.36) and (139.34,125.28) .. (141.65,124.81) .. controls (143.96,124.34) and (145.35,125.27) .. (145.81,127.58) .. controls (146.28,129.89) and (147.67,130.81) .. (149.98,130.34) -- (150.68,130.81) -- (150.68,130.81) ;
\draw [shift={(124.99,113.75)}, rotate = 33.59] [color={rgb, 255:red, 248; green, 7; blue, 7 }  ,draw opacity=1 ][fill={rgb, 255:red, 248; green, 7; blue, 7 }  ,fill opacity=1 ][line width=0.75]      (0, 0) circle [x radius= 3.35, y radius= 3.35]   ;
\draw    (127.05,29.67) -- (150.98,46.37) ;
\draw [shift={(127.05,29.67)}, rotate = 34.91] [color={rgb, 255:red, 0; green, 0; blue, 0 }  ][fill={rgb, 255:red, 0; green, 0; blue, 0 }  ][line width=0.75]      (0, 0) circle [x radius= 3.35, y radius= 3.35]   ;
\draw [color={rgb, 255:red, 254; green, 4; blue, 4 }  ,draw opacity=1 ]   (124.99,113.75) .. controls (126.43,115.77) and (126.23,117.58) .. (124.38,119.19) .. controls (122.55,120.79) and (122.41,122.31) .. (123.96,123.76) .. controls (125.49,125.57) and (125.36,127.24) .. (123.59,128.77) .. controls (121.83,130.3) and (121.74,132.09) .. (123.32,134.14) .. controls (124.95,135.71) and (124.92,137.22) .. (123.23,138.69) .. controls (121.56,140.53) and (121.58,142.33) .. (123.29,144.09) .. controls (125.02,145.72) and (125.1,147.26) .. (123.52,148.69) .. controls (122.03,150.82) and (122.2,152.59) .. (124.03,154.01) .. controls (125.89,155.3) and (126.15,156.96) .. (124.8,159) .. controls (123.46,160.8) and (123.8,162.36) .. (125.82,163.69) .. controls (127.87,164.84) and (128.34,166.44) .. (127.24,168.49) .. controls (126.24,170.62) and (126.82,172.12) .. (128.98,172.99) .. controls (131.25,173.91) and (132,175.43) .. (131.23,177.55) .. controls (130.72,179.99) and (131.6,181.42) .. (133.86,181.83) .. controls (136.24,182.29) and (137.25,183.63) .. (136.88,185.85) .. controls (136.65,188.15) and (137.79,189.42) .. (140.3,189.66) .. controls (142.59,189.59) and (143.76,190.71) .. (143.81,193.02) .. controls (144,195.39) and (145.28,196.47) .. (147.67,196.26) -- (150.44,198.36) ;
\draw    (474.75,14.3) -- (474.64,28.19) ;
\draw [shift={(474.77,11.3)}, rotate = 90.44] [fill={rgb, 255:red, 0; green, 0; blue, 0 }  ][line width=0.08]  [draw opacity=0] (8.93,-4.29) -- (0,0) -- (8.93,4.29) -- cycle    ;
\draw    (474.64,28.19) -- (474.51,45.07) ;
\draw [shift={(474.64,28.19)}, rotate = 90.44] [color={rgb, 255:red, 0; green, 0; blue, 0 }  ][fill={rgb, 255:red, 0; green, 0; blue, 0 }  ][line width=0.75]      (0, 0) circle [x radius= 3.35, y radius= 3.35]   ;
\draw [color={rgb, 255:red, 245; green, 5; blue, 5 }  ,draw opacity=1 ]   (474.51,45.07) .. controls (476.16,46.75) and (476.15,48.42) .. (474.47,50.07) .. controls (472.79,51.72) and (472.78,53.39) .. (474.43,55.07) .. controls (476.08,56.75) and (476.07,58.42) .. (474.39,60.07) -- (474.38,61.96) -- (474.38,61.96) ;
\draw [shift={(474.51,45.07)}, rotate = 90.44] [color={rgb, 255:red, 245; green, 5; blue, 5 }  ,draw opacity=1 ][fill={rgb, 255:red, 245; green, 5; blue, 5 }  ,fill opacity=1 ][line width=0.75]      (0, 0) circle [x radius= 3.35, y radius= 3.35]   ;
\draw [color={rgb, 255:red, 240; green, 7; blue, 7 }  ,draw opacity=1 ]   (474.38,61.96) .. controls (476.03,63.64) and (476.02,65.31) .. (474.34,66.96) .. controls (472.66,68.61) and (472.65,70.28) .. (474.3,71.96) .. controls (475.95,73.64) and (475.94,75.31) .. (474.26,76.96) -- (474.25,78.85) -- (474.25,78.85) ;
\draw [shift={(474.38,61.96)}, rotate = 90.44] [color={rgb, 255:red, 240; green, 7; blue, 7 }  ,draw opacity=1 ][fill={rgb, 255:red, 240; green, 7; blue, 7 }  ,fill opacity=1 ][line width=0.75]      (0, 0) circle [x radius= 3.35, y radius= 3.35]   ;
\draw [color={rgb, 255:red, 252; green, 10; blue, 10 }  ,draw opacity=1 ]   (474.25,78.85) .. controls (475.9,80.53) and (475.89,82.2) .. (474.21,83.85) .. controls (472.53,85.5) and (472.52,87.17) .. (474.17,88.85) .. controls (475.82,90.53) and (475.81,92.2) .. (474.13,93.85) -- (474.12,95.73) -- (474.12,95.73) ;
\draw [shift={(474.25,78.85)}, rotate = 90.44] [color={rgb, 255:red, 252; green, 10; blue, 10 }  ,draw opacity=1 ][fill={rgb, 255:red, 252; green, 10; blue, 10 }  ,fill opacity=1 ][line width=0.75]      (0, 0) circle [x radius= 3.35, y radius= 3.35]   ;
\draw    (474.12,95.73) -- (473.98,112.62) ;
\draw [shift={(474.12,95.73)}, rotate = 90.44] [color={rgb, 255:red, 0; green, 0; blue, 0 }  ][fill={rgb, 255:red, 0; green, 0; blue, 0 }  ][line width=0.75]      (0, 0) circle [x radius= 3.35, y radius= 3.35]   ;
\draw    (473.98,112.62) -- (473.85,129.51) ;
\draw [shift={(473.98,112.62)}, rotate = 90.44] [color={rgb, 255:red, 0; green, 0; blue, 0 }  ][fill={rgb, 255:red, 0; green, 0; blue, 0 }  ][line width=0.75]      (0, 0) circle [x radius= 3.35, y radius= 3.35]   ;
\draw    (473.85,129.51) -- (473.72,146.4) ;
\draw [shift={(473.85,129.51)}, rotate = 90.44] [color={rgb, 255:red, 0; green, 0; blue, 0 }  ][fill={rgb, 255:red, 0; green, 0; blue, 0 }  ][line width=0.75]      (0, 0) circle [x radius= 3.35, y radius= 3.35]   ;
\draw  [dash pattern={on 0.84pt off 2.51pt}]  (473.72,146.4) -- (473.53,171.73) ;
\draw [shift={(473.72,146.4)}, rotate = 90.44] [color={rgb, 255:red, 0; green, 0; blue, 0 }  ][fill={rgb, 255:red, 0; green, 0; blue, 0 }  ][line width=0.75]      (0, 0) circle [x radius= 3.35, y radius= 3.35]   ;
\draw  [dash pattern={on 0.84pt off 2.51pt}]  (473.59,163.28) -- (473.46,180.17) ;
\draw    (473.46,180.17) -- (473.33,197.06) ;
\draw [shift={(473.46,180.17)}, rotate = 90.44] [color={rgb, 255:red, 0; green, 0; blue, 0 }  ][fill={rgb, 255:red, 0; green, 0; blue, 0 }  ][line width=0.75]      (0, 0) circle [x radius= 3.35, y radius= 3.35]   ;
\draw    (473.33,197.06) -- (473.2,213.94) ;
\draw [shift={(473.33,197.06)}, rotate = 90.44] [color={rgb, 255:red, 0; green, 0; blue, 0 }  ][fill={rgb, 255:red, 0; green, 0; blue, 0 }  ][line width=0.75]      (0, 0) circle [x radius= 3.35, y radius= 3.35]   ;
\draw    (473.2,213.94) -- (473.07,230.83) ;
\draw [shift={(473.07,230.83)}, rotate = 90.44] [color={rgb, 255:red, 0; green, 0; blue, 0 }  ][fill={rgb, 255:red, 0; green, 0; blue, 0 }  ][line width=0.75]      (0, 0) circle [x radius= 3.35, y radius= 3.35]   ;
\draw [shift={(473.2,213.94)}, rotate = 90.44] [color={rgb, 255:red, 0; green, 0; blue, 0 }  ][fill={rgb, 255:red, 0; green, 0; blue, 0 }  ][line width=0.75]      (0, 0) circle [x radius= 3.35, y radius= 3.35]   ;
\draw  [dash pattern={on 0.84pt off 2.51pt}]  (395.65,230.55) -- (557.26,230.71) ;
\draw [color={rgb, 255:red, 3; green, 0; blue, 0 }  ,draw opacity=1 ]   (448.23,112.45) -- (473.85,129.51) ;
\draw [shift={(448.23,112.45)}, rotate = 33.66] [color={rgb, 255:red, 3; green, 0; blue, 0 }  ,draw opacity=1 ][fill={rgb, 255:red, 3; green, 0; blue, 0 }  ,fill opacity=1 ][line width=0.75]      (0, 0) circle [x radius= 3.35, y radius= 3.35]   ;
\draw [color={rgb, 255:red, 252; green, 3; blue, 3 }  ,draw opacity=1 ]   (450.64,28.37) .. controls (452.97,27.96) and (454.33,28.92) .. (454.74,31.24) .. controls (455.15,33.56) and (456.52,34.51) .. (458.84,34.1) .. controls (461.16,33.69) and (462.52,34.65) .. (462.93,36.97) .. controls (463.34,39.29) and (464.71,40.25) .. (467.03,39.84) .. controls (469.35,39.43) and (470.71,40.38) .. (471.12,42.7) -- (474.51,45.07) -- (474.51,45.07) ;
\draw [shift={(450.64,28.37)}, rotate = 34.99] [color={rgb, 255:red, 252; green, 3; blue, 3 }  ,draw opacity=1 ][fill={rgb, 255:red, 252; green, 3; blue, 3 }  ,fill opacity=1 ][line width=0.75]      (0, 0) circle [x radius= 3.35, y radius= 3.35]   ;
\draw [color={rgb, 255:red, 7; green, 1; blue, 1 }  ,draw opacity=1 ]   (448.23,112.45) .. controls (442.49,158.81) and (449.67,179.69) .. (473.33,197.06) ;
\draw [color={rgb, 255:red, 255; green, 0; blue, 0 }  ,draw opacity=1 ]   (450.64,28.37) .. controls (449.33,30.57) and (447.68,31.22) .. (445.67,30.31) .. controls (443.48,29.65) and (442,30.61) .. (441.23,33.19) .. controls (441.2,35.41) and (440.06,36.46) .. (437.83,36.34) .. controls (435.49,36.49) and (434.48,37.74) .. (434.8,40.08) .. controls (435.29,42.35) and (434.4,43.77) .. (432.12,44.36) .. controls (429.82,45.19) and (429.05,46.78) .. (429.81,49.14) .. controls (430.68,51.43) and (430.13,52.87) .. (428.14,53.46) .. controls (425.95,54.8) and (425.39,56.65) .. (426.45,59.02) .. controls (427.74,60.71) and (427.36,62.35) .. (425.3,63.94) .. controls (423.39,64.99) and (423.14,66.36) .. (424.54,68.04) .. controls (425.87,70.37) and (425.62,72.13) .. (423.79,73.34) .. controls (421.97,74.62) and (421.79,76.45) .. (423.26,78.82) .. controls (424.82,80.41) and (424.73,81.91) .. (422.98,83.32) .. controls (421.26,84.79) and (421.22,86.32) .. (422.85,87.9) .. controls (424.5,90.21) and (424.51,92.15) .. (422.87,93.72) .. controls (421.24,95.35) and (421.3,96.92) .. (423.03,98.44) .. controls (424.78,99.93) and (424.87,101.51) .. (423.31,103.2) .. controls (421.78,104.94) and (421.92,106.54) .. (423.73,107.99) .. controls (425.55,109.4) and (425.73,111) .. (424.26,112.78) .. controls (422.93,115.41) and (423.21,117.41) .. (425.1,118.77) .. controls (427,120.09) and (427.27,121.68) .. (425.9,123.55) .. controls (424.57,125.45) and (424.87,127.03) .. (426.82,128.29) .. controls (428.79,129.5) and (429.13,131.07) .. (427.86,132.98) .. controls (426.61,134.93) and (426.99,136.48) .. (429,137.62) .. controls (431.02,138.71) and (431.44,140.23) .. (430.26,142.18) .. controls (429.34,144.92) and (429.91,146.78) .. (431.98,147.76) .. controls (434.04,148.68) and (434.54,150.13) .. (433.47,152.11) .. controls (432.43,154.12) and (432.96,155.53) .. (435.06,156.34) .. controls (437.43,157.75) and (438.14,159.46) .. (437.19,161.45) .. controls (436.28,163.47) and (437.04,165.09) .. (439.48,166.32) .. controls (441.59,166.81) and (442.24,168.05) .. (441.41,170.04) .. controls (440.98,172.66) and (441.82,174.12) .. (443.95,174.43) .. controls (446.4,175.16) and (447.3,176.52) .. (446.64,178.51) .. controls (446.43,181.05) and (447.56,182.54) .. (450.03,182.97) .. controls (452.06,182.78) and (453.06,183.89) .. (453.01,186.29) .. controls (453.1,188.72) and (454.35,189.87) .. (456.74,189.75) .. controls (459.01,189.41) and (460.32,190.36) .. (460.65,192.61) .. controls (461.69,195.12) and (463.28,195.96) .. (465.42,195.11) .. controls (467.34,194.04) and (469,194.55) .. (470.39,196.64) -- (473.33,197.06) ;
\draw    (127.05,29.67) .. controls (94.01,35.06) and (85.08,194.85) .. (150.44,198.36) ;
\draw   (286,109.15) -- (328,109.15) -- (328,102) -- (356,116.3) -- (328,130.6) -- (328,123.45) -- (286,123.45) -- cycle ;

\draw (569.79,220.69) node [anchor=north west][inner sep=0.75pt]   [align=left] {$n_0$};
\draw (482.47,188.11) node [anchor=north west][inner sep=0.75pt]  [xslant=-0.06] [align=left] {w};
\draw (483.97,120.98) node [anchor=north west][inner sep=0.75pt]   [align=left] {v};
\draw (426.46,103.3) node [anchor=north west][inner sep=0.75pt]   [align=left] {u};
\draw (483.66,86.6) node [anchor=north west][inner sep=0.75pt]   [align=left] {$v_0$};
\draw (486.32,36.42) node [anchor=north west][inner sep=0.75pt]   [align=left] {$v_1$};
\draw (415.1,19.19) node [anchor=north west][inner sep=0.75pt]   [align=left] {$u_1$};
\draw (247,221.99) node [anchor=north west][inner sep=0.75pt]   [align=left] {$n_0$};
\draw (159.55,189.41) node [anchor=north west][inner sep=0.75pt]  [xslant=-0.06] [align=left] {w};
\draw (160.77,122.28) node [anchor=north west][inner sep=0.75pt]   [align=left] {v};
\draw (103.19,104.6) node [anchor=north west][inner sep=0.75pt]   [align=left] {u};
\draw (160.32,87.9) node [anchor=north west][inner sep=0.75pt]   [align=left] {$v_0$};
\draw (162.77,37.72) node [anchor=north west][inner sep=0.75pt]   [align=left] {$v_1$};
\draw (91.48,20.49) node [anchor=north west][inner sep=0.75pt]   [align=left] {$u_1$};

\end{tikzpicture}
    \caption{On the left side, in red, are the edges $ wu $ and $ vu $ that were originally part of $ C $. On the right side, in red, the process mentioned in the text has already been carried out. Now, the edges that are part of $ C $ are $ wu_1 $, $ u_1v_1 $ and $v_1Tv_0$.
}
    \label{figfinal1}
\end{figure}
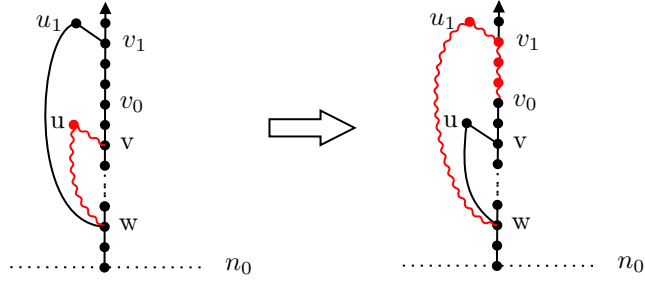

\textbf{Case 1:} $u\notin L_j$.

Then $u$ has no successors in $T$, and hence we have $v_j^{n_0}\leq w < v< u$. Let $v_1\in A\cap \lfloor v\rfloor$ be such that $v_1\in A_w$. Then, there exists $u_1\in B\setminus B^{\infty}$ such that $v_1, w\in N(u_1)$. Then remove the edges $vu, wu$ from $C$ and add the path $wu_1v_1Tv_0$ in $C$, where $v_0$ is the first vertex of $A$ above $v$ in the tree order (see Figure \ref{figfinal1}).

 Let $v_2\in A\cap \lfloor v_1\rfloor$ be such that $v_2\in A_v$. Then, there exists $u_2\in B\setminus B^{\infty}$ such that $v_2, v\in N(u_2)$ (see Figure \ref{figfinal2}). Then, add the path $vu_2v_2Tv_3$ into $C$, where $v_3$ is the first vertex of $A$ above $v_1$ in the tree order. Repeat the process inductively for all $v_i$'s obtained. Thus, we can cover all the vertices of $A$ that are in $\lfloor v\rfloor \cup \lceil v\rceil$.

\begin{figure}[ht]
    \centering

\tikzset{every picture/.style={line width=0.75pt}} 

\begin{tikzpicture}[x=0.55pt,y=0.55pt,yscale=-0.75,xscale=0.75]

\draw    (184.63,8.92) -- (184.51,26.3) ;
\draw [shift={(184.65,5.92)}, rotate = 90.4] [fill={rgb, 255:red, 0; green, 0; blue, 0 }  ][line width=0.08]  [draw opacity=0] (8.93,-4.29) -- (0,0) -- (8.93,4.29) -- cycle    ;
\draw    (184.51,26.3) -- (184.36,46.69) ;
\draw [shift={(184.51,26.3)}, rotate = 90.4] [color={rgb, 255:red, 0; green, 0; blue, 0 }  ][fill={rgb, 255:red, 0; green, 0; blue, 0 }  ][line width=0.75]      (0, 0) circle [x radius= 3.35, y radius= 3.35]   ;
\draw    (184.36,46.69) -- (184.12,66.53) ;
\draw [shift={(184.36,46.69)}, rotate = 90.69] [color={rgb, 255:red, 0; green, 0; blue, 0 }  ][fill={rgb, 255:red, 0; green, 0; blue, 0 }  ][line width=0.75]      (0, 0) circle [x radius= 3.35, y radius= 3.35]   ;
\draw    (184.12,66.53) -- (183.99,85.78) ;
\draw [shift={(183.99,85.78)}, rotate = 90.4] [color={rgb, 255:red, 0; green, 0; blue, 0 }  ][fill={rgb, 255:red, 0; green, 0; blue, 0 }  ][line width=0.75]      (0, 0) circle [x radius= 3.35, y radius= 3.35]   ;
\draw [shift={(184.12,66.53)}, rotate = 90.4] [color={rgb, 255:red, 0; green, 0; blue, 0 }  ][fill={rgb, 255:red, 0; green, 0; blue, 0 }  ][line width=0.75]      (0, 0) circle [x radius= 3.35, y radius= 3.35]   ;
\draw  [dash pattern={on 0.84pt off 2.51pt}]  (183.99,85.78) -- (183.85,105.04) ;
\draw    (183.85,105.04) -- (183.72,124.29) ;
\draw [shift={(183.85,105.04)}, rotate = 90.4] [color={rgb, 255:red, 0; green, 0; blue, 0 }  ][fill={rgb, 255:red, 0; green, 0; blue, 0 }  ][line width=0.75]      (0, 0) circle [x radius= 3.35, y radius= 3.35]   ;
\draw    (183.72,124.29) -- (183.58,143.55) ;
\draw [shift={(183.72,124.29)}, rotate = 90.4] [color={rgb, 255:red, 0; green, 0; blue, 0 }  ][fill={rgb, 255:red, 0; green, 0; blue, 0 }  ][line width=0.75]      (0, 0) circle [x radius= 3.35, y radius= 3.35]   ;
\draw    (183.58,143.55) -- (183.45,162.81) ;
\draw [shift={(183.58,143.55)}, rotate = 90.4] [color={rgb, 255:red, 0; green, 0; blue, 0 }  ][fill={rgb, 255:red, 0; green, 0; blue, 0 }  ][line width=0.75]      (0, 0) circle [x radius= 3.35, y radius= 3.35]   ;
\draw [color={rgb, 255:red, 252; green, 4; blue, 4 }  ,draw opacity=1 ]   (183.45,162.81) .. controls (185.1,164.49) and (185.09,166.16) .. (183.41,167.81) .. controls (181.73,169.46) and (181.72,171.12) .. (183.37,172.8) .. controls (185.02,174.47) and (185.01,176.14) .. (183.34,177.8) -- (183.31,182.06) -- (183.31,182.06) ;
\draw [shift={(183.45,162.81)}, rotate = 90.4] [color={rgb, 255:red, 252; green, 4; blue, 4 }  ,draw opacity=1 ][fill={rgb, 255:red, 252; green, 4; blue, 4 }  ,fill opacity=1 ][line width=0.75]      (0, 0) circle [x radius= 3.35, y radius= 3.35]   ;
\draw [color={rgb, 255:red, 248; green, 2; blue, 2 }  ,draw opacity=1 ] [dash pattern={on 0.84pt off 2.51pt}]  (183.31,182.06) -- (183.17,201.32) ;
\draw [shift={(183.31,182.06)}, rotate = 90.4] [color={rgb, 255:red, 248; green, 2; blue, 2 }  ,draw opacity=1 ][fill={rgb, 255:red, 248; green, 2; blue, 2 }  ,fill opacity=1 ][line width=0.75]      (0, 0) circle [x radius= 3.35, y radius= 3.35]   ;
\draw [color={rgb, 255:red, 255; green, 6; blue, 6 }  ,draw opacity=1 ]   (183.17,201.32) .. controls (184.83,202.99) and (184.82,204.66) .. (183.14,206.32) .. controls (181.46,207.97) and (181.45,209.64) .. (183.1,211.32) .. controls (184.75,212.99) and (184.74,214.66) .. (183.07,216.32) -- (183.04,220.57) -- (183.04,220.57) ;
\draw [shift={(183.17,201.32)}, rotate = 90.4] [color={rgb, 255:red, 255; green, 6; blue, 6 }  ,draw opacity=1 ][fill={rgb, 255:red, 255; green, 6; blue, 6 }  ,fill opacity=1 ][line width=0.75]      (0, 0) circle [x radius= 3.35, y radius= 3.35]   ;
\draw    (183.04,220.57) -- (182.9,239.83) ;
\draw [shift={(183.04,220.57)}, rotate = 90.4] [color={rgb, 255:red, 0; green, 0; blue, 0 }  ][fill={rgb, 255:red, 0; green, 0; blue, 0 }  ][line width=0.75]      (0, 0) circle [x radius= 3.35, y radius= 3.35]   ;
\draw    (182.9,239.83) -- (182.77,259.08) ;
\draw [shift={(182.77,259.08)}, rotate = 90.4] [color={rgb, 255:red, 0; green, 0; blue, 0 }  ][fill={rgb, 255:red, 0; green, 0; blue, 0 }  ][line width=0.75]      (0, 0) circle [x radius= 3.35, y radius= 3.35]   ;
\draw [shift={(182.9,239.83)}, rotate = 90.4] [color={rgb, 255:red, 0; green, 0; blue, 0 }  ][fill={rgb, 255:red, 0; green, 0; blue, 0 }  ][line width=0.75]      (0, 0) circle [x radius= 3.35, y radius= 3.35]   ;
\draw    (184.12,66.53) -- (211.48,47.64) ;
\draw [shift={(211.48,47.64)}, rotate = 325.37] [color={rgb, 255:red, 0; green, 0; blue, 0 }  ][fill={rgb, 255:red, 0; green, 0; blue, 0 }  ][line width=0.75]      (0, 0) circle [x radius= 3.35, y radius= 3.35]   ;
\draw [shift={(184.12,66.53)}, rotate = 325.37] [color={rgb, 255:red, 0; green, 0; blue, 0 }  ][fill={rgb, 255:red, 0; green, 0; blue, 0 }  ][line width=0.75]      (0, 0) circle [x radius= 3.35, y radius= 3.35]   ;
\draw [color={rgb, 255:red, 255; green, 47; blue, 47 }  ,draw opacity=1 ]   (149.85,266.48) .. controls (148.17,264.79) and (148.15,263.01) .. (149.79,261.16) .. controls (151.43,259.52) and (151.4,257.87) .. (149.7,256.22) .. controls (148.01,254.5) and (147.98,252.9) .. (149.62,251.43) .. controls (151.25,249.54) and (151.22,247.84) .. (149.53,246.34) .. controls (147.84,244.39) and (147.82,242.61) .. (149.47,240.98) .. controls (151.12,239.33) and (151.11,237.68) .. (149.43,236.02) .. controls (147.76,234.33) and (147.76,232.63) .. (149.43,230.91) .. controls (151.1,229.62) and (151.11,228.09) .. (149.46,226.33) .. controls (147.82,224.53) and (147.85,222.75) .. (149.54,220.99) .. controls (151.24,219.68) and (151.28,218.09) .. (149.66,216.24) .. controls (148.05,214.35) and (148.11,212.75) .. (149.83,211.44) .. controls (151.58,209.68) and (151.66,207.84) .. (150.09,205.91) .. controls (148.51,204.43) and (148.61,202.81) .. (150.39,201.06) .. controls (152.15,199.8) and (152.27,198.18) .. (150.75,196.21) .. controls (149.26,194.21) and (149.41,192.6) .. (151.2,191.37) .. controls (153.05,189.72) and (153.25,187.89) .. (151.8,185.88) .. controls (150.31,184.31) and (150.52,182.72) .. (152.42,181.13) .. controls (154.27,180.04) and (154.51,178.48) .. (153.14,176.45) .. controls (151.8,174.4) and (152.11,172.65) .. (154.08,171.2) .. controls (155.97,170.25) and (156.28,168.75) .. (155.02,166.72) .. controls (153.79,164.65) and (154.19,162.99) .. (156.23,161.74) .. controls (158.27,160.58) and (158.72,158.98) .. (157.59,156.93) .. controls (156.51,154.86) and (157.02,153.32) .. (159.13,152.33) .. controls (161.37,151.1) and (162.01,149.47) .. (161.06,147.43) -- (162.72,143.83) ;
\draw [color={rgb, 255:red, 253; green, 7; blue, 7 }  ,draw opacity=1 ]   (183.45,162.81) .. controls (181.09,162.91) and (179.86,161.78) .. (179.76,159.43) .. controls (179.65,157.08) and (178.42,155.95) .. (176.07,156.05) .. controls (173.72,156.16) and (172.49,155.03) .. (172.38,152.68) .. controls (172.27,150.33) and (171.04,149.2) .. (168.69,149.3) .. controls (166.34,149.4) and (165.11,148.27) .. (165.01,145.92) -- (162.72,143.83) -- (162.72,143.83) ;
\draw [shift={(162.72,143.83)}, rotate = 222.48] [color={rgb, 255:red, 253; green, 7; blue, 7 }  ,draw opacity=1 ][fill={rgb, 255:red, 253; green, 7; blue, 7 }  ,fill opacity=1 ][line width=0.75]      (0, 0) circle [x radius= 3.35, y radius= 3.35]   ;
\draw [shift={(183.45,162.81)}, rotate = 222.48] [color={rgb, 255:red, 253; green, 7; blue, 7 }  ,draw opacity=1 ][fill={rgb, 255:red, 253; green, 7; blue, 7 }  ,fill opacity=1 ][line width=0.75]      (0, 0) circle [x radius= 3.35, y radius= 3.35]   ;
\draw    (168.44,240.8) -- (182.77,259.08) ;
\draw [shift={(182.77,259.08)}, rotate = 51.92] [color={rgb, 255:red, 0; green, 0; blue, 0 }  ][fill={rgb, 255:red, 0; green, 0; blue, 0 }  ][line width=0.75]      (0, 0) circle [x radius= 3.35, y radius= 3.35]   ;
\draw [shift={(168.44,240.8)}, rotate = 51.92] [color={rgb, 255:red, 0; green, 0; blue, 0 }  ][fill={rgb, 255:red, 0; green, 0; blue, 0 }  ][line width=0.75]      (0, 0) circle [x radius= 3.35, y radius= 3.35]   ;
\draw    (182.77,259.08) .. controls (223.43,197.72) and (235.28,83.18) .. (211.48,47.64) ;
\draw    (429.64,14.39) -- (429.83,30.43) ;
\draw [shift={(429.61,11.39)}, rotate = 89.33] [fill={rgb, 255:red, 0; green, 0; blue, 0 }  ][line width=0.08]  [draw opacity=0] (8.93,-4.29) -- (0,0) -- (8.93,4.29) -- cycle    ;
\draw    (429.83,30.43) -- (430.05,49.46) ;
\draw [shift={(429.83,30.43)}, rotate = 89.33] [color={rgb, 255:red, 0; green, 0; blue, 0 }  ][fill={rgb, 255:red, 0; green, 0; blue, 0 }  ][line width=0.75]      (0, 0) circle [x radius= 3.35, y radius= 3.35]   ;
\draw    (430.05,49.46) -- (429.81,69.3) ;
\draw [shift={(430.05,49.46)}, rotate = 90.69] [color={rgb, 255:red, 0; green, 0; blue, 0 }  ][fill={rgb, 255:red, 0; green, 0; blue, 0 }  ][line width=0.75]      (0, 0) circle [x radius= 3.35, y radius= 3.35]   ;
\draw [color={rgb, 255:red, 255; green, 0; blue, 0 }  ,draw opacity=1 ]   (429.81,69.3) .. controls (431.47,70.97) and (431.46,72.64) .. (429.78,74.3) .. controls (428.1,75.95) and (428.09,77.62) .. (429.74,79.3) .. controls (431.39,80.97) and (431.38,82.64) .. (429.71,84.3) -- (429.68,88.55) -- (429.68,88.55) ;
\draw [shift={(429.68,88.55)}, rotate = 90.4] [color={rgb, 255:red, 255; green, 0; blue, 0 }  ,draw opacity=1 ][fill={rgb, 255:red, 255; green, 0; blue, 0 }  ,fill opacity=1 ][line width=0.75]      (0, 0) circle [x radius= 3.35, y radius= 3.35]   ;
\draw [shift={(429.81,69.3)}, rotate = 90.4] [color={rgb, 255:red, 255; green, 0; blue, 0 }  ,draw opacity=1 ][fill={rgb, 255:red, 255; green, 0; blue, 0 }  ,fill opacity=1 ][line width=0.75]      (0, 0) circle [x radius= 3.35, y radius= 3.35]   ;
\draw [color={rgb, 255:red, 255; green, 0; blue, 0 }  ,draw opacity=1 ] [dash pattern={on 0.84pt off 2.51pt}]  (429.68,88.55) -- (429.54,107.81) ;
\draw [color={rgb, 255:red, 255; green, 0; blue, 0 }  ,draw opacity=1 ]   (429.54,107.81) .. controls (431.2,109.48) and (431.19,111.15) .. (429.51,112.81) .. controls (427.83,114.46) and (427.82,116.13) .. (429.47,117.81) .. controls (431.12,119.48) and (431.11,121.15) .. (429.44,122.81) -- (429.41,127.06) -- (429.41,127.06) ;
\draw [shift={(429.54,107.81)}, rotate = 90.4] [color={rgb, 255:red, 255; green, 0; blue, 0 }  ,draw opacity=1 ][fill={rgb, 255:red, 255; green, 0; blue, 0 }  ,fill opacity=1 ][line width=0.75]      (0, 0) circle [x radius= 3.35, y radius= 3.35]   ;
\draw    (429.41,127.06) -- (429.27,146.32) ;
\draw [shift={(429.41,127.06)}, rotate = 90.4] [color={rgb, 255:red, 0; green, 0; blue, 0 }  ][fill={rgb, 255:red, 0; green, 0; blue, 0 }  ][line width=0.75]      (0, 0) circle [x radius= 3.35, y radius= 3.35]   ;
\draw    (429.27,146.32) -- (429.13,165.57) ;
\draw [shift={(429.27,146.32)}, rotate = 90.4] [color={rgb, 255:red, 0; green, 0; blue, 0 }  ][fill={rgb, 255:red, 0; green, 0; blue, 0 }  ][line width=0.75]      (0, 0) circle [x radius= 3.35, y radius= 3.35]   ;
\draw [color={rgb, 255:red, 252; green, 4; blue, 4 }  ,draw opacity=1 ]   (429.13,165.57) .. controls (430.79,167.25) and (430.78,168.92) .. (429.1,170.57) .. controls (427.42,172.22) and (427.41,173.89) .. (429.06,175.57) .. controls (430.71,177.24) and (430.7,178.91) .. (429.03,180.57) -- (429,184.83) -- (429,184.83) ;
\draw [shift={(429.13,165.57)}, rotate = 90.4] [color={rgb, 255:red, 252; green, 4; blue, 4 }  ,draw opacity=1 ][fill={rgb, 255:red, 252; green, 4; blue, 4 }  ,fill opacity=1 ][line width=0.75]      (0, 0) circle [x radius= 3.35, y radius= 3.35]   ;
\draw [color={rgb, 255:red, 248; green, 2; blue, 2 }  ,draw opacity=1 ] [dash pattern={on 0.84pt off 2.51pt}]  (429,184.83) -- (428.86,204.08) ;
\draw [shift={(429,184.83)}, rotate = 90.4] [color={rgb, 255:red, 248; green, 2; blue, 2 }  ,draw opacity=1 ][fill={rgb, 255:red, 248; green, 2; blue, 2 }  ,fill opacity=1 ][line width=0.75]      (0, 0) circle [x radius= 3.35, y radius= 3.35]   ;
\draw [color={rgb, 255:red, 255; green, 6; blue, 6 }  ,draw opacity=1 ]   (428.86,204.08) .. controls (430.52,205.76) and (430.51,207.43) .. (428.83,209.08) .. controls (427.15,210.73) and (427.14,212.4) .. (428.79,214.08) .. controls (430.44,215.75) and (430.43,217.42) .. (428.76,219.08) -- (428.73,223.34) -- (428.73,223.34) ;
\draw [shift={(428.86,204.08)}, rotate = 90.4] [color={rgb, 255:red, 255; green, 6; blue, 6 }  ,draw opacity=1 ][fill={rgb, 255:red, 255; green, 6; blue, 6 }  ,fill opacity=1 ][line width=0.75]      (0, 0) circle [x radius= 3.35, y radius= 3.35]   ;
\draw    (428.73,223.34) -- (428.59,242.59) ;
\draw [shift={(428.73,223.34)}, rotate = 90.4] [color={rgb, 255:red, 0; green, 0; blue, 0 }  ][fill={rgb, 255:red, 0; green, 0; blue, 0 }  ][line width=0.75]      (0, 0) circle [x radius= 3.35, y radius= 3.35]   ;
\draw    (428.59,242.59) -- (428.45,261.85) ;
\draw [shift={(428.45,261.85)}, rotate = 90.4] [color={rgb, 255:red, 0; green, 0; blue, 0 }  ][fill={rgb, 255:red, 0; green, 0; blue, 0 }  ][line width=0.75]      (0, 0) circle [x radius= 3.35, y radius= 3.35]   ;
\draw [shift={(428.59,242.59)}, rotate = 90.4] [color={rgb, 255:red, 0; green, 0; blue, 0 }  ][fill={rgb, 255:red, 0; green, 0; blue, 0 }  ][line width=0.75]      (0, 0) circle [x radius= 3.35, y radius= 3.35]   ;
\draw [color={rgb, 255:red, 249; green, 3; blue, 3 }  ,draw opacity=1 ]   (429.81,69.3) .. controls (430.24,66.98) and (431.61,66.03) .. (433.93,66.45) .. controls (436.25,66.88) and (437.62,65.93) .. (438.04,63.61) .. controls (438.47,61.29) and (439.84,60.34) .. (442.16,60.77) .. controls (444.48,61.2) and (445.85,60.25) .. (446.27,57.93) .. controls (446.69,55.61) and (448.06,54.66) .. (450.38,55.09) .. controls (452.7,55.52) and (454.07,54.57) .. (454.5,52.25) -- (457.17,50.4) -- (457.17,50.4) ;
\draw [shift={(457.17,50.4)}, rotate = 325.37] [color={rgb, 255:red, 249; green, 3; blue, 3 }  ,draw opacity=1 ][fill={rgb, 255:red, 249; green, 3; blue, 3 }  ,fill opacity=1 ][line width=0.75]      (0, 0) circle [x radius= 3.35, y radius= 3.35]   ;
\draw [shift={(429.81,69.3)}, rotate = 325.37] [color={rgb, 255:red, 249; green, 3; blue, 3 }  ,draw opacity=1 ][fill={rgb, 255:red, 249; green, 3; blue, 3 }  ,fill opacity=1 ][line width=0.75]      (0, 0) circle [x radius= 3.35, y radius= 3.35]   ;
\draw [color={rgb, 255:red, 255; green, 47; blue, 47 }  ,draw opacity=1 ]   (395.54,269.24) .. controls (393.86,267.55) and (393.84,265.78) .. (395.48,263.93) .. controls (397.12,262.29) and (397.09,260.64) .. (395.39,258.99) .. controls (393.7,257.27) and (393.67,255.67) .. (395.31,254.2) .. controls (396.94,252.31) and (396.91,250.61) .. (395.22,249.1) .. controls (393.53,247.16) and (393.51,245.38) .. (395.16,243.75) .. controls (396.81,242.1) and (396.8,240.44) .. (395.12,238.79) .. controls (393.45,237.1) and (393.45,235.39) .. (395.12,233.68) .. controls (396.79,232.39) and (396.8,230.86) .. (395.15,229.09) .. controls (393.51,227.3) and (393.54,225.52) .. (395.23,223.75) .. controls (396.93,222.44) and (396.97,220.86) .. (395.35,219) .. controls (393.74,217.12) and (393.8,215.52) .. (395.52,214.21) .. controls (397.27,212.45) and (397.35,210.61) .. (395.78,208.68) .. controls (394.2,207.19) and (394.3,205.58) .. (396.08,203.83) .. controls (397.84,202.56) and (397.96,200.94) .. (396.44,198.97) .. controls (394.95,196.98) and (395.09,195.37) .. (396.88,194.14) .. controls (398.73,192.49) and (398.94,190.66) .. (397.49,188.65) .. controls (396,187.08) and (396.21,185.49) .. (398.11,183.9) .. controls (399.96,182.81) and (400.2,181.25) .. (398.83,179.22) .. controls (397.49,177.17) and (397.8,175.42) .. (399.77,173.97) .. controls (401.66,173.02) and (401.97,171.52) .. (400.71,169.48) .. controls (399.48,167.41) and (399.88,165.75) .. (401.92,164.5) .. controls (403.96,163.34) and (404.41,161.74) .. (403.28,159.7) .. controls (402.2,157.62) and (402.71,156.09) .. (404.81,155.1) .. controls (407.06,153.87) and (407.7,152.24) .. (406.75,150.19) -- (408.41,146.6) ;
\draw [color={rgb, 255:red, 253; green, 7; blue, 7 }  ,draw opacity=1 ]   (429.13,165.57) .. controls (426.78,165.68) and (425.55,164.55) .. (425.45,162.2) .. controls (425.34,159.85) and (424.11,158.72) .. (421.76,158.82) .. controls (419.41,158.92) and (418.18,157.79) .. (418.07,155.44) .. controls (417.96,153.09) and (416.73,151.96) .. (414.38,152.07) .. controls (412.03,152.17) and (410.8,151.04) .. (410.7,148.69) -- (408.41,146.6) -- (408.41,146.6) ;
\draw [shift={(408.41,146.6)}, rotate = 222.48] [color={rgb, 255:red, 253; green, 7; blue, 7 }  ,draw opacity=1 ][fill={rgb, 255:red, 253; green, 7; blue, 7 }  ,fill opacity=1 ][line width=0.75]      (0, 0) circle [x radius= 3.35, y radius= 3.35]   ;
\draw [shift={(429.13,165.57)}, rotate = 222.48] [color={rgb, 255:red, 253; green, 7; blue, 7 }  ,draw opacity=1 ][fill={rgb, 255:red, 253; green, 7; blue, 7 }  ,fill opacity=1 ][line width=0.75]      (0, 0) circle [x radius= 3.35, y radius= 3.35]   ;
\draw    (414.13,243.56) -- (428.45,261.85) ;
\draw [shift={(428.45,261.85)}, rotate = 51.92] [color={rgb, 255:red, 0; green, 0; blue, 0 }  ][fill={rgb, 255:red, 0; green, 0; blue, 0 }  ][line width=0.75]      (0, 0) circle [x radius= 3.35, y radius= 3.35]   ;
\draw [shift={(414.13,243.56)}, rotate = 51.92] [color={rgb, 255:red, 0; green, 0; blue, 0 }  ][fill={rgb, 255:red, 0; green, 0; blue, 0 }  ][line width=0.75]      (0, 0) circle [x radius= 3.35, y radius= 3.35]   ;
\draw [color={rgb, 255:red, 252; green, 16; blue, 16 }  ,draw opacity=1 ]   (428.45,261.85) .. controls (428.25,259.09) and (429.25,257.52) .. (431.45,257.15) .. controls (433.66,256.71) and (434.44,255.41) .. (433.77,253.25) .. controls (433.44,250.45) and (434.37,248.78) .. (436.57,248.23) .. controls (438.77,247.62) and (439.49,246.24) .. (438.73,244.08) .. controls (437.94,241.94) and (438.63,240.53) .. (440.82,239.84) .. controls (443.01,239.11) and (443.68,237.66) .. (442.84,235.51) .. controls (442.29,232.64) and (443.09,230.8) .. (445.26,229.97) .. controls (447.43,229.11) and (448.05,227.61) .. (447.12,225.46) .. controls (446.17,223.33) and (446.76,221.8) .. (448.91,220.87) .. controls (451.05,219.9) and (451.62,218.34) .. (450.62,216.21) .. controls (449.6,214.1) and (450.15,212.52) .. (452.26,211.49) .. controls (454.37,210.43) and (454.9,208.84) .. (453.83,206.72) .. controls (452.74,204.63) and (453.24,203.02) .. (455.32,201.91) .. controls (457.4,200.76) and (457.88,199.14) .. (456.75,197.05) .. controls (455.6,194.98) and (456.04,193.35) .. (458.09,192.16) .. controls (460.14,190.94) and (460.56,189.3) .. (459.37,187.24) .. controls (458.15,185.21) and (458.55,183.56) .. (460.56,182.31) .. controls (462.57,181.02) and (462.85,179.78) .. (461.41,178.59) .. controls (460.14,176.59) and (460.5,174.94) .. (462.48,173.63) .. controls (464.45,172.3) and (464.78,170.65) .. (463.47,168.66) .. controls (462.14,166.71) and (462.45,165.05) .. (464.39,163.7) .. controls (466.32,162.33) and (466.6,160.68) .. (465.23,158.75) .. controls (463.84,156.84) and (464.1,155.2) .. (465.99,153.81) .. controls (467.88,152.4) and (468.1,150.76) .. (466.67,148.89) .. controls (465.23,147.05) and (465.43,145.42) .. (467.28,144) .. controls (469.12,142.56) and (469.3,140.94) .. (467.81,139.15) .. controls (466.38,136.59) and (466.57,134.58) .. (468.37,133.13) .. controls (470.16,131.67) and (470.27,130.09) .. (468.72,128.38) .. controls (467.15,126.71) and (467.25,125.14) .. (469,123.68) .. controls (470.74,122.2) and (470.81,120.65) .. (469.2,119.04) .. controls (467.59,116.71) and (467.64,114.81) .. (469.33,113.34) .. controls (471.01,111.85) and (471.02,110.35) .. (469.35,108.86) .. controls (467.67,107.41) and (467.64,105.58) .. (469.26,103.37) .. controls (470.88,101.88) and (470.83,100.45) .. (469.1,99.09) .. controls (467.32,97.09) and (467.22,95.35) .. (468.79,93.87) .. controls (470.28,91.68) and (470.13,90) .. (468.34,88.81) .. controls (466.47,87.04) and (466.28,85.41) .. (467.77,83.92) .. controls (469.15,81.78) and (468.92,80.22) .. (467.08,79.23) .. controls (465.12,77.72) and (464.78,75.92) .. (466.07,73.85) .. controls (467.31,71.76) and (466.91,70.07) .. (464.88,68.79) .. controls (462.83,67.65) and (462.37,66.07) .. (463.49,64.06) .. controls (464.56,62.01) and (464.03,60.55) .. (461.92,59.68) .. controls (459.83,58.98) and (459.14,57.43) .. (459.85,55.04) .. controls (460.43,52.6) and (459.65,51.23) .. (457.52,50.94) -- (457.17,50.4) ;
\draw   (288,123.62) -- (326.1,123.62) -- (326.1,111.2) -- (346.4,129.68) -- (326.1,148.16) -- (326.1,135.74) -- (288,135.74) -- cycle ;

\draw (191.92,249.65) node [anchor=north west][inner sep=0.75pt]   [align=left] {v};
\draw (154.48,210.6) node [anchor=north west][inner sep=0.75pt]   [align=left] {$v_0$};
\draw (190.99,154.01) node [anchor=north west][inner sep=0.75pt]   [align=left] {$v_1$};
\draw (192.17,59.71) node [anchor=north west][inner sep=0.75pt]   [align=left] {$v_2$};
\draw (191.11,116.03) node [anchor=north west][inner sep=0.75pt]   [align=left] {$v_3$};
\draw (130.89,134.07) node [anchor=north west][inner sep=0.75pt]  [rotate=-2.23] [align=left] {$u_1$};
\draw (153.2,230.5) node [anchor=north west][inner sep=0.75pt]   [align=left] {u};
\draw (218.05,39.19) node [anchor=north west][inner sep=0.75pt]   [align=left] {$u_2$};
\draw (437.61,252.41) node [anchor=north west][inner sep=0.75pt]   [align=left] {v};
\draw (400.17,213.37) node [anchor=north west][inner sep=0.75pt]   [align=left] {$v_0$};
\draw (436.68,156.78) node [anchor=north west][inner sep=0.75pt]   [align=left] {$v_1$};
\draw (437.86,62.48) node [anchor=north west][inner sep=0.75pt]   [align=left] {$v_2$};
\draw (436.8,118.8) node [anchor=north west][inner sep=0.75pt]   [align=left] {$v_3$};
\draw (376.24,137.31) node [anchor=north west][inner sep=0.75pt]   [align=left] {$u_1$};
\draw (398.89,233.27) node [anchor=north west][inner sep=0.75pt]   [align=left] {u};
\draw (463.74,41.96) node [anchor=north west][inner sep=0.75pt]   [align=left] {$u_2$};
\draw (191.63,16.06) node [anchor=north west][inner sep=0.75pt]   [align=left] {$v_5$};
\draw (437.25,21.43) node [anchor=north west][inner sep=0.75pt]   [align=left] {$v_5$};

\end{tikzpicture}
    \caption{On the left side, we have the subgraph $ C $ after the first modification made in Case 1. On the right side, the second process mentioned in Case 1 was carried out. In red are the edges $ vu_3 $, $ u_3v_3 $, and $ v_3Tv_2 $, which are now part of the subgraph $ C $.
}
    \label{figfinal2}
\end{figure}
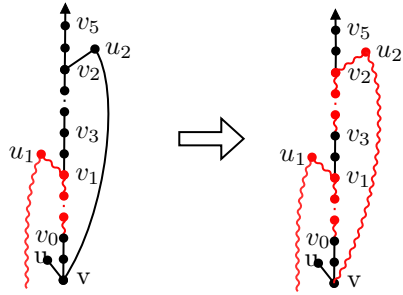

\textbf{Case 2:} $u\in L_j$.

Then $u$ has the unique successor in $T$, and thus it must be $w$ and $w\in L_j$. Let $v_1\in \lfloor w\rfloor\cap A_v$, it exists by Claim 3. Then we can find $u_1\in B\setminus B^{\infty}$ such that $v_1, v\in N(u_1)$. Now we remove edges $vu$ and $uw$ from $C$ and add the path $vu_1v_1Tv_0$ to $C$, where $v_0$ is the first vertex of $A$ above $w$ in $T$, see in Figure \ref{fig6}

\begin{figure}[ht]
    \centering

\tikzset{every picture/.style={line width=0.75pt}} 

\begin{tikzpicture}[x=0.4pt,y=0.4pt,yscale=-1,xscale=1]

\draw    (207.51,10) -- (207.59,41.73) ;
\draw [shift={(207.5,7)}, rotate = 89.85] [fill={rgb, 255:red, 0; green, 0; blue, 0 }  ][line width=0.08]  [draw opacity=0] (8.93,-4.29) -- (0,0) -- (8.93,4.29) -- cycle    ;
\draw    (207.59,41.73) -- (207.68,61.95) ;
\draw [shift={(207.59,41.73)}, rotate = 89.74] [color={rgb, 255:red, 0; green, 0; blue, 0 }  ][fill={rgb, 255:red, 0; green, 0; blue, 0 }  ][line width=0.75]      (0, 0) circle [x radius= 3.35, y radius= 3.35]   ;
\draw    (207.68,61.95) -- (207.77,82.18) ;
\draw [shift={(207.68,61.95)}, rotate = 89.74] [color={rgb, 255:red, 0; green, 0; blue, 0 }  ][fill={rgb, 255:red, 0; green, 0; blue, 0 }  ][line width=0.75]      (0, 0) circle [x radius= 3.35, y radius= 3.35]   ;
\draw    (207.77,82.18) -- (207.86,102.41) ;
\draw [shift={(207.77,82.18)}, rotate = 89.74] [color={rgb, 255:red, 0; green, 0; blue, 0 }  ][fill={rgb, 255:red, 0; green, 0; blue, 0 }  ][line width=0.75]      (0, 0) circle [x radius= 3.35, y radius= 3.35]   ;
\draw    (207.86,102.41) -- (207.95,122.64) ;
\draw [shift={(207.86,102.41)}, rotate = 89.74] [color={rgb, 255:red, 0; green, 0; blue, 0 }  ][fill={rgb, 255:red, 0; green, 0; blue, 0 }  ][line width=0.75]      (0, 0) circle [x radius= 3.35, y radius= 3.35]   ;
\draw    (207.95,122.64) -- (208.05,142.86) ;
\draw [shift={(207.95,122.64)}, rotate = 89.74] [color={rgb, 255:red, 0; green, 0; blue, 0 }  ][fill={rgb, 255:red, 0; green, 0; blue, 0 }  ][line width=0.75]      (0, 0) circle [x radius= 3.35, y radius= 3.35]   ;
\draw    (208.05,142.86) -- (208.14,163.09) ;
\draw [shift={(208.05,142.86)}, rotate = 89.74] [color={rgb, 255:red, 0; green, 0; blue, 0 }  ][fill={rgb, 255:red, 0; green, 0; blue, 0 }  ][line width=0.75]      (0, 0) circle [x radius= 3.35, y radius= 3.35]   ;
\draw [color={rgb, 255:red, 208; green, 2; blue, 27 }  ,draw opacity=1 ][fill={rgb, 255:red, 208; green, 2; blue, 27 }  ,fill opacity=1 ]   (208.14,163.09) .. controls (209.81,164.75) and (209.82,166.42) .. (208.16,168.09) .. controls (206.5,169.76) and (206.51,171.43) .. (208.18,173.09) .. controls (209.85,174.75) and (209.86,176.42) .. (208.2,178.09) .. controls (206.55,179.76) and (206.56,181.43) .. (208.23,183.09) -- (208.23,183.32) -- (208.23,183.32) ;
\draw [shift={(208.14,163.09)}, rotate = 89.74] [color={rgb, 255:red, 208; green, 2; blue, 27 }  ,draw opacity=1 ][fill={rgb, 255:red, 208; green, 2; blue, 27 }  ,fill opacity=1 ][line width=0.75]      (0, 0) circle [x radius= 3.35, y radius= 3.35]   ;
\draw [color={rgb, 255:red, 208; green, 2; blue, 27 }  ,draw opacity=1 ][fill={rgb, 255:red, 208; green, 2; blue, 27 }  ,fill opacity=1 ]   (208.23,183.32) .. controls (209.9,184.98) and (209.91,186.65) .. (208.25,188.32) .. controls (206.59,189.99) and (206.6,191.66) .. (208.27,193.32) .. controls (209.94,194.98) and (209.95,196.65) .. (208.29,198.32) .. controls (206.64,199.99) and (206.65,201.66) .. (208.32,203.32) -- (208.32,203.55) -- (208.32,203.55) ;
\draw [shift={(208.23,183.32)}, rotate = 89.74] [color={rgb, 255:red, 208; green, 2; blue, 27 }  ,draw opacity=1 ][fill={rgb, 255:red, 208; green, 2; blue, 27 }  ,fill opacity=1 ][line width=0.75]      (0, 0) circle [x radius= 3.35, y radius= 3.35]   ;
\draw    (208.32,203.55) -- (208.41,223.77) ;
\draw [shift={(208.32,203.55)}, rotate = 89.74] [color={rgb, 255:red, 0; green, 0; blue, 0 }  ][fill={rgb, 255:red, 0; green, 0; blue, 0 }  ][line width=0.75]      (0, 0) circle [x radius= 3.35, y radius= 3.35]   ;
\draw  [dash pattern={on 0.84pt off 2.51pt}]  (208.41,223.77) -- (208.5,254) ;
\draw [shift={(208.5,254)}, rotate = 89.83] [color={rgb, 255:red, 0; green, 0; blue, 0 }  ][fill={rgb, 255:red, 0; green, 0; blue, 0 }  ][line width=0.75]      (0, 0) circle [x radius= 3.35, y radius= 3.35]   ;
\draw [shift={(208.41,223.77)}, rotate = 89.83] [color={rgb, 255:red, 0; green, 0; blue, 0 }  ][fill={rgb, 255:red, 0; green, 0; blue, 0 }  ][line width=0.75]      (0, 0) circle [x radius= 3.35, y radius= 3.35]   ;
\draw    (187.5,22.5) .. controls (173,57) and (169.5,207) .. (208.32,203.55) ;
\draw    (187.5,22.5) -- (207.59,41.73) ;
\draw [shift={(187.5,22.5)}, rotate = 43.74] [color={rgb, 255:red, 0; green, 0; blue, 0 }  ][fill={rgb, 255:red, 0; green, 0; blue, 0 }  ][line width=0.75]      (0, 0) circle [x radius= 3.35, y radius= 3.35]   ;
\draw    (430.51,8.5) -- (430.59,40.23) ;
\draw [shift={(430.5,5.5)}, rotate = 89.85] [fill={rgb, 255:red, 0; green, 0; blue, 0 }  ][line width=0.08]  [draw opacity=0] (8.93,-4.29) -- (0,0) -- (8.93,4.29) -- cycle    ;
\draw [color={rgb, 255:red, 208; green, 2; blue, 27 }  ,draw opacity=1 ]   (430.59,40.23) .. controls (432.26,41.89) and (432.27,43.56) .. (430.61,45.23) .. controls (428.96,46.9) and (428.97,48.57) .. (430.64,50.23) .. controls (432.31,51.89) and (432.32,53.56) .. (430.66,55.23) .. controls (429,56.9) and (429.01,58.57) .. (430.68,60.23) -- (430.68,60.45) -- (430.68,60.45) ;
\draw [shift={(430.59,40.23)}, rotate = 89.74] [color={rgb, 255:red, 208; green, 2; blue, 27 }  ,draw opacity=1 ][fill={rgb, 255:red, 208; green, 2; blue, 27 }  ,fill opacity=1 ][line width=0.75]      (0, 0) circle [x radius= 3.35, y radius= 3.35]   ;
\draw [color={rgb, 255:red, 208; green, 2; blue, 27 }  ,draw opacity=1 ]   (430.68,60.45) .. controls (432.35,62.11) and (432.36,63.78) .. (430.7,65.45) .. controls (429.05,67.12) and (429.06,68.79) .. (430.73,70.45) .. controls (432.4,72.11) and (432.41,73.78) .. (430.75,75.45) .. controls (429.09,77.12) and (429.1,78.79) .. (430.77,80.45) -- (430.77,80.68) -- (430.77,80.68) ;
\draw [shift={(430.68,60.45)}, rotate = 89.74] [color={rgb, 255:red, 208; green, 2; blue, 27 }  ,draw opacity=1 ][fill={rgb, 255:red, 208; green, 2; blue, 27 }  ,fill opacity=1 ][line width=0.75]      (0, 0) circle [x radius= 3.35, y radius= 3.35]   ;
\draw [color={rgb, 255:red, 208; green, 2; blue, 27 }  ,draw opacity=1 ]   (430.77,80.68) .. controls (432.45,82.34) and (432.46,84.01) .. (430.8,85.68) .. controls (429.14,87.35) and (429.15,89.02) .. (430.82,90.68) .. controls (432.49,92.34) and (432.5,94.01) .. (430.84,95.68) .. controls (429.18,97.35) and (429.19,99.02) .. (430.86,100.68) -- (430.86,100.91) -- (430.86,100.91) ;
\draw [shift={(430.77,80.68)}, rotate = 89.74] [color={rgb, 255:red, 208; green, 2; blue, 27 }  ,draw opacity=1 ][fill={rgb, 255:red, 208; green, 2; blue, 27 }  ,fill opacity=1 ][line width=0.75]      (0, 0) circle [x radius= 3.35, y radius= 3.35]   ;
\draw [color={rgb, 255:red, 208; green, 2; blue, 27 }  ,draw opacity=1 ]   (430.86,100.91) .. controls (432.54,102.57) and (432.55,104.24) .. (430.89,105.91) .. controls (429.23,107.58) and (429.24,109.25) .. (430.91,110.91) .. controls (432.58,112.57) and (432.59,114.24) .. (430.93,115.91) .. controls (429.27,117.58) and (429.28,119.25) .. (430.95,120.91) -- (430.95,121.14) -- (430.95,121.14) ;
\draw [shift={(430.86,100.91)}, rotate = 89.74] [color={rgb, 255:red, 208; green, 2; blue, 27 }  ,draw opacity=1 ][fill={rgb, 255:red, 208; green, 2; blue, 27 }  ,fill opacity=1 ][line width=0.75]      (0, 0) circle [x radius= 3.35, y radius= 3.35]   ;
\draw    (430.95,121.14) -- (431.05,141.36) ;
\draw [shift={(430.95,121.14)}, rotate = 89.74] [color={rgb, 255:red, 0; green, 0; blue, 0 }  ][fill={rgb, 255:red, 0; green, 0; blue, 0 }  ][line width=0.75]      (0, 0) circle [x radius= 3.35, y radius= 3.35]   ;
\draw    (431.05,141.36) -- (431.14,161.59) ;
\draw [shift={(431.05,141.36)}, rotate = 89.74] [color={rgb, 255:red, 0; green, 0; blue, 0 }  ][fill={rgb, 255:red, 0; green, 0; blue, 0 }  ][line width=0.75]      (0, 0) circle [x radius= 3.35, y radius= 3.35]   ;
\draw [color={rgb, 255:red, 0; green, 0; blue, 0 }  ,draw opacity=1 ][fill={rgb, 255:red, 0; green, 0; blue, 0 }  ,fill opacity=1 ]   (431.14,161.59) -- (431.23,181.82) ;
\draw [shift={(431.14,161.59)}, rotate = 89.74] [color={rgb, 255:red, 0; green, 0; blue, 0 }  ,draw opacity=1 ][fill={rgb, 255:red, 0; green, 0; blue, 0 }  ,fill opacity=1 ][line width=0.75]      (0, 0) circle [x radius= 3.35, y radius= 3.35]   ;
\draw [color={rgb, 255:red, 0; green, 0; blue, 0 }  ,draw opacity=1 ][fill={rgb, 255:red, 0; green, 0; blue, 0 }  ,fill opacity=1 ]   (431.23,181.82) -- (431.32,202.05) ;
\draw [shift={(431.23,181.82)}, rotate = 89.74] [color={rgb, 255:red, 0; green, 0; blue, 0 }  ,draw opacity=1 ][fill={rgb, 255:red, 0; green, 0; blue, 0 }  ,fill opacity=1 ][line width=0.75]      (0, 0) circle [x radius= 3.35, y radius= 3.35]   ;
\draw    (431.32,202.05) -- (431.41,222.27) ;
\draw [shift={(431.32,202.05)}, rotate = 89.74] [color={rgb, 255:red, 0; green, 0; blue, 0 }  ][fill={rgb, 255:red, 0; green, 0; blue, 0 }  ][line width=0.75]      (0, 0) circle [x radius= 3.35, y radius= 3.35]   ;
\draw  [dash pattern={on 0.84pt off 2.51pt}]  (431.41,222.27) -- (431.5,252.5) ;
\draw [shift={(431.5,252.5)}, rotate = 89.83] [color={rgb, 255:red, 0; green, 0; blue, 0 }  ][fill={rgb, 255:red, 0; green, 0; blue, 0 }  ][line width=0.75]      (0, 0) circle [x radius= 3.35, y radius= 3.35]   ;
\draw [shift={(431.41,222.27)}, rotate = 89.83] [color={rgb, 255:red, 0; green, 0; blue, 0 }  ][fill={rgb, 255:red, 0; green, 0; blue, 0 }  ][line width=0.75]      (0, 0) circle [x radius= 3.35, y radius= 3.35]   ;
\draw [color={rgb, 255:red, 208; green, 2; blue, 27 }  ,draw opacity=1 ]   (410.5,21) .. controls (411.37,23.41) and (410.74,25.19) .. (408.61,26.33) .. controls (406.61,27.26) and (406.22,28.69) .. (407.44,30.6) .. controls (408.57,33.05) and (408.14,34.93) .. (406.16,36.22) .. controls (404.29,37.05) and (404.01,38.52) .. (405.31,40.63) .. controls (406.63,42.76) and (406.36,44.32) .. (404.51,45.33) .. controls (402.56,47.06) and (402.32,48.71) .. (403.78,50.3) .. controls (405.16,52.58) and (404.94,54.32) .. (403.11,55.52) .. controls (401.29,56.77) and (401.09,58.59) .. (402.51,60.97) .. controls (404.02,62.62) and (403.87,64.11) .. (402.07,65.46) .. controls (400.28,66.85) and (400.15,68.39) .. (401.68,70.07) .. controls (403.17,72.54) and (403.04,74.5) .. (401.27,75.97) .. controls (399.5,77.48) and (399.41,79.08) .. (400.99,80.79) .. controls (402.57,82.49) and (402.49,84.12) .. (400.76,85.67) .. controls (399.03,87.26) and (398.97,88.91) .. (400.58,90.62) .. controls (402.2,92.32) and (402.16,93.98) .. (400.45,95.61) .. controls (398.76,97.26) and (398.73,98.93) .. (400.38,100.63) .. controls (402.04,102.31) and (402.04,103.99) .. (400.37,105.67) .. controls (398.72,107.36) and (398.73,109.04) .. (400.42,110.71) .. controls (402.11,112.36) and (402.15,114.04) .. (400.52,115.75) .. controls (398.91,117.48) and (398.96,119.16) .. (400.69,120.77) .. controls (402.42,122.36) and (402.5,124.02) .. (400.91,125.76) .. controls (399.34,127.51) and (399.44,129.15) .. (401.21,130.7) .. controls (402.98,132.21) and (403.1,133.83) .. (401.56,135.58) .. controls (400.04,137.33) and (400.18,138.94) .. (401.99,140.39) .. controls (403.8,141.8) and (403.97,143.38) .. (402.48,145.12) .. controls (401.01,146.87) and (401.25,148.8) .. (403.2,150.89) .. controls (405.06,152.16) and (405.28,153.65) .. (403.85,155.38) .. controls (402.44,157.11) and (402.75,158.92) .. (404.77,160.81) .. controls (406.67,161.9) and (406.94,163.29) .. (405.59,165) .. controls (404.41,167.4) and (404.79,169.07) .. (406.72,170.01) .. controls (408.82,171.5) and (409.24,173.09) .. (407.98,174.76) .. controls (406.93,177.07) and (407.39,178.55) .. (409.36,179.21) .. controls (411.51,180.31) and (412.12,181.95) .. (411.19,184.13) .. controls (410.33,186.34) and (411,187.82) .. (413.2,188.56) .. controls (415.61,189.47) and (416.48,190.95) .. (415.81,193.02) .. controls (415.58,195.53) and (416.69,196.89) .. (419.12,197.12) .. controls (421.35,196.91) and (422.58,197.89) .. (422.8,200.07) .. controls (423.49,202.28) and (425.02,202.87) .. (427.4,201.86) -- (431.32,202.05) ;
\draw [color={rgb, 255:red, 208; green, 2; blue, 27 }  ,draw opacity=1 ]   (410.5,21) .. controls (412.85,20.95) and (414.06,22.11) .. (414.11,24.46) .. controls (414.16,26.81) and (415.37,27.96) .. (417.72,27.91) .. controls (420.08,27.86) and (421.29,29.01) .. (421.34,31.37) .. controls (421.39,33.72) and (422.6,34.88) .. (424.95,34.83) .. controls (427.3,34.78) and (428.51,35.94) .. (428.56,38.29) -- (430.59,40.23) -- (430.59,40.23) ;
\draw [shift={(410.5,21)}, rotate = 43.74] [color={rgb, 255:red, 208; green, 2; blue, 27 }  ,draw opacity=1 ][fill={rgb, 255:red, 208; green, 2; blue, 27 }  ,fill opacity=1 ][line width=0.75]      (0, 0) circle [x radius= 3.35, y radius= 3.35]   ;
\draw   (298,104.75) -- (329.8,104.75) -- (329.8,97.5) -- (351,112) -- (329.8,126.5) -- (329.8,119.25) -- (298,119.25) -- cycle ;

\draw (215.4,194.3) node [anchor=north west][inner sep=0.75pt]    {$v$};
\draw (216,174.6) node [anchor=north west][inner sep=0.75pt]    {$u$};
\draw (215,155) node [anchor=north west][inner sep=0.75pt]    {$w$};
\draw (215.5,33) node [anchor=north west][inner sep=0.75pt]    {$v_{1} \in A_{v}$};
\draw (165.5,4.9) node [anchor=north west][inner sep=0.75pt]    {$u_{1}$};
\draw (216.5,240.6) node [anchor=north west][inner sep=0.75pt]    {$v_{j}^{n_{0}}$};
\draw (438,193.6) node [anchor=north west][inner sep=0.75pt]    {$v$};
\draw (439,173.5) node [anchor=north west][inner sep=0.75pt]    {$u$};
\draw (438,153.9) node [anchor=north west][inner sep=0.75pt]    {$w$};
\draw (438.5,32.3) node [anchor=north west][inner sep=0.75pt]    {$v_{1} \in A_{v}$};
\draw (388.5,3.4) node [anchor=north west][inner sep=0.75pt]    {$u_{1}$};
\draw (439.5,239.5) node [anchor=north west][inner sep=0.75pt]    {$v_{j}^{n_{0}}$};
\draw (437.5,112.9) node [anchor=north west][inner sep=0.75pt]    {$v_{0}$};

\end{tikzpicture}

    \caption{First part of the process of adding and removing paths from cycle $C$ - Case 2.}
    \label{fig6}
\end{figure}
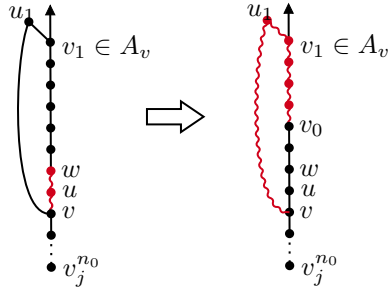

Now let $v_2\in \lfloor v_1\rfloor\cap A_w$, it exists by Claim 3. Then there exists $u_2\in B\setminus B^{\infty}$ such that $v_2, w\in N(u_2)$. Let us add the path $wu_2v_2Tv_3$ into $C$, where $v_3\in A$ is the minimal one above $v_1$ in the tree order, see Figure \ref{fig7}. Repeating the process for all $v_i$'s obtained, we get a double ray $R_j$ covering $(T^{<n_0}\cup L_j)\cap A$ such that each $a\in (A\cap C)\setminus L_j$ has the same two neighbors in $C$ and in $R_j$. This finishes Case 2.

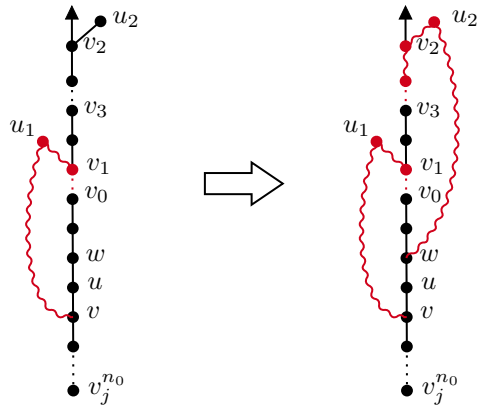
\begin{figure}[ht]
    \centering

\tikzset{every picture/.style={line width=0.75pt}} 

\begin{tikzpicture}[x=0.55pt,y=0.55pt,yscale=-1,xscale=1]

\draw    (207.5,5) -- (207.55,29.51) ;
\draw [shift={(207.5,2)}, rotate = 89.91] [fill={rgb, 255:red, 0; green, 0; blue, 0 }  ][line width=0.08]  [draw opacity=0] (8.93,-4.29) -- (0,0) -- (8.93,4.29) -- cycle    ;
\draw    (207.55,29.51) -- (207.59,53.53) ;
\draw [shift={(207.55,29.51)}, rotate = 89.89] [color={rgb, 255:red, 0; green, 0; blue, 0 }  ][fill={rgb, 255:red, 0; green, 0; blue, 0 }  ][line width=0.75]      (0, 0) circle [x radius= 3.35, y radius= 3.35]   ;
\draw  [dash pattern={on 0.84pt off 2.51pt}]  (207.59,53.53) -- (207.68,73.75) ;
\draw [shift={(207.59,53.53)}, rotate = 89.74] [color={rgb, 255:red, 0; green, 0; blue, 0 }  ][fill={rgb, 255:red, 0; green, 0; blue, 0 }  ][line width=0.75]      (0, 0) circle [x radius= 3.35, y radius= 3.35]   ;
\draw    (207.68,73.75) -- (207.77,93.98) ;
\draw [shift={(207.68,73.75)}, rotate = 89.74] [color={rgb, 255:red, 0; green, 0; blue, 0 }  ][fill={rgb, 255:red, 0; green, 0; blue, 0 }  ][line width=0.75]      (0, 0) circle [x radius= 3.35, y radius= 3.35]   ;
\draw    (207.77,93.98) -- (207.86,114.21) ;
\draw [shift={(207.77,93.98)}, rotate = 89.74] [color={rgb, 255:red, 0; green, 0; blue, 0 }  ][fill={rgb, 255:red, 0; green, 0; blue, 0 }  ][line width=0.75]      (0, 0) circle [x radius= 3.35, y radius= 3.35]   ;
\draw [color={rgb, 255:red, 208; green, 2; blue, 27 }  ,draw opacity=1 ] [dash pattern={on 0.84pt off 2.51pt}]  (207.86,114.21) -- (207.95,134.44) ;
\draw [shift={(207.86,114.21)}, rotate = 89.74] [color={rgb, 255:red, 208; green, 2; blue, 27 }  ,draw opacity=1 ][fill={rgb, 255:red, 208; green, 2; blue, 27 }  ,fill opacity=1 ][line width=0.75]      (0, 0) circle [x radius= 3.35, y radius= 3.35]   ;
\draw    (207.95,134.44) -- (208.05,154.66) ;
\draw [shift={(207.95,134.44)}, rotate = 89.74] [color={rgb, 255:red, 0; green, 0; blue, 0 }  ][fill={rgb, 255:red, 0; green, 0; blue, 0 }  ][line width=0.75]      (0, 0) circle [x radius= 3.35, y radius= 3.35]   ;
\draw    (208.05,154.66) -- (208.14,174.89) ;
\draw [shift={(208.05,154.66)}, rotate = 89.74] [color={rgb, 255:red, 0; green, 0; blue, 0 }  ][fill={rgb, 255:red, 0; green, 0; blue, 0 }  ][line width=0.75]      (0, 0) circle [x radius= 3.35, y radius= 3.35]   ;
\draw [color={rgb, 255:red, 0; green, 0; blue, 0 }  ,draw opacity=1 ][fill={rgb, 255:red, 0; green, 0; blue, 0 }  ,fill opacity=1 ]   (208.14,174.89) -- (208.23,195.12) ;
\draw [shift={(208.14,174.89)}, rotate = 89.74] [color={rgb, 255:red, 0; green, 0; blue, 0 }  ,draw opacity=1 ][fill={rgb, 255:red, 0; green, 0; blue, 0 }  ,fill opacity=1 ][line width=0.75]      (0, 0) circle [x radius= 3.35, y radius= 3.35]   ;
\draw [color={rgb, 255:red, 0; green, 0; blue, 0 }  ,draw opacity=1 ][fill={rgb, 255:red, 0; green, 0; blue, 0 }  ,fill opacity=1 ]   (208.23,195.12) -- (208.32,215.35) ;
\draw [shift={(208.23,195.12)}, rotate = 89.74] [color={rgb, 255:red, 0; green, 0; blue, 0 }  ,draw opacity=1 ][fill={rgb, 255:red, 0; green, 0; blue, 0 }  ,fill opacity=1 ][line width=0.75]      (0, 0) circle [x radius= 3.35, y radius= 3.35]   ;
\draw    (208.32,215.35) -- (208.41,235.57) ;
\draw [shift={(208.32,215.35)}, rotate = 89.74] [color={rgb, 255:red, 0; green, 0; blue, 0 }  ][fill={rgb, 255:red, 0; green, 0; blue, 0 }  ][line width=0.75]      (0, 0) circle [x radius= 3.35, y radius= 3.35]   ;
\draw  [dash pattern={on 0.84pt off 2.51pt}]  (208.41,235.57) -- (208.5,265.8) ;
\draw [shift={(208.5,265.8)}, rotate = 89.83] [color={rgb, 255:red, 0; green, 0; blue, 0 }  ][fill={rgb, 255:red, 0; green, 0; blue, 0 }  ][line width=0.75]      (0, 0) circle [x radius= 3.35, y radius= 3.35]   ;
\draw [shift={(208.41,235.57)}, rotate = 89.83] [color={rgb, 255:red, 0; green, 0; blue, 0 }  ][fill={rgb, 255:red, 0; green, 0; blue, 0 }  ][line width=0.75]      (0, 0) circle [x radius= 3.35, y radius= 3.35]   ;
\draw [color={rgb, 255:red, 208; green, 2; blue, 27 }  ,draw opacity=1 ]   (187.77,94.98) .. controls (188.62,97.36) and (187.99,99.02) .. (185.88,99.96) .. controls (183.77,101.06) and (183.25,102.7) .. (184.33,104.88) .. controls (185.57,106.62) and (185.15,108.16) .. (183.07,109.51) .. controls (181,110.99) and (180.62,112.63) .. (181.92,114.43) .. controls (183.25,116.21) and (182.9,117.92) .. (180.89,119.57) .. controls (178.98,120.8) and (178.71,122.32) .. (180.1,124.14) .. controls (181.51,125.94) and (181.24,127.77) .. (179.3,129.62) .. controls (177.45,131.01) and (177.26,132.61) .. (178.73,134.42) .. controls (180.22,136.2) and (180.07,137.82) .. (178.26,139.29) .. controls (176.47,140.8) and (176.35,142.44) .. (177.9,144.2) .. controls (179.48,145.93) and (179.4,147.57) .. (177.67,149.14) .. controls (175.95,150.75) and (175.91,152.39) .. (177.55,154.07) .. controls (179.21,155.7) and (179.21,157.34) .. (177.56,158.97) .. controls (175.93,160.64) and (175.98,162.26) .. (177.71,163.83) .. controls (179.49,165.86) and (179.6,167.71) .. (178.05,169.4) .. controls (176.53,171.12) and (176.68,172.67) .. (178.5,174.06) .. controls (180.34,175.37) and (180.58,177.13) .. (179.22,179.34) .. controls (177.82,181.1) and (178.08,182.55) .. (180,183.7) .. controls (182.03,185.19) and (182.4,186.81) .. (181.11,188.55) .. controls (180,190.8) and (180.52,192.53) .. (182.66,193.76) .. controls (184.81,194.79) and (185.42,196.38) .. (184.51,198.54) .. controls (183.72,200.77) and (184.45,202.21) .. (186.69,202.85) .. controls (188.89,203.23) and (189.84,204.63) .. (189.54,207.05) .. controls (189.26,209.27) and (190.36,210.41) .. (192.83,210.48) .. controls (195.11,210.14) and (196.5,211.08) .. (197.01,213.31) .. controls (198,215.54) and (199.59,216.11) .. (201.78,215) .. controls (203.34,213.56) and (204.95,213.71) .. (206.6,215.44) -- (208.32,215.35) ;
\draw [color={rgb, 255:red, 208; green, 2; blue, 27 }  ,draw opacity=1 ]   (187.77,94.98) .. controls (190.13,94.93) and (191.34,96.08) .. (191.39,98.44) .. controls (191.44,100.79) and (192.65,101.95) .. (195,101.9) .. controls (197.35,101.85) and (198.56,103) .. (198.61,105.35) .. controls (198.66,107.7) and (199.87,108.86) .. (202.22,108.81) .. controls (204.57,108.76) and (205.78,109.92) .. (205.83,112.27) -- (207.86,114.21) -- (207.86,114.21) ;
\draw [shift={(187.77,94.98)}, rotate = 43.74] [color={rgb, 255:red, 208; green, 2; blue, 27 }  ,draw opacity=1 ][fill={rgb, 255:red, 208; green, 2; blue, 27 }  ,fill opacity=1 ][line width=0.75]      (0, 0) circle [x radius= 3.35, y radius= 3.35]   ;
\draw   (298,116.55) -- (329.8,116.55) -- (329.8,109.3) -- (351,123.8) -- (329.8,138.3) -- (329.8,131.05) -- (298,131.05) -- cycle ;
\draw    (227,12.5) -- (207.55,29.51) ;
\draw [shift={(227,12.5)}, rotate = 138.83] [color={rgb, 255:red, 0; green, 0; blue, 0 }  ][fill={rgb, 255:red, 0; green, 0; blue, 0 }  ][line width=0.75]      (0, 0) circle [x radius= 3.35, y radius= 3.35]   ;
\draw    (434.5,5) -- (434.55,29.51) ;
\draw [shift={(434.5,2)}, rotate = 89.91] [fill={rgb, 255:red, 0; green, 0; blue, 0 }  ][line width=0.08]  [draw opacity=0] (8.93,-4.29) -- (0,0) -- (8.93,4.29) -- cycle    ;
\draw [color={rgb, 255:red, 208; green, 2; blue, 27 }  ,draw opacity=1 ]   (434.55,29.51) .. controls (436.22,31.18) and (436.22,32.84) .. (434.55,34.51) .. controls (432.89,36.18) and (432.89,37.84) .. (434.56,39.51) .. controls (436.23,41.18) and (436.23,42.84) .. (434.57,44.51) .. controls (432.91,46.18) and (432.91,47.84) .. (434.58,49.51) -- (434.59,53.53) -- (434.59,53.53) ;
\draw [shift={(434.55,29.51)}, rotate = 89.89] [color={rgb, 255:red, 208; green, 2; blue, 27 }  ,draw opacity=1 ][fill={rgb, 255:red, 208; green, 2; blue, 27 }  ,fill opacity=1 ][line width=0.75]      (0, 0) circle [x radius= 3.35, y radius= 3.35]   ;
\draw [color={rgb, 255:red, 208; green, 2; blue, 27 }  ,draw opacity=1 ] [dash pattern={on 0.84pt off 2.51pt}]  (434.59,53.53) -- (434.68,73.75) ;
\draw [shift={(434.59,53.53)}, rotate = 89.74] [color={rgb, 255:red, 208; green, 2; blue, 27 }  ,draw opacity=1 ][fill={rgb, 255:red, 208; green, 2; blue, 27 }  ,fill opacity=1 ][line width=0.75]      (0, 0) circle [x radius= 3.35, y radius= 3.35]   ;
\draw    (434.68,73.75) -- (434.77,93.98) ;
\draw [shift={(434.68,73.75)}, rotate = 89.74] [color={rgb, 255:red, 0; green, 0; blue, 0 }  ][fill={rgb, 255:red, 0; green, 0; blue, 0 }  ][line width=0.75]      (0, 0) circle [x radius= 3.35, y radius= 3.35]   ;
\draw    (434.77,93.98) -- (434.86,114.21) ;
\draw [shift={(434.77,93.98)}, rotate = 89.74] [color={rgb, 255:red, 0; green, 0; blue, 0 }  ][fill={rgb, 255:red, 0; green, 0; blue, 0 }  ][line width=0.75]      (0, 0) circle [x radius= 3.35, y radius= 3.35]   ;
\draw [color={rgb, 255:red, 208; green, 2; blue, 27 }  ,draw opacity=1 ] [dash pattern={on 0.84pt off 2.51pt}]  (434.86,114.21) -- (434.95,134.44) ;
\draw [shift={(434.86,114.21)}, rotate = 89.74] [color={rgb, 255:red, 208; green, 2; blue, 27 }  ,draw opacity=1 ][fill={rgb, 255:red, 208; green, 2; blue, 27 }  ,fill opacity=1 ][line width=0.75]      (0, 0) circle [x radius= 3.35, y radius= 3.35]   ;
\draw    (434.95,134.44) -- (435.05,154.66) ;
\draw [shift={(434.95,134.44)}, rotate = 89.74] [color={rgb, 255:red, 0; green, 0; blue, 0 }  ][fill={rgb, 255:red, 0; green, 0; blue, 0 }  ][line width=0.75]      (0, 0) circle [x radius= 3.35, y radius= 3.35]   ;
\draw    (435.05,154.66) -- (435.14,174.89) ;
\draw [shift={(435.05,154.66)}, rotate = 89.74] [color={rgb, 255:red, 0; green, 0; blue, 0 }  ][fill={rgb, 255:red, 0; green, 0; blue, 0 }  ][line width=0.75]      (0, 0) circle [x radius= 3.35, y radius= 3.35]   ;
\draw [color={rgb, 255:red, 0; green, 0; blue, 0 }  ,draw opacity=1 ][fill={rgb, 255:red, 0; green, 0; blue, 0 }  ,fill opacity=1 ]   (435.14,174.89) -- (435.23,195.12) ;
\draw [shift={(435.14,174.89)}, rotate = 89.74] [color={rgb, 255:red, 0; green, 0; blue, 0 }  ,draw opacity=1 ][fill={rgb, 255:red, 0; green, 0; blue, 0 }  ,fill opacity=1 ][line width=0.75]      (0, 0) circle [x radius= 3.35, y radius= 3.35]   ;
\draw [color={rgb, 255:red, 0; green, 0; blue, 0 }  ,draw opacity=1 ][fill={rgb, 255:red, 0; green, 0; blue, 0 }  ,fill opacity=1 ]   (435.23,195.12) -- (435.32,215.35) ;
\draw [shift={(435.23,195.12)}, rotate = 89.74] [color={rgb, 255:red, 0; green, 0; blue, 0 }  ,draw opacity=1 ][fill={rgb, 255:red, 0; green, 0; blue, 0 }  ,fill opacity=1 ][line width=0.75]      (0, 0) circle [x radius= 3.35, y radius= 3.35]   ;
\draw    (435.32,215.35) -- (435.41,235.57) ;
\draw [shift={(435.32,215.35)}, rotate = 89.74] [color={rgb, 255:red, 0; green, 0; blue, 0 }  ][fill={rgb, 255:red, 0; green, 0; blue, 0 }  ][line width=0.75]      (0, 0) circle [x radius= 3.35, y radius= 3.35]   ;
\draw  [dash pattern={on 0.84pt off 2.51pt}]  (435.41,235.57) -- (435.5,265.8) ;
\draw [shift={(435.5,265.8)}, rotate = 89.83] [color={rgb, 255:red, 0; green, 0; blue, 0 }  ][fill={rgb, 255:red, 0; green, 0; blue, 0 }  ][line width=0.75]      (0, 0) circle [x radius= 3.35, y radius= 3.35]   ;
\draw [shift={(435.41,235.57)}, rotate = 89.83] [color={rgb, 255:red, 0; green, 0; blue, 0 }  ][fill={rgb, 255:red, 0; green, 0; blue, 0 }  ][line width=0.75]      (0, 0) circle [x radius= 3.35, y radius= 3.35]   ;
\draw [color={rgb, 255:red, 208; green, 2; blue, 27 }  ,draw opacity=1 ]   (414.77,94.98) .. controls (415.62,97.36) and (414.99,99.02) .. (412.88,99.96) .. controls (410.77,101.06) and (410.25,102.7) .. (411.33,104.88) .. controls (412.57,106.62) and (412.15,108.16) .. (410.07,109.51) .. controls (408,110.99) and (407.62,112.63) .. (408.92,114.43) .. controls (410.25,116.21) and (409.9,117.92) .. (407.89,119.57) .. controls (405.98,120.8) and (405.71,122.32) .. (407.1,124.14) .. controls (408.51,125.94) and (408.24,127.77) .. (406.3,129.62) .. controls (404.45,131.01) and (404.26,132.61) .. (405.73,134.42) .. controls (407.22,136.2) and (407.07,137.82) .. (405.26,139.29) .. controls (403.47,140.8) and (403.35,142.44) .. (404.9,144.2) .. controls (406.48,145.93) and (406.4,147.57) .. (404.67,149.14) .. controls (402.95,150.75) and (402.91,152.39) .. (404.55,154.07) .. controls (406.21,155.7) and (406.21,157.34) .. (404.56,158.97) .. controls (402.93,160.64) and (402.98,162.26) .. (404.71,163.83) .. controls (406.49,165.86) and (406.6,167.71) .. (405.05,169.4) .. controls (403.53,171.12) and (403.68,172.67) .. (405.5,174.06) .. controls (407.34,175.37) and (407.58,177.13) .. (406.22,179.34) .. controls (404.82,181.1) and (405.08,182.55) .. (407,183.7) .. controls (409.03,185.19) and (409.4,186.81) .. (408.11,188.55) .. controls (407,190.8) and (407.52,192.53) .. (409.66,193.76) .. controls (411.81,194.79) and (412.42,196.38) .. (411.51,198.54) .. controls (410.72,200.77) and (411.45,202.21) .. (413.69,202.85) .. controls (415.89,203.23) and (416.84,204.63) .. (416.54,207.05) .. controls (416.26,209.27) and (417.36,210.41) .. (419.83,210.48) .. controls (422.11,210.14) and (423.5,211.08) .. (424.01,213.31) .. controls (425,215.54) and (426.59,216.11) .. (428.78,215) .. controls (430.34,213.56) and (431.95,213.71) .. (433.6,215.44) -- (435.32,215.35) ;
\draw [color={rgb, 255:red, 208; green, 2; blue, 27 }  ,draw opacity=1 ]   (414.77,94.98) .. controls (417.13,94.93) and (418.34,96.08) .. (418.39,98.44) .. controls (418.44,100.79) and (419.65,101.95) .. (422,101.9) .. controls (424.35,101.85) and (425.56,103) .. (425.61,105.35) .. controls (425.66,107.7) and (426.87,108.86) .. (429.22,108.81) .. controls (431.57,108.76) and (432.78,109.92) .. (432.83,112.27) -- (434.86,114.21) -- (434.86,114.21) ;
\draw [shift={(414.77,94.98)}, rotate = 43.74] [color={rgb, 255:red, 208; green, 2; blue, 27 }  ,draw opacity=1 ][fill={rgb, 255:red, 208; green, 2; blue, 27 }  ,fill opacity=1 ][line width=0.75]      (0, 0) circle [x radius= 3.35, y radius= 3.35]   ;
\draw [color={rgb, 255:red, 208; green, 2; blue, 27 }  ,draw opacity=1 ]   (454,13) .. controls (453.81,15.35) and (452.54,16.43) .. (450.19,16.24) .. controls (447.84,16.05) and (446.57,17.12) .. (446.38,19.47) .. controls (446.19,21.82) and (444.91,22.9) .. (442.56,22.71) .. controls (440.21,22.52) and (438.94,23.59) .. (438.75,25.94) .. controls (438.56,28.29) and (437.29,29.37) .. (434.94,29.18) -- (434.55,29.51) -- (434.55,29.51) ;
\draw [shift={(454,13)}, rotate = 139.67] [color={rgb, 255:red, 208; green, 2; blue, 27 }  ,draw opacity=1 ][fill={rgb, 255:red, 208; green, 2; blue, 27 }  ,fill opacity=1 ][line width=0.75]      (0, 0) circle [x radius= 3.35, y radius= 3.35]   ;
\draw [color={rgb, 255:red, 208; green, 2; blue, 27 }  ,draw opacity=1 ]   (435.14,174.89) .. controls (435.54,172.52) and (436.94,171.43) .. (439.33,171.63) .. controls (441.78,171.69) and (443.07,170.5) .. (443.21,168.05) .. controls (443.2,165.66) and (444.39,164.37) .. (446.78,164.18) .. controls (448.83,164.33) and (449.74,163.18) .. (449.53,160.74) .. controls (449.2,158.35) and (450.21,156.9) .. (452.56,156.38) .. controls (454.93,155.72) and (455.7,154.45) .. (454.87,152.56) .. controls (454.26,150.21) and (455.11,148.61) .. (457.4,147.78) .. controls (459.69,146.82) and (460.33,145.44) .. (459.3,143.64) .. controls (458.45,141.33) and (459.13,139.62) .. (461.34,138.51) .. controls (463.54,137.3) and (464.04,135.83) .. (462.85,134.11) .. controls (461.8,131.85) and (462.34,130.05) .. (464.45,128.7) .. controls (466.38,127.89) and (466.76,126.36) .. (465.59,124.1) .. controls (464.37,121.9) and (464.7,120.34) .. (466.58,119.43) .. controls (468.57,117.82) and (468.85,116.25) .. (467.42,114.7) .. controls (466.06,112.55) and (466.33,110.64) .. (468.23,108.95) .. controls (470.04,107.85) and (470.21,106.24) .. (468.75,104.13) .. controls (467.2,102.7) and (467.33,101.09) .. (469.14,99.28) .. controls (470.88,98.08) and (470.96,96.46) .. (469.39,94.43) .. controls (467.78,92.45) and (467.83,90.83) .. (469.52,89.58) .. controls (471.19,87.64) and (471.19,86.03) .. (469.52,84.74) .. controls (467.82,82.85) and (467.78,81.25) .. (469.41,79.93) .. controls (470.99,77.93) and (470.9,76.02) .. (469.13,74.2) .. controls (467.38,73.06) and (467.27,71.49) .. (468.78,69.49) .. controls (470.26,67.46) and (470.11,65.92) .. (468.33,64.85) .. controls (466.46,63.22) and (466.24,61.39) .. (467.66,59.38) .. controls (469.05,57.34) and (468.83,55.85) .. (467,54.92) .. controls (465.07,53.45) and (464.78,51.72) .. (466.11,49.71) .. controls (467.42,47.68) and (467.08,46) .. (465.1,44.67) .. controls (463.11,43.41) and (462.74,41.79) .. (463.99,39.82) .. controls (465.2,37.83) and (464.8,36.29) .. (462.78,35.18) .. controls (460.75,34.16) and (460.25,32.46) .. (461.26,30.07) .. controls (462.38,28.14) and (461.92,26.77) .. (459.87,25.96) .. controls (457.68,24.85) and (457.03,23.17) .. (457.92,20.93) .. controls (458.76,18.69) and (458.07,17.19) .. (455.86,16.44) -- (454,13) ;

\draw (215.4,206.1) node [anchor=north west][inner sep=0.75pt]    {$v$};
\draw (216,186.4) node [anchor=north west][inner sep=0.75pt]    {$u$};
\draw (215,166.8) node [anchor=north west][inner sep=0.75pt]    {$w$};
\draw (215.5,104.8) node [anchor=north west][inner sep=0.75pt]    {$v_{1}$};
\draw (163,78.7) node [anchor=north west][inner sep=0.75pt]    {$u_{1}$};
\draw (216.5,252.4) node [anchor=north west][inner sep=0.75pt]    {$v_{j}^{n_{0}}$};
\draw (214,123.7) node [anchor=north west][inner sep=0.75pt]    {$v_{0}$};
\draw (233,3.9) node [anchor=north west][inner sep=0.75pt]    {$u_{2}$};
\draw (442.4,206.1) node [anchor=north west][inner sep=0.75pt]    {$v$};
\draw (443,186.4) node [anchor=north west][inner sep=0.75pt]    {$u$};
\draw (442,166.8) node [anchor=north west][inner sep=0.75pt]    {$w$};
\draw (442.5,104.8) node [anchor=north west][inner sep=0.75pt]    {$v_{1}$};
\draw (390,78.7) node [anchor=north west][inner sep=0.75pt]    {$u_{1}$};
\draw (443.5,252.4) node [anchor=north west][inner sep=0.75pt]    {$v_{j}^{n_{0}}$};
\draw (441,123.7) node [anchor=north west][inner sep=0.75pt]    {$v_{0}$};
\draw (464,2.9) node [anchor=north west][inner sep=0.75pt]    {$u_{2}$};
\draw (213.5,65.4) node [anchor=north west][inner sep=0.75pt]    {$v_{3}$};
\draw (440,65.9) node [anchor=north west][inner sep=0.75pt]    {$v_{3}$};
\draw (213,21.9) node [anchor=north west][inner sep=0.75pt]    {$v_{2}$};
\draw (439.5,22.4) node [anchor=north west][inner sep=0.75pt]    {$v_{2}$};

\end{tikzpicture}
    \caption{Second part of the process of adding and removing paths from cycle $C$ - Case 2.}
    \label{fig7}
\end{figure}

Applying this process, we can extend the cycle $C$ to a family of double rays. For each infinite branch $L_j$, we construct two disjoint rays that are connected to the initial cycle $C$. Since these two rays are equivalent (they are infinitely connected through the same branch), the family of double rays we obtain forms an infinite cycle in $\vert G\vert$. Since this infinite cycle covers all the vertices of $A$, it is the desired cycle.

\end{proof}

\section*{Acknowledgments}
The first named author thanks the support of Fundação de Amparo à Pesquisa do Estado de São Paulo (FAPESP), being sponsored through grant number 2023/00595-6. The second named author acknowledges
the support of Conselho Nacional de Desenvolvimento Científico e Tecnológico (CNPq) through grant number 165761/2021-0. The third named author acknowledges the support of the Austrian Science Fund (FWF) through grant number [10.55776/I5930 and 10.55776/PAT5730424].

\Addresses


\begin{thebibliography}{00}



\bibitem{F-limit}
Leandro Aurichi, Paulo Magalhães Jr, and Luisa Seixas.\textit{Limits of cycles and cover
conjectures}. Submitted.

\bibitem{barát2023double}
János Barát, Andrzej Grzesik, Attila Jung, Zoltán Lóránt Nagy, and Dömötör Pálvölgyi.
\textit{The double Hall property and cycle covers in bipartite graphs}. Discrete Mathematics., 347(9):7, 2024. Id/No 114079.

\bibitem{cyclecocycle}
Henning Bruhn, Reinhard Diestel, and Maya Stein. \textit{Cycle-cocycle partitions and faithful cycle covers for locally finite graphs}. Journal Graph Theory, 50(2):150–161, 2005.

\bibitem{diestelbook}
Reinhard Diestel,
  \textit{Graph theory}, 
   Graduate Texts in Mathematics, volume 173 of Graduate Texts in Mathematics. Springer, Berlin, fifth edition, 2017.

\bibitem{ciclofundamental}
Reinhard Diestel and Daniela Kühn,
  \textit{On infinite cycles. I}, 
  Combinatorica. An International Journal on Combinatorics and the Theory of Computing, 24(1):69–89, 2004.  

\bibitem{restrições1}
Reinhard Diestel and Daniela Kühn,
  \textit{Topological paths, cycles and spanning trees in infinite graphs}, 
  European Journal of Combinatorics, 25(6):835–862, 2004.

\bibitem{Genest}  
François Genest. \textit{Circle graphs and the cycle double cover conjecture}. Discrete Mathematics, 309(11):3714–3725, 2009.

\bibitem{halin}
Rudolf Halin,
  \textit{Über unendliche Wege in Graphen}, 
  Annals of Mathematics, 157:125–137, 1964.  

\bibitem{HALL}
Philip Hall. \textit{On Representatives of Subsets}. Journal of the London Mathematics Society, 10(1):26–30, 1935.

\bibitem{surveycdcc}
François Jaeger. \textit{A survey of the cycle double cover conjecture}. In Cycles in graphs
(Burnaby, B.C., 1982), volume 115 of North-Holland Mathematics Studies, pages 1–12. North-
Holland, Amsterdam, 1985.

\bibitem{Jechbook}
Thomas Jech. \textit{Set theory}. Springer Monographs in Mathematics. Springer-Verlag,
Berlin, 2003. The third millennium edition, revised and expanded.

\bibitem{zbMATH07303525}
Alexandr Kostochka, Mikhail Lavrov, Ruth Luo, and Dara Zirlin. \textit{Conditions for a
bigraph to be super-cyclic}. The Electronic Journal of Combinatorics, 28(1):research paper p1.2, 19, 2021.

\bibitem{lavrov2023halltypeconditionpathcovers}  
Mikhail Lavrov and Jennifer Vandenbussche. \textit{A Hall-Type Condition for Path Covers
in Bipartite Graphs}. The Electronic Journal Combinatorics, 31(3):Paper No. 3.1, 2024.

\bibitem{Salia}
Nika Salia. \textit{Extremal problems for paths and cycles}. PhD thesis, Central European
University, 2021.

\bibitem{Soukup}
Lajos Soukup,
  \textit{Elementary submodels in infinite combinatorics}, Discrete Mathematics, 311(15):1585–1598, 2011. 

\end{thebibliography}
\end{document}